\newtheorem{theorem}{Theorem}
\newtheorem{lemma}[theorem]{Lemma}
\newtheorem{proposition}[theorem]{Proposition}
\newtheorem{definition}[theorem]{Definition}
\theoremstyle{remark}
\newtheorem*{remark}{Remark}
\newtheorem*{example}{Example}
\numberwithin{theorem}{section} \numberwithin{equation}{section}
\newcommand{\calO}{\mathcal{O}}
\newcommand{\C}{\mathbb{C}}
\newcommand{\F}{\mathbb{F}}
\newcommand{\Cl}{{\text {\rm Cl}}}
\newcommand{\Tr}{{\text {\rm Tr}}}
\newcommand{\Q}{\mathbb{Q}}
\newcommand{\Z}{\mathbb{Z}}
\newcommand{\SL}{{\text {\rm SL}}}
\newcommand{\GL}{{\text {\rm GL}}}
\newcommand{\textmod}{{\text {\rm mod}}}
\newcommand{\calP}{\mathcal{P}}
\newcommand{\lcm}{{\text {\rm lcm}}}
\newcommand{\add}{{\text {\rm add}}}
\newcommand{\sub}{{\text {\rm sub}}}
\newcommand{\Disc}{{\text {\rm Disc}}}
\newcommand{\Stab}{{\text {\rm Stab}}}
\newcommand{\sump}{\sideset{}{'}\sum}
\newcommand{\Res}{\textnormal{Res}}
\newcommand{\calS}{\mathcal{S}}
\newcommand{\Aut}{\textnormal{Aut}}
\newcommand{\ctt}{\textnormal{ct}}
\begin{document}
\title[Counting Cubic Fields]
{Secondary Terms in Counting Functions for Cubic Fields}
\author{Takashi Taniguchi}
\address{Department of Mathematics, Graduate School of Science,
Kobe University, 1-1, Rokkodai, Nada-ku, Kobe 657-8501, Japan}
\address{Department of Mathematics, Princeton University, Fine Hall,
Washington Road, Princeton, NJ 08540}
\email{tani@math.kobe-u.ac.jp}

\author{Frank Thorne}
\address{Department of Mathematics, Stanford University, Stanford, CA 94305}
\address{Department of Mathematics, University of South Carolina,
1523 Greene Street, Columbia, SC 29208}
\email{fthorne@math.stanford.edu}

\begin{abstract}
We prove the existence of secondary terms of order $X^{5/6}$ in the Davenport-Heilbronn theorems on cubic fields
and $3$-torsion in class groups of quadratic fields. For cubic fields this confirms a conjecture of Datskovsky-Wright
and Roberts. 
We also prove a variety of generalizations, including to arithmetic progressions, where we
discover a curious bias in the secondary term.

Roberts' conjecture has also been proved independently by Bhargava, Shankar, and Tsimerman. In contrast
to their work, our proof uses the analytic theory of zeta functions associated to the space of
binary cubic forms, developed by Shintani and Datskovsky-Wright.
\end{abstract}

%
%


\maketitle
\section{Introduction}
Let $N_3^{\pm}(X)$ count the number of cubic fields $K$ with $0 < \pm \Disc(K) < X$.
In \cite{R}, Roberts conjectured that
\begin{equation}\label{conj_rc}
N_3^{\pm} (X) = C^{\pm} \frac{1}{12 \zeta(3)} X + K^{\pm} 
\frac{4 \zeta(1/3)}{5 \Gamma(2/3)^3 \zeta(5/3)} X^{5/6} + o(X^{5/6}),
\end{equation}
where $C^- = 3$, $C^+ = 1$, $K^- = \sqrt{3}$, and
$K^+ = 1$. This conjecture also appeared implicitly in an earlier paper of Datskovsky and Wright \cite{DW3}.
It was based on a combination of numerical and theoretical evidence, the latter
of which will be described in due course.

In this paper we will prove the conjecture, with an additional power savings in the error term:
\begin{theorem}\label{thm_rc}
Roberts' conjecture is true. Indeed, we have
\begin{equation}\label{eqn_main_thm}
N_3^{\pm} (X) = C^{\pm} \frac{1}{12 \zeta(3)} X + K^{\pm} 
\frac{4 \zeta(1/3)}{5 \Gamma(2/3)^3 \zeta(5/3)} X^{5/6} + O(X^{7/9 + \epsilon}).
\end{equation}
\end{theorem}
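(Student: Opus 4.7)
The plan is to prove Theorem \ref{thm_rc} using the analytic theory of the Shintani zeta function associated to the prehomogeneous vector space of binary cubic forms. Let $V_\Z$ denote the lattice of integral binary cubic forms, acted on by $\Gamma = \GL_2(\Z)$, and for each sign define
$$\xi^\pm(s) = \sum_{x \in \Gamma \backslash V_\Z^\pm} \frac{1}{|\Stab(x)|} |\Disc(x)|^{-s},$$
where the sum runs over orbits with discriminant of the appropriate sign. By work of Shintani and Datskovsky-Wright, $\xi^\pm(s)$ admits meromorphic continuation to $\C$ with simple poles only at $s=1$ and $s=5/6$, whose residues are explicit, and satisfies a functional equation relating $s$ to $1-s$.

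By the Delone-Faddeev correspondence, $\Gamma$-orbits in $V_\Z$ parametrize cubic rings, with cubic fields of given sign of discriminant corresponding precisely to orbits of maximal orders (with appropriate weighting by automorphisms). To isolate maximal orders, I would apply a sieve: maximality at $p$ is a $\GL_2(\Z_p)$-invariant condition, so by inclusion-exclusion
$$N_3^\pm(X) = \sum_q \mu(q) \cdot N^\pm(X; \mathcal{L}_q),$$
where $N^\pm(X; \mathcal{L}_q)$ counts weighted $\Gamma$-orbits of forms with $|\Disc| < X$ that are non-maximal at every prime dividing $q$. Each $N^\pm(X; \mathcal{L}_q)$ is the summatory function of coefficients of a modified Shintani zeta function $\xi^\pm(s; \mathcal{L}_q)$ obtained by imposing $\GL_2(\Z_p)$-invariant local conditions at the primes $p \mid q$, and these modified zeta functions inherit the analytic continuation, pole structure, and functional equation of $\xi^\pm(s)$, with local density factors explicitly computable.

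Next, I would apply Perron's formula to each $N^\pm(X; \mathcal{L}_q)$ and shift the contour to the left of $s = 5/6$, picking up residues at $s = 1$ and $s = 5/6$. Summing in $q$ (truncated at some parameter $Q$), the residues at $s=1$ assemble into the main term $C^{\pm} X/(12\zeta(3))$, with the $\zeta(3)$ emerging from the local densities. Analogously, the residues at $s = 5/6$ assemble into the secondary term, with the $\zeta(5/3)$ and $\zeta(1/3)$ arising from the modified local factors and the functional equation.

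The principal obstacle is obtaining the error term $O(X^{7/9+\epsilon})$. This requires quantitative convexity-type bounds, uniform in $q$, for $\xi^\pm(s; \mathcal{L}_q)$ on vertical lines to the left of $\re(s) = 5/6$: the functional equation combined with a Phragm\'en-Lindel\"of interpolation yields growth polynomial in $|\im(s)|$ and in the conductor of $\mathcal{L}_q$, but achieving the right exponent demands careful control of the local factors uniformly in $q$. Separately, the tail of the sieve (contributions from forms non-maximal at primes $p > Q^{1/2}$) must be controlled by a direct Davenport-style geometry-of-numbers estimate. Balancing the sieve truncation level $Q$ against the depth of the contour shift, so that both the shifted integral and the large-$q$ tail are $O(X^{7/9+\epsilon})$, should then yield \eqref{eqn_main_thm}.
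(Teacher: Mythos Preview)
Your overall architecture matches the paper's: Delone--Faddeev, the inclusion--exclusion sieve over $q$-nonmaximal Shintani zeta functions $\xi^\pm_q(s)$, truncation at $Q$ with tail $\ll X/Q^{1-\epsilon}$ (indeed via a BBP-type estimate), contour shifting past $s=1$ and $s=5/6$, and balancing $Q$ against the contour error. The main and secondary terms assemble exactly as you describe.

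The gap is in the error analysis. You propose to bound $\xi^\pm_q(s)$ on vertical lines by Phragm\'en--Lindel\"of convexity, with ``careful control of the local factors uniformly in $q$.'' The paper's authors remark explicitly that their first attempts along these lines produced error terms \emph{larger} than $X^{5/6}$; a convexity bound alone, with the conductor of $\xi^\pm_q$ growing like $q^8$, does not obviously beat the secondary term. The decisive input you are missing is an explicit pointwise analysis of the \emph{cubic Gauss sums} $\widehat{\Phi}_q(x)$ appearing in the dual zeta function $\widehat{\xi}^\pm_q(s)$: these vanish unless the associated cubic ring is nonmaximal or totally ramified at each $p\mid q$, and on the surviving orbits they are small (of size $\ll p^{-3}$ or $p^{-5}$ depending on type). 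This yields the uniform bound $\sum_{\mu_n<X}|b_q(\mu_n)|\ll q^{1+\epsilon}X$ on dual coefficients, which is what makes the choice $Q=X^{2/9}$ and the exponent $7/9$ work.

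A second point: rather than naive Perron plus a vertical-line bound, the paper uses the Chandrasekharan--Narasimhan device of smoothing by $(X-n)^\rho$ and then undoing the smoothing via a $\rho$-th finite difference. Crucially, the smoothing is applied to the \emph{entire} sieve $\sum_{q\le Q}\mu(q)\sum_n a_q(n)$ at once, the smoothed contribution of each $q$ is estimated via the functional equation and the dual-coefficient bound above, and the unsmoothing is performed only once at the end. Unsmoothing each $q$ separately would cost an extra factor of $Q$ in the error. Your sketch does not distinguish these two procedures, and the naive one would again lose the secondary term.
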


The main term is due to Davenport and Heilbronn \cite{DH}, and \eqref{eqn_main_thm} improves upon a result of Belabas, Bhargava, and Pomerance \cite{BBP}, who 
obtained an error term of $O(X^{7/8 + \epsilon})$. Although their methods were not 
designed to extract
the secondary term in \eqref{eqn_main_thm}, our approach nevertheless 
owes a great deal to theirs.

\begin{remark} Roberts' conjecture has also been proved, with an error term of $O(X^{13/16 + \epsilon})$, 
in recent and independent work of Bhargava, Shankar, and Tsimerman \cite{BST}. Their proof also allows
local specifications, as in our Theorem \ref{thm_rc_extended}.

\end{remark}
Our methods extend to the related problem of counting 3-torsion in quadratic fields. For any quadratic field
with discriminant $D$, let $\Cl_3(D)$ denote the 3-torsion subgroup of the ideal class group
$\Cl(\Q(\sqrt{D}))$. We will prove the following result.
\begin{theorem}\label{thm_rc_torsion}
We have
\begin{equation}\label{eqn_torsion1}
\sum_{0 < \pm D < X} \# \Cl_3(D) = \frac{3 + C^{\pm}}{\pi^2} X + K^{\pm}
\frac{8 \zeta(1/3)}{5 \Gamma(2/3)^3} \prod_p \Bigg(1 - \frac{p^{1/3} + 1}{p (p + 1)} \Bigg)
 X^{5/6} + O(X^{18/23 + \epsilon}),
\end{equation}
and
where the sum ranges over fundamental discriminants $D$, the product is over all primes, and the constants are as before.
\end{theorem}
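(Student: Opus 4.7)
The plan is to reduce the quadratic $3$-torsion count to a count of cubic fields with prescribed local behavior at every prime, and then to apply a uniform version of Theorem \ref{thm_rc} together with a sieve. By the Scholz--Hasse class field theory correspondence, for any fundamental discriminant $D$ the number of cubic fields of discriminant exactly $D$ equals $(\#\Cl_3(D)-1)/2$, so
\[
\sum_{0 < \pm D < X} \#\Cl_3(D) \;=\; \#\{0 < \pm D < X : D \text{ fundamental}\} \;+\; 2\,\#\{K : 0 < \pm\Disc(K) < X,\ \Disc(K)\text{ fundamental}\}.
\]
The first sum contributes $3X/\pi^2 + O(X^{1/2})$, accounting for the ``$3$'' in the coefficient $3+C^{\pm}$ of \eqref{eqn_torsion1}. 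The real content of the theorem is therefore the existence of a main term and an $X^{5/6}$ secondary term for the count of cubic fields of fundamental discriminant.

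The condition ``$\Disc(K)$ is fundamental'' is local: at each odd prime $p$ it reads $v_p(\Disc(K))\leq 1$, with the analogous condition at $2$. I would invoke Theorem \ref{thm_rc_extended}, the local-condition version of Theorem \ref{thm_rc}, applied with this specific constraint at each prime in turn. The relative density at $p$ can be read off from the mass formula on the local orbit space of binary cubic forms; assembled into Euler products, these densities reproduce the coefficient $C^{\pm}/\pi^2$ for the main term (using $\zeta(3)^{-1} = \prod_p(1-p^{-3})$) and the factor $\prod_p(1-(p^{1/3}+1)/(p(p+1)))$ for the secondary term. The remaining constant $8\zeta(1/3)/(5\Gamma(2/3)^3)$ assembles from the archimedean factor $K^{\pm}$ and the cancellation of the $\zeta(5/3)^{-1}$ that appears in the cubic-field secondary term against the Euler product evaluated at the second pole $s = 5/6$.

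Since infinitely many primes must be handled, I would truncate the local condition at $p \leq Y$ and sieve. The analytic contribution, driven by Theorem \ref{thm_rc_extended} with its $O(X^{7/9+\epsilon})$ error tracked uniformly in the modulus, has shape $Y^{A} X^{7/9+\epsilon}$ for an explicit constant $A$; the tail contribution from primes $p > Y$ is controlled via the uniform estimate $\#\{K : p^2 \mid \Disc(K),\ \pm\Disc(K) < X\} \ll X/p^2$ (as in \cite{BBP}), giving $O(X/Y)$. Optimizing $Y$ balances the two error contributions and yields the final exponent $18/23+\epsilon$.

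The principal obstacle is precisely this sieve step. The weaker exponent $18/23$, as compared with the $7/9$ of Theorem \ref{thm_rc}, is forced by the joint cost of tracking modulus-dependence in the error of Theorem \ref{thm_rc_extended} and the unavoidable large-prime loss $X/Y$ from cubic fields whose discriminant is divisible by $p^2$ for $p$ beyond the sieving range. Any sharpening of either ingredient would translate directly into a better exponent here, which is why I expect the sieve optimization, rather than the analytic input, to be where the argument loses ground.
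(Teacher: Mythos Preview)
Your reduction via the Hasse--Scholz correspondence is exactly right, and the identification of the main and secondary constants would proceed as you describe. However, there is a genuine gap in the error analysis: using Theorem~\ref{thm_rc_extended} as a black box cannot yield an error below $X^{5/6}$, let alone $X^{18/23}$. The error term in Theorem~\ref{thm_rc_extended} is $O\big(X^{7/9+\epsilon}\prod_p p^{8e_p/9}\big)$ with $e_p=2$ for the specification ``totally ramified at $p$'', so in an inclusion--exclusion over squarefree $q\le Q$ the analytic error accumulates as $X^{7/9+\epsilon}\sum_{q\le Q}q^{16/9}\asymp X^{7/9+\epsilon}Q^{25/9}$. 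Balancing this against the tail $X/Q^{1-\epsilon}$ gives $Q=X^{1/17}$ and a final error of $X^{16/17+\epsilon}$, which is far larger than the secondary term. (If instead you impose the condition simultaneously at all $p\le Y$, the conductor $\prod_{p\le Y}p^2$ grows exponentially in $Y$, and the situation is even worse.) The paper makes exactly this point in the remark at the end of Section~\ref{subsec_local_torsion}: one ``could formally derive Theorem~\ref{thm_rc_torsion} as a consequence of Theorem~\ref{thm_rc_extended} \dots\ [but] this would not treat the error terms in an acceptable manner.''

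What the paper does instead is to build the condition into the zeta function itself. Rather than sieving on local specifications, it replaces the maximality set $U_p$ by the smaller set $V_p$ (maximal \emph{and} not totally ramified), defines new $q$-nonmaximal Shintani zeta functions for this modified condition, and reruns the entire Landau--Chandrasekharan--Narasimhan contour argument of Section~\ref{sec_proof}. The crux is then a fresh analysis of the cubic Gauss sums $\widehat{\Phi}_q$ attached to $V_p$ (Section~\ref{section_torsion_bounds}): the critical support of $\widehat{\Phi}_p$ shifts from $p^2\mid\Disc$ with value $\asymp p^{-5}$ (Lemma~\ref{lem_gauss_sum}) to $p^3\mid\Disc$ with value $\asymp p^{-4}$ (Lemma~\ref{lem_gauss_sum2}), which weakens the dual partial-sum bound (Theorem~\ref{thm_trivial_bound_2} versus Theorem~\ref{thm_trivial_bound}) and forces the optimal choice $Q=X^{5/23}$ in place of $X^{2/9}$. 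That is the true origin of the exponent $18/23$; it comes from the Gauss-sum analysis inside the contour integration, not from an external sieve loss of the kind you sketch.
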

As with \eqref{eqn_main_thm}, the main term is due to Davenport and Heilbronn \cite{DH}, and an error term of $O(X^{7/8 + \epsilon})$ 
was proved by Belabas, Bhargava, and Pomerance \cite{BBP}.
Our error term is slightly higher than that in \eqref{eqn_main_thm} due to an additional technical complication which appears in the proof.

\begin{remark} 
Extensive 
computational results for the 3-parts of class groups appear in \cite{JRW, bel}. For smaller $X$ it is now practical to check 
\eqref{eqn_torsion1}
numerically using PARI/GP \cite{pari}. For example,
for $D > 0$ and $X = 10^6$, the left side of \eqref{eqn_torsion1} is $381071$, and the main terms on the right sum to
$381337.24\cdots$. For $D < 0$ these values are $566398$ and $566448.83\cdots$ respectively.

\end{remark}

As conjectured by Roberts, our results also extend to counting problems where various local restrictions are imposed.
This is perhaps most interesting in the case of counting fields. Let $\mathcal{S} = (\mathcal{S}_p)$
be a finite set of {\itshape local specifications}. In particular, we may require that $K$ be
inert, partially ramified, totally ramified, partially split,
or totally split at $p$. More generally, we may specify the $p$-adic completion $K_p$.

We will prove the following quantitative version of Roberts' extended conjecture:
\begin{theorem}\label{thm_rc_extended}
With the notation above, the number of cubic fields $K$ satisfying $\mathcal{S}$ with $0 < \pm \Disc(K) < X$ is 
\begin{equation}\label{eqn_main_thm_extended}
N_3^{\pm} (X, \mathcal{S}) = C^{\pm} (\mathcal{S}) \frac{1}{12 \zeta(3)} X + K^{\pm}(\mathcal{S})
\frac{4 \zeta(1/3)}{5 \Gamma(2/3)^3 \zeta(5/3)} X^{5/6} + O\Big( X^{7/9 + \epsilon} \prod_p p^{8 e_p/9} \Big),
\end{equation}
where $e_p = 0$ if there is no specification at $p$, $e_p = 1$ if we only count fields unramified at $p$, and $e_p = 2$ otherwise,
and the constants $C^{\pm}(\mathcal{S})$ and $K^{\pm}(\mathcal{S})$ are computed explicitly in Section \ref{sec_generalizations}.
\end{theorem}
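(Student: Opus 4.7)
The plan is to adapt the proof of Theorem \ref{thm_rc} to accommodate the local specifications $\mathcal{S}$. By the Delone--Faddeev parametrization, cubic rings correspond to $\GL_2(\Z)$-orbits on the lattice of integral binary cubic forms, and each local specification $\mathcal{S}_p$ carves out a $\GL_2(\Z_p)$-invariant subset defined by congruences modulo $p^{e_p}$. The fields enumerated by $N_3^{\pm}(X, \mathcal{S})$ therefore correspond to $\GL_2(\Z)$-orbits in a sublattice $L_{\mathcal{S}} \subset V(\Z)$ whose index is a product of local factors bounded by $p^{O(e_p)}$.

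The next step is to introduce the Shintani zeta function attached to $L_{\mathcal{S}}$ and its dual lattice. The analytic properties needed---meromorphic continuation, a functional equation, and poles at $s=1$ and $s = 5/6$ (with the double pole at $s=1$ handled as in the Datskovsky--Wright analysis used for Theorem \ref{thm_rc})---are preserved under passage to a finite-index sublattice, since they reflect the geometry of the prehomogeneous vector space $(\GL_2, \Sym^3)$. The residues at these poles factor as products of local densities, with the factors at primes $p \in \mathcal{S}$ replaced by integrals over $\mathcal{S}_p$; evaluating these factors yields the explicit constants $C^{\pm}(\mathcal{S})$ and $K^{\pm}(\mathcal{S})$ described in Section \ref{sec_generalizations}.

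The asymptotic is then extracted by the same smoothed Perron formula and contour-shift argument used in Theorem \ref{thm_rc}, combined with a Belabas--Bhargava--Pomerance style sieve to restrict to maximal orders. Maximality at primes outside $\mathcal{S}$ is handled by the usual sieve; at primes in $\mathcal{S}$ it is absorbed directly into the definition of $\mathcal{S}_p$, which is harmless because the $p$-adic condition on the form already determines the $p$-part of the index $[\calO_K : \calO]$.

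The hard part will be making every estimate uniform in the sublattice $L_{\mathcal{S}}$. One needs bounds on the modified Shintani zeta function along the line $\re(s) = 7/9 + \epsilon$ with explicit polynomial dependence on the conductor of $L_{\mathcal{S}}$, and the sieve truncation parameter must be optimized against this bound. Verifying that all the resulting contributions combine to produce the factor $\prod_p p^{8e_p/9}$ in the error term amounts to carrying out a congruence-uniform version of the proof of Theorem \ref{thm_rc}; this uniformity, rather than any new analytic input, is where the bulk of the technical work lies.
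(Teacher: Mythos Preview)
Your outline is correct and follows the same route as the paper, but two points of imprecision are worth flagging, and one step is missing.

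First, the set cut out by $\calS$ is not a sublattice of $V_{\Z}$: it is a $\GL_2(\Z/N\Z)$-invariant union of congruence classes modulo $N := \prod_p p^{e_p}$. Accordingly, one does not pass to a ``Shintani zeta function of a sublattice and its dual lattice''; rather, one inserts the characteristic function $\Phi_{\calS}$ into the definition of $\xi^{\pm}_{\calS,q}(s)$, and the functional equation then places the finite Fourier transform $\widehat{\Phi}_{\calS}$ (on $V_{\Z/N\Z}$) into the dual zeta function.

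Second, the paper does not prove bounds for $\xi^{\pm}_{\calS,q}$ on a vertical line; it reruns the Chandrasekharan--Narasimhan argument of Section \ref{sec_proof} verbatim. The only new ingredient is the extra factor $\widehat{\Phi}_{\calS}(q^{-2}x)$ in the dual series \eqref{def_shintani_dual_spec}, which is handled by the trivial bound $|\widehat{\Phi}_{\calS}| \le 1$. This replaces $\mu_n$ by $\mu_n/N^4$ and hence multiplies the partial-sum bounds of Proposition \ref{prop_high_middle_bound} by $N^4$; feeding this through gives $Q^{7/2+\epsilon}N^4 + X^{1+\epsilon}/Q$ in place of \eqref{eqn_choose_Q}, and the choice $Q = X^{2/9}N^{-8/9}$ immediately produces the error term $X^{7/9+\epsilon}N^{8/9}$. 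So the exponent $8/9$ is not the output of a delicate uniformity analysis but falls straight out of this one trivial bound and the existing optimization.

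Finally, a step you omit: one must separately verify that the contribution of \emph{reducible} maximal rings $\Z \times \calO_F$ (for quadratic $F$) satisfying $\calS$ matches the $\mathscr{B}(\calS)$ portion of the residue at $s=1$, so that after subtracting it (as in Proposition \ref{prop_shintani_reduction}) the main term takes the stated form. This requires a short count of quadratic discriminants in arithmetic progressions.
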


We obtain a similar generalization of Theorem \ref{thm_rc_torsion}. In this case, we may restrict our sum to $D$ for which
finitely many primes $p$ are inert, split, or ramified. In the ramified case we may also specify the completion of
$\Q(\sqrt{D})$ at $p$. We will prove the following result:
\begin{theorem}\label{thm_rc_torsion2}
With the notation above, we have
\begin{equation}\label{eqn_torsion3}
\sump_{0 < \pm D < X} \# \Cl_3(D) = \frac{3 + C^{\pm}}{\pi^2} C'(\mathcal{S}) X +  K'^{\pm}(\mathcal{S})
\frac{8 \zeta(1/3)}{5 \Gamma(2/3)^3} \prod_p \Bigg(1 - \frac{p^{1/3} + 1}{p (p + 1)} \Bigg) X^{5/6} + O\Big(X^{18/23 + \epsilon} \prod_p 
p^{20 e_p/23} \Big),
\end{equation}
where the sums are restricted to discriminants meeting the conditions specified by $\mathcal{S}$, and the 
constants $C'(\mathcal{S})$ and $K'^{\pm}(\mathcal{S})$ are computed explicitly in Section \ref{sec_generalizations}.
\end{theorem}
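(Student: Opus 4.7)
The plan is to adapt the proof of Theorem \ref{thm_rc_torsion} by imposing the local specifications $\mathcal{S}$ directly at the level of the Shintani zeta function. The $3$-torsion count underlying Theorem \ref{thm_rc_torsion} is expressed (via a Davenport--Heilbronn style correspondence) as a weighted sum over $\GL_2(\Z)$-orbits of integral binary cubic forms; since the conditions in $\mathcal{S}$ are $\GL_2(\Z_p)$-invariant at each relevant $p$, they can be folded into the local weights without disturbing the analytic framework.

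First I would translate each $\mathcal{S}_p$ into a $\GL_2(\Z_p)$-invariant characteristic function $\mathbf{1}_{\Omega_p}$ on the space of binary cubic forms over $\Z_p$, via the Delone--Faddeev correspondence between cubic rings and $\GL_2(\Z_p)$-orbits: the conditions ``$p$ inert'', ``$p$ split'', and ``$p$ ramified with prescribed completion'' pull back to finite unions of $\GL_2(\Z_p)$-orbits detectable modulo $p^{e_p}$, which is the origin of the three-valued $e_p$. The twisted Shintani zeta function $\xi^{\pm}(s;\mathcal{S})$ obtained by inserting $\prod_p \mathbf{1}_{\Omega_p}$ into the orbital sum has the same analytic continuation and pole structure as the untwisted version, with its Euler factors at primes in $\mathcal{S}$ replaced by the local densities attached to $\Omega_p$. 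Rerunning the contour argument from Theorem \ref{thm_rc_torsion} with $\xi^{\pm}(s;\mathcal{S})$ in place of $\xi^{\pm}(s)$, the residues at $s=1$ and $s=5/6$ factor as Euler products: at primes $p \notin \mathcal{S}$ they reproduce the unrestricted densities of Theorem \ref{thm_rc_torsion}, while at primes $p \in \mathcal{S}$ they are finite explicit sums over $\GL_2(\Z/p^{e_p}\Z)$-orbits. Assembling these factors yields the constants $C'(\mathcal{S})$ and $K'^{\pm}(\mathcal{S})$ tabulated in Section \ref{sec_generalizations}.

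The main obstacle, as with Theorem \ref{thm_rc_extended}, is tracking the level $N=\prod_p p^{e_p}$ through the error analysis. In the proof of Theorem \ref{thm_rc_torsion} the exponent $18/23$ arises from optimizing a contour shift against polynomial-growth bounds for $\xi^{\pm}(s;\mathcal{S})$ on vertical lines in the strip $5/6 < \re(s) < 1$, combined with bounds derived from the functional equation of the dual zeta function. Twisting by $\prod_p \mathbf{1}_{\Omega_p}$ inflates these bounds by a power of $N$, both from the $N$-dependence of the functional equation and from the smoothing loss inherent in isolating fixed congruence classes; re-optimizing the same balance produces the factor $\prod_p p^{20 e_p/23}$, with exponent $20/23$ parallel to the unsieved exponent $18/23$. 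Making every implicit constant in the proof of Theorem \ref{thm_rc_torsion} explicit in its level-dependence, particularly in the estimates near $\re(s)=5/6$, is expected to be the most technically demanding step, and is the reason the $N$-exponent here is worse than the $8/9$ appearing in Theorem \ref{thm_rc_extended}.
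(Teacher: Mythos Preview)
Your overall strategy is correct and matches the paper's: the proof is indeed a combination of those of Theorems \ref{thm_rc_torsion} and \ref{thm_rc_extended}, inserting the characteristic function $\Phi_{\calS}$ of the local specifications into the $q$-nonmaximal Shintani zeta functions (those built from the sets $V_p$ rather than $U_p$) and rerunning the sieve and contour argument. The residue computations and the definitions of $C'(\calS)$, $K'^{\pm}(\calS)$ proceed exactly as you describe.

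Your account of the error analysis, however, is imprecise in a way that matters. The paper does not use growth bounds on vertical lines in the strip $5/6 < \re(s) < 1$; it uses the Chandrasekharan--Narasimhan smoothing of Section \ref{sec_proof}, shifting the contour past both poles to $\re(s) = 1 - c < 0$ and then invoking the functional equation to reduce everything to the dual coefficients $b_q(\mu_n)$. The $N$-dependence enters in a single, simple way: the trivial bound $|\widehat{\Phi}_{\calS}(x)| \leq 1$ means that the dual series is bounded coefficientwise by $\sum |b_q(\mu_n)| (\mu_n/N^4)^{-s}$, so the bounds of Proposition \ref{prop_high_middle_bound2} acquire a factor of $N^4$ (with the cutoffs $q^{-3}$, $q^{-2}$ still applying to $\mu_n$, not $\mu_n/N^4$). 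Feeding this into the split-range argument of Section \ref{subsec_torsion} yields, in place of \eqref{eqn_chooseQ_1a},
\[
Q^{15/8 + \epsilon} X^{3/8} N^{5/2} + Q^{7/2 + \epsilon} N^4 + X^{1 + \epsilon}/Q,
\]
and the choice $Q = X^{5/23} N^{-20/23}$ gives the stated error. So the exponent $20/23$ is not a sign of additional level-aspect difficulty beyond Theorem \ref{thm_rc_extended}; it simply reflects that the $3$-torsion baseline already has the three-term shape \eqref{eqn_chooseQ_1a} (coming from the more delicate dual bounds of Section \ref{section_torsion_bounds}), and $N$ enters those terms with different weights.
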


Finally, our results allow us to count discriminants in arithmetic progressions. Here we encounter a curious phenomenon.
Consider the following table of discriminants of cubic fields $K$ with
$0 < \Disc(K) < 2 \cdot 10^6$, arranged by their residue class modulo 7:
\begin{center}
\begin{tabular}{c | c }
Discriminant modulo 7 & \textnormal{Count} \\ \hline
0 & 15330 \\ \hline
1 & 17229 \\ 
2 & 14327 \\ 
3 & 15323 \\ 
4 & 17027 \\ 
5 & 18058 \\ 
6 & 15150 \\ 
\end{tabular}
\end{center}
This data shows a striking lack of equidistribution, and as related experiments confirm, this is not a fluke. The primary
term in the counting function is the same for each residue class other than 0, but 
we will
prove that the secondary term in the counting function is different for each residue class modulo 7.
More generally, when there exist cubic
Dirichlet characters modulo $m$ (equivalently, when $m$ is divisible by 9 or any prime $\equiv 1 \ (\textmod \ 6)$), the secondary term depends on these characters in a subtle way, and we obtain biases in the distribution of fields in progressions modulo $m$. 

Our general result (Theorem \ref{thm_technical_ap}) also allows local specifications and is rather complicated to state; the following is a special case.

\begin{theorem}\label{thm_progressions}
Suppose that $(6a, m) = 1$. Then the number of cubic fields $K$ with $0 < \pm \Disc(K) < X$ and
$\Disc(K) \equiv a \ (\textmod \ m)$ is
\begin{equation}\label{eqn_twisted}
N_3^{\pm}(X; m, a) = 
\frac{C^{\pm}}{12 \zeta(3) m} \prod_{p | m} \frac{1}{1 - p^{-3}} X
+ \\ 
K_1(m, a) \frac{4K^\pm}{5\Gamma(2/3)^3} 
 X^{5/6} + O(X^{7/9 + \epsilon} m^{8/9}),
\end{equation}
where
\begin{equation}\label{eqn_thm_k1}
K_1(m, a) := 
\frac{1}{m}
\prod_{p | m} \frac{1}{1 - p^{-2}}
\sump_{\chi^6 = 1} 
\overline{\chi}(a) \frac{L(1/3, \chi^{-2})}{L(5/3, \chi^2)}
\prod_{\substack{{p | m} \\ p \nmid \textnormal{cond}(\chi)}}
 \frac{1 - \chi(p)^{-2} p^{-4/3}}{1 - \chi(p)^2 p^{-5/3}}
\prod_{\substack{{p | m} \\ p | \textnormal{cond}(\chi)}}
 \frac{\tau_p(\chi_p^2)^3}{p^2}.
 \end{equation}
Here $\tau_p(\chi_p^2) = \sum_{t \in (\Z / p \Z)^{\times}} \chi_p^2(t) e^{2 \pi i t/p}$, and
the sum is over primitive characters $\chi$ to moduli dividing $m$ (including the trivial character modulo $1$), 
such that if we write $\chi = \prod_{p | \textnormal{cond}(\chi)} \chi_p$ with each $\chi_p$ of conductor $p$, then each $\chi_p$ has exact order $6$.
\end{theorem}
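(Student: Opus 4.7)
The strategy is to derive Theorem \ref{thm_progressions} as a character-twisted consequence of Theorem \ref{thm_rc_extended}. Since $(6a,m)=1$, orthogonality of Dirichlet characters gives
\[
N_3^{\pm}(X;m,a) = \frac{1}{\phi(m)} \sum_{\chi \bmod m} \overline{\chi}(a)
\sum_{\substack{K:\, 0<\pm\Disc(K)<X \\ (\Disc(K),m)=1}} \chi(\Disc(K)),
\]
and decomposing $\chi = \prod_{p\mid \textnormal{cond}(\chi)}\chi_p$ together with the fact that $\Disc(K)\bmod p^k$ depends only on the completion $K_p$ realizes each inner sum as a finite linear combination of counts of cubic fields with prescribed $p$-adic completions at the primes $p\mid m$. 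Theorem \ref{thm_rc_extended} applies to each such specified count.

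Assembling the trivial-character contribution produces the main term: summing the local densities $C^{\pm}(\mathcal{S}_p)$ from Theorem \ref{thm_rc_extended} over all cubic \'etale $\Q_p$-algebras with discriminant coprime to $p$ yields the factor $(1-p^{-3})^{-1}$ for each $p\mid m$, which combines with $1/\phi(m)$ and the density of residue classes coprime to $m$ to give the leading term of \eqref{eqn_twisted}. For a nontrivial $\chi$ the secondary term from Theorem \ref{thm_rc_extended} contributes $\overline\chi(a)$ times $\prod_p \lambda_p(\chi_p)$, where
\[
\lambda_p(\chi_p) = \sum_{K_p} \chi_p(\Disc(K_p))\,K^{\pm}_p(K_p),
\]
$K^{\pm}_p(K_p)$ denoting the local density appearing in $K^{\pm}(\mathcal{S})$ and the sum running over isomorphism classes of cubic \'etale $\Q_p$-algebras. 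For $p\nmid \textnormal{cond}(\chi)$ an elementary summation over the unramified splitting types recovers $(1-\chi(p)^{-2}p^{-4/3})/(1-\chi(p)^2 p^{-5/3})$, producing one Euler factor of $L(1/3,\chi^{-2})/L(5/3,\chi^2)$.

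For $p\mid \textnormal{cond}(\chi)$ the evaluation of $\lambda_p(\chi_p)$ is the key step. Using the explicit parametrization of ramified cubic extensions of $\Q_p$ via binary cubic forms, together with the residue formula for the secondary coefficient of the Shintani zeta function (which naturally carries a $\chi^{-2}$ twist from the functional equation), the sum over totally and partially ramified completions reduces to an expression of the form $\bigl(\sum_{t\in(\Z/p\Z)^{\times}}\chi_p^2(t)e^{2\pi i t/p}\bigr)^3 / p^2$. The vanishing mechanism is then automatic: unless $\chi_p^2$ is a primitive cubic character---equivalently, unless $\chi_p$ has exact order $6$---the resulting Gauss sum either vanishes or lacks the cube structure inherited from the underlying cubic symmetry of the prehomogeneous vector space. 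This is precisely the restriction imposed in \eqref{eqn_thm_k1}. The error term in \eqref{eqn_twisted} follows from the uniform error in Theorem \ref{thm_rc_extended} with $e_p=2$ at each $p\mid m$, summed over $\phi(m)$ characters.

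The main obstacle is the local calculation at ramified primes: identifying the weighted sum of $\chi_p(\Disc(K_p))$ over cubic \'etale $\Q_p$-algebras with the cubic Gauss sum $\tau_p(\chi_p^2)^3$ and proving its vanishing when $\chi_p$ is not of exact order $6$. This demands a careful tracking of Wright's $p$-adic orbital decomposition of binary cubic forms and of how the twist by $\chi^{-2}$ arising from the functional equation selects the cubic character data. Once this local identity is in hand, the remaining argument is routine assembly of Euler products into the $L$-function ratio $L(1/3,\chi^{-2})/L(5/3,\chi^2)$ and bookkeeping of the tame factors at primes $p\mid m$ with $p\nmid \textnormal{cond}(\chi)$.
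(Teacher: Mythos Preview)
Your overall architecture---orthogonality, then local specifications at the primes dividing $m$---matches the paper's, but the key analytic input is misplaced, and the approach as written cannot produce the correct secondary term.

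The gap is this: you propose to obtain the character-twisted counts by applying Theorem \ref{thm_rc_extended} to each individual local specification $R_p$ and then summing with weights $\chi_p(\Disc(R_p))$. But the secondary term in Theorem \ref{thm_rc_extended} is always a rational multiple of $\zeta(1/3)/\zeta(5/3)$, so any finite linear combination of such counts has secondary term proportional to $\zeta(1/3)/\zeta(5/3)$. The target formula \eqref{eqn_thm_k1}, however, involves $L(1/3,\chi^{-2})/L(5/3,\chi^2)$ for nontrivial $\chi$. These $L$-values cannot be manufactured from the untwisted zeta values by a finite local sum; they arise only from the residue at $s=5/6$ of the \emph{twisted} orbital $L$-function \eqref{eqn_def_shintani_orbital}. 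The paper therefore does not deduce Theorem \ref{thm_progressions} from Theorem \ref{thm_rc_extended}, but rather from the more general Theorem \ref{thm_technical_ap}, whose residue formula \eqref{eqn_56_res_ap} carries $L(1/3,\chi)$ from the start.

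Your local calculation is also off in two respects. First, since $(a,m)=1$ the relevant sum over $R_p$ runs over the \emph{unramified} cubic \'etale algebras (totally split, partially split, inert), not the ramified ones; the Gauss sum $\tau_p(\chi_p^2)$ enters because these three unramified types carry twisted densities $\tau_p(\chi_p^2)/6p^2$, $-\tau_p(\chi_p^2)/2p^2$, $\tau_p(\chi_p^2)/3p^2$ in the table following \eqref{def_kp_twisted}. Second, the vanishing for characters with $\chi_p$ not of exact order $6$ is not a Gauss-sum degeneration but a direct cancellation in \eqref{eqn_ma}: writing $\chi_p=\psi_p\phi_p$ with $\psi_p$ cubic and $\phi_p$ quadratic, one checks from the $\phi_p$-column of the tables that $\sum_{R_p}\phi_p(\Disc(R_p))K(R_p,\psi)=0$ whenever exactly one of $\psi_p,\phi_p$ is nontrivial.
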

We illustrate our result with numerical data for $m = 5$ and $m = 7$. We consider the number 
of fields $K$ with $0 < \Disc(K) < 2 \cdot 10^6$ and $\Disc(K) \equiv a \ (\textmod \ m)$, and for each modulus we list 
the sum of the two main terms of \eqref{eqn_twisted} (after rounding) as well as the actual numerical data. For $a = 0$, the two main terms come from
\eqref{eqn_main_thm_extended}
instead of \eqref{eqn_twisted}.

\begin{center}
\begin{tabular}{l | c | c | c | c | c  }
Discriminant modulo 5 & 0 & 1 & 2 & 3 & 4  \\ \hline
Result from \eqref{eqn_twisted} & 21307 & 22757 & 22757 & 22757 & 22757 \\
Actual count & 21277 & 22887 & 22751 & 22748 & 22781 \\
Difference & 30 & 130 & 6 & 9 & 24 \\
\end{tabular}
\end{center}

\begin{center}
\begin{tabular}{l | c | c | c | c | c | c | c}
Discriminant modulo 7 & 0 & 1 & 2 & 3 & 4 & 5 & 6 \\ \hline
Result from \eqref{eqn_twisted} & 15316 & 17209 & 14277 & 15316 & 17024 &  18063 & 15131 \\
Actual count & 15330 & 17229 & 14327 & 15323 & 17027 & 18058 & 15150 \\
Difference & 14 & 20 & 50 & 7 & 3 & 5 & 19 \\
\end{tabular}
\end{center}

\begin{remark}
As one might guess from the shape of \eqref{eqn_thm_k1}, we obtain results on arithmetic progressions by first obtaining estimates for
\begin{equation}
N_3^{\pm}(X, \chi) := \sum_{\substack{[K : \Q] = 3 \\ 0 < \pm \Disc(K) < X}} \chi(\Disc(K)),
\end{equation}
and then using the orthogonality relations for Dirichlet characters. Our results
have their origins in work of Datskovsky and Wright, who proved that certain related $L$-functions
(see \eqref{eqn_def_shintani_orbital}) may have a pole if $\chi^6 = 1$ but are otherwise entire.
\end{remark}

Our most general theorem is Theorem \ref{thm_technical_ap}. This yields
estimates for $N_3^{\pm}(X; m, a)$ for arbitrary values of $a$ and $m$, which are unfortunately 
complicated to state.  Note in particular that such
results are interesting (and nontrivial!) when $(a, m) > 1$.
In this case, and in particular in progressions $\equiv ap \ (\textmod \ p^2)$, we find a similar
(but not identical) bias in the secondary term.

Moreover, Theorem \ref{thm_technical_ap} also allows us to simultaneously incorporate local specifications. 
For example, we can count the number of fields which split completely at a prime $p$ and have
discriminant $\equiv a \ (\textmod \ p)$, for any quadratic residue $a$ modulo $p$.
\\
\\
Our final result concerns 3-torsion in class groups in arithmetic progressions. Our most general result
(Theorem \ref{thm_technical_torsion_ap}) is again complicated to state, so we state the following analogue of 
Theorem \ref{thm_progressions}:
\begin{theorem}\label{thm_torsion_progressions}
Suppose that $(6a, m) = 1$. Then,
\begin{equation}\label{eqn_torsion_twisted}
\sum_{\substack{0 < \pm D < X \\ D \equiv a \ (\textmod \ m)}} \# \Cl_3(D) = 
\frac{3 + C^{\pm}}{\pi^2 m} \bigg( \prod_{p | m} \frac{1}{1 - p^{-2}} \bigg) X + 
\frac{8 K^{\pm}}{5 \Gamma(2/3)^3} K'_1(m, a) X^{5/6} + O(X^{18/23} m^{20/23}),
\end{equation}
where
\begin{multline}
K'_1(m, a) = \frac{1}{m} \prod_{p | m} \frac{1}{1 - p^{-2}} 
\ \times \\ 
\sump_{\chi^6 = 1} 
\overline{\chi}(a) L(1/3, \chi^{-2})
\prod_{p \nmid m} \bigg(1 - \frac{ \chi(p)^2 p^{1/3} + 1}{p (p + 1)} \bigg)
\prod_{\substack{{p | m} \\ p \nmid \textnormal{cond}(\chi)}}
\Big(1 - \chi(p)^{-2} p^{-4/3}\Big)
\prod_{\substack{{p | m} \\ p | \textnormal{cond}(\chi)}}
 \frac{\tau_p(\chi_p^2)^3}{p^2}.
\end{multline}
As in Theorem \ref{thm_progressions}, the sum is over primitive sextic characters $\chi$ to moduli dividing $m$, 
such that $\chi_p$ is of exact order $6$ for each $p$.
\end{theorem}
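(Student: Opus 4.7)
The plan is to deduce Theorem \ref{thm_torsion_progressions} from character-twisted analogues of Theorem \ref{thm_rc_torsion}, exactly as Theorem \ref{thm_progressions} is deduced from Theorem \ref{thm_rc_extended}. For each Dirichlet character $\chi$ modulo $m$ set
\[
T^{\pm}(X,\chi) := \sump_{0<\pm D<X} \chi(D)\, \#\Cl_3(D),
\]
with the sum over fundamental discriminants. Since $(6a,m)=1$, character orthogonality gives
\[
\sump_{\substack{0<\pm D<X \\ D\equiv a \ (\textmod \ m)}} \#\Cl_3(D) \;=\; \frac{1}{\phi(m)} \sum_{\chi \ (\textmod \ m)} \overline{\chi}(a)\, T^{\pm}(X,\chi),
\]
and the problem reduces to asymptotics for the individual $T^{\pm}(X,\chi)$.

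By class field theory, 3-torsion in $\Cl(\Q(\sqrt D))$ is parameterized by unramified cubic extensions, so the $T^{\pm}(X,\chi)$ are encoded in the same $\chi$-twisted orbital $L$-functions on the prehomogeneous vector space of binary cubic forms that drive the proof of Theorem \ref{thm_rc_torsion2}. By the remark following Theorem \ref{thm_rc_torsion2}, these $L$-functions are entire in $\chi$ unless $\chi^6=1$, in which case they admit simple poles at $s=1$ and $s=5/6$. A Perron-type contour shift, carried out exactly as in the proof of Theorem \ref{thm_rc_torsion2}, then produces for each sextic $\chi$ a main term $\sim X$ from the residue at $s=1$ and a secondary term $\sim X^{5/6}$ from the residue at $s=5/6$, with all non-sextic $\chi$ contributing only to the error. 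After weighting by $\overline{\chi}(a)/\phi(m)$, the pole at $s=1$ survives only for the trivial character (producing the displayed $X$-term), while the pole at $s=5/6$ yields the full primitive-sextic-character sum in $K'_1(m,a)$.

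The residue at $s=5/6$ factors as an Euler product of local zeta integrals. Away from $m$ and the conductor of $\chi$ the calculation is the one performed in Theorem \ref{thm_rc_torsion} and produces $L(1/3,\chi^{-2}) \prod_{p \nmid m}\bigl(1 - (\chi(p)^2 p^{1/3}+1)/(p(p+1))\bigr)$. At primes $p \mid m$ with $p \nmid \textnormal{cond}(\chi)$ the local specification evaluates to $1 - \chi(p)^{-2} p^{-4/3}$, and at $p \mid \textnormal{cond}(\chi)$ to $\tau_p(\chi_p^2)^3/p^2$ by the same cubic Gauss sum computation that yields \eqref{eqn_thm_k1}, since the prehomogeneous space and its local zeta integrals are identical in the two settings; only the global sieving for ``fundamental discriminant'' versus ``maximal cubic order'' differs, and this difference is absorbed into the normalization $\frac{1}{m}\prod_{p \mid m}(1-p^{-2})^{-1}$. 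Assembling these factors against $\overline{\chi}(a)$ recovers $K'_1(m,a)$.

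The principal technical obstacle is the uniform error term $O(X^{18/23}m^{20/23})$, rather than the weaker $O(X^{18/23}m^{40/23})$ one would obtain by applying Theorem \ref{thm_rc_torsion2} residue class by residue class. The improvement must exploit cancellation among the $\chi$-sums: for each conductor $q \mid m$ one bounds the combined error by square-root-in-$m/q$ estimates on the characters induced up to modulus $m$, paralleling the treatment in Theorem \ref{thm_technical_ap}. Combined with the Weil bound $|\tau_p(\chi_p^2)| \le p^{1/2}$ — also needed to control the dependence of the secondary term itself on $m$ — this yields the stated uniformity. Packaging these estimates carefully is the delicate step; the remainder of the proof is a mechanical adaptation of the arguments already used for Theorems \ref{thm_progressions} and \ref{thm_rc_torsion2}.
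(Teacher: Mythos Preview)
Your overall strategy---orthogonality of characters, reduction to twisted orbital $L$-functions, and extraction of residues at $s=1$ and $s=5/6$---is exactly the paper's approach, and your description of how the local factors at $s=5/6$ assemble into $K'_1(m,a)$ is essentially correct.

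The problem is your final paragraph on the error term. You claim that a naive application of the per-character estimate would give $O(X^{18/23} m^{40/23})$ and that one must recover $m^{20/23}$ via cancellation among characters and the Weil bound. This is not right. The paper's per-character estimate (Theorem~\ref{thm_technical_torsion_ap}) already gives $O(X^{18/23+\epsilon} N^{20/23})$ for \emph{each} $\chi$, where $N$ is the conductor of the combined test function $\Phi_N$. For the progression $a\ (\textmod\ m)$ with $(6a,m)=1$, the relevant $N$ is of size $m$, not $m^2$: the local specification at each $p\mid m$ is merely ``unramified'' ($e_p=1$), and the character twist contributes $p^{f_p}$. The estimate comes directly from the trivial bound $|\widehat{\Phi}_N(x)|\le 1$ inserted into the Chandrasekharan--Narasimhan contour argument, exactly as in Sections~\ref{subsec_local_torsion} and~\ref{subsec_ap}. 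After averaging $\frac{1}{\phi(m)}\sum_\chi \overline{\chi}(a)(\cdots)$, the error is still $O(X^{18/23+\epsilon} m^{20/23})$---no cancellation across characters, no square-root savings, and no Weil bound is invoked.

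So the ``delicate step'' you describe does not exist; the error term falls out immediately once you have the analogue of Theorem~\ref{thm_technical_ap} for the $V_p$-sieve. Your comparison to ``applying Theorem~\ref{thm_rc_torsion2} residue class by residue class'' is also off: a residue class modulo $m$ is not a local specification in the sense of that theorem, so there is no such direct application to compare against.
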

As we did with Theorem \ref{thm_progressions}, we illustrate our result with numerical data modulo 5 and 7.
Here we compare the main terms of \eqref{eqn_torsion_twisted} and \eqref{eqn_torsion3} to the actual counts
of $\# \Cl_3(D)$ for $0 < D < 2 \cdot 10^6$. Note that these counts include the trivial element of $\Cl(D)$ for
each $D$.

\begin{center}
\begin{tabular}{l | c | c | c | c | c  }
Discriminant modulo 5 & 0 & 1 & 2 & 3 & 4  \\ \hline
Result from \eqref{eqn_torsion_twisted} & 126942 & 160239 &  160239  &   160239 & 160239  \\
Actual count & 126841 & 160373 & 160202 & 160252 & 160207 \\
Difference & 101 & 134 & 37 & 13 & 32 \\
\end{tabular}
\end{center}

\begin{center}
\begin{tabular}{l | c | c | c | c | c | c | c}
Discriminant modulo 7 & 0 & 1 & 2 & 3 & 4 & 5 & 6 \\ \hline
Result from \eqref{eqn_torsion_twisted} & 95095  & 113486 & 109566 & 110919 & 113345 & 114699 & 110779 \\
Actual count & 95138 & 113407 & 109506 & 110955 & 113232 & 114741 & 110898 \\
Difference & 43 & 79 & 60 & 36 & 113 & 42 & 119 \\
\end{tabular}
\end{center}

In this connection, we mention recent work of Hough \cite{H}, which provides another proof of the Davenport-Heilbronn
theorem for class groups of imaginary quadratic fields (Theorem \ref{thm_rc_torsion}). His methods naturally produce
the main and secondary terms of Theorem \ref{thm_rc_torsion}, albeit with an error term larger than $X^{5/6}$. His methods
are notable for avoiding the Delone-Faddeev correspondence (to be described below); he uses a result of Soundararajan
\cite{sound} which parameterizes nontrivial ideals of $\Cl(\Q(\sqrt{D}))$
in terms of a Diophantine equation, and he then proves
his result as a consequence of an equidistribution result for the associated Heegner points.

Although Hough's methods do not extend to counting cubic fields, they do extend to counting $k$-torsion in class groups
of imaginary quadratic fields for odd $k > 3$. He conjectures that a negative secondary term of order $X^{\frac{1}{2} + \frac{1}{k}}$
should appear. Moreover, as he is presently investigating, these methods carry over to arithmetic progressions
as well.

We also mention that the main term of \eqref{eqn_torsion_twisted} was previously obtained by Nakagawa and Horie \cite{NH}, with
an application to elliptic curves. In particular, they proved that if $D \equiv 2 \ (\textmod \ 3)$ is negative, and 
the class group of $\Q(\sqrt{D})$ has no nontrivial 3-torsion, then the elliptic curve $D y^2 = 4 x^3 -1$ has no rational points.
In particular, this gave a family of elliptic curves, a positive proportion of which have Mordell-Weil rank 0. Related ideas were
pursued in subsequent works of James \cite{james}, Vatsal \cite{vatsal}, and Wong \cite{wong}, among others. 

One naturally asks if our secondary terms should be reflected in counting functions of elliptic curves. 
Some quick numerical experiments
suggested a negative answer; for example, of James's quadratic twists 
$Dy^2 = x^3 - x^2 + 72x + 368$ with $D < 2500$ a fundamental discriminant, 371 of them have\footnote{We performed our computations
using Sage \cite{sage} with the \url{EllipticCurve.rank(proof=False)} function, so these counts are not proved correct.}
rank 0 and 389 have positive rank. Based on \eqref{eqn_torsion1}, one might guess that an excess of these curves has
rank 0, but this does not seem to happen.

\vskip 0.1in
{\bf Summary of the proofs.} The proofs of all of our results rely on the analytic theory of the {\itshape adelic Shintani zeta function}, due to Shintani \cite{S}
and Datskovsky-Wright \cite{DW1, DW2, DW3} and further developed by the present authors \cite{TT}. This contrasts with the geometric approach adopted by Bhargava and
his collaborators \cite{BBP, B, BST}. 

To keep the exposition as simple as possible, we have organized our paper around the proof of Theorem \ref{thm_rc}. Except as noted to the contrary 
(and in Section \ref{section_torsion_bounds} in particular) our discussion only pertains to Theorem \ref{thm_rc};
the discussion of the proofs of our various generalizations
is postponed to Section \ref{sec_generalizations}.

\subsection{The Davenport-Heilbronn and Delone-Faddeev correspondences}

Following the original work of Davenport and Heilbronn \cite{DH}, we begin by relating our problem to the more
tractable problem of counting certain integral
binary cubic forms. This is accomplished through the well-known correspondence of Davenport-Heilbronn
and Delone-Faddeev \cite{DF}. An elegant simplified and self-contained account of this work can be found
in a paper of Bhargava \cite{B}, so we will only briefly summarize it here.

In \cite{DH}, Davenport and Heilbronn established the main term in \eqref{eqn_main_thm} by first relating
{\itshape cubic rings} to {\itshape integral binary cubic
forms}, and then counting those cubic forms which correspond to maximal orders in cubic fields.

A {\itshape cubic ring} is a commutative ring which is free of rank 3 as
a $\Z$-module. The discriminant of a cubic ring is defined to be the determinant of the trace form
$\langle x, y \rangle = \Tr(xy)$, and the discriminant of the maximal order of a cubic field is equal to
the discriminant of the field.

The lattice of {\itshape integral binary cubic forms} is defined by
\begin{equation}\label{def_vz}
V_{\Z} := \{ a u^3 + b u^2 v + c u v^2 + d v^3 \ : a, b, c, d \in \Z \},
\end{equation}
and the {\itshape discriminant} of such a form is given by the usual equation
\begin{equation}\label{eqn_disc_formula}
\Disc(f) = b^2 c^2 - 4 a c^3 - 4 b^3 d - 27 a^2 d^2 + 18 abcd.
\end{equation}
There is a natural action of $\GL_2(\Z)$ (and also of $\SL_2(\Z)$) on $V_{\Z}$, given by
\begin{equation}
(\gamma \cdot f)(u, v) = \frac{1}{\det \gamma} f((u, v) \cdot \gamma).
\end{equation}
We call a cubic form $f$ {\itshape irreducible} if $f(u, v)$ is irreducible as a polynomial over $\Q$, and
{\itshape nondegenerate} if $\Disc(f) \neq 0$.

The Delone-Faddeev correspondence, which extends that of Davenport-Heilbronn and which was further extended by Gan, Gross, and Savin \cite{GGS} to include the degenerate case, 
is as follows:

\begin{theorem}[\cite{DF, GGS}] There is a natural, discriminant-preserving 
bijection between the set of $\GL_2(\Z)$-equivalence classes
of integral binary cubic forms and the set of isomorphism classes of cubic rings. Furthermore, under this
correspondence, irreducible cubic forms correspond to orders in cubic fields.
\end{theorem}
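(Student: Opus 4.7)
The strategy is to construct explicit maps in both directions and verify they are mutually inverse, $\GL_2(\Z)$--equivariant, discriminant--preserving, and respect the irreducible/order structure. I will follow the constructive approach of Gan--Gross--Savin (cf.\ Bhargava \cite{B}), which handles degenerate forms uniformly.

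\emph{Forms to rings.} Given $f(u,v) = au^3 + bu^2v + cuv^2 + dv^3 \in V_\Z$, I define a cubic ring $R_f$ by
\[
R_f = \Z \oplus \Z\omega \oplus \Z\theta,
\]
with multiplication forced by
\[
\omega\theta = -ad,\qquad \omega^2 = -ac + b\omega - a\theta,\qquad \theta^2 = -bd + d\omega - c\theta.
\]
The key calculation is that this multiplication is commutative and associative; associativity reduces to checking $(\omega^2)\theta = \omega(\omega\theta)$ and $\omega(\theta^2) = (\omega\theta)\theta$, each of which becomes a polynomial identity in $a,b,c,d$ that I would verify directly. Computing the Gram matrix of the trace form on $\{1,\omega,\theta\}$ then yields $\Disc(R_f) = b^2c^2 - 4ac^3 - 4b^3d - 27a^2d^2 + 18abcd = \Disc(f)$.

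\emph{Rings to forms.} Given a cubic ring $R$, I choose a $\Z$--basis $\{1, \omega, \theta\}$. Replacing $\omega \mapsto \omega + m$ and $\theta \mapsto \theta + n$ for suitable $m, n \in \Z$, I can arrange that the coefficient of $1$ in the expansion of $\omega\theta$ lies in whatever residue class is needed; in fact one can make $\omega\theta \in \Z$, which is the \emph{normal form} condition. Once the basis is normalized, the coefficients $a, b, c, d$ in the multiplication table are read off uniquely from the expansions of $\omega^2,\theta^2,\omega\theta$, producing a form $f_R \in V_\Z$. A brief check shows $R_{f_R} \cong R$ and $f_{R_f} = f$.

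\emph{Equivariance.} I would then track how a basis change by $\gamma = \sm{p}{q}{r}{s} \in \GL_2(\Z)$ acting on the lattice $\Z\omega \oplus \Z\theta$ (combined with the unique normalizing translation) transforms the associated form. A direct computation shows that the resulting form equals $\gamma \cdot f$ with the action normalized exactly as in the definition in the excerpt, including the factor $(\det\gamma)^{-1}$. Conversely, any ring isomorphism $R \to R'$ sends $1 \mapsto 1$ and induces a $\GL_2(\Z)$ change of basis on the quotient $R/\Z \cong R'/\Z$, and after renormalization this realizes the $\GL_2(\Z)$--equivalence of the corresponding forms.

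\emph{Irreducible forms and orders.} Finally, $R_f \otimes \Q$ is the $\Q$--algebra with the same presentation; it is a field precisely when the characteristic polynomial of a generic element is irreducible, which is equivalent to $f$ being irreducible over $\Q$. In that case $R_f$ is a $\Z$--order in the cubic field $R_f \otimes \Q$, and conversely every order in a cubic field is free of rank $3$ over $\Z$, so arises this way. The main technical obstacle is verifying associativity and the $\GL_2(\Z)$--equivariance cleanly — these are essentially polynomial identities in $a,b,c,d$ and the entries of $\gamma$, which are unilluminating but straightforward; every other step is either a determinant calculation or a change of variables.
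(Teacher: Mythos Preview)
The paper does not actually prove this theorem; it is stated with attribution to \cite{DF, GGS} and the reader is referred to Bhargava's exposition \cite{B} for a self-contained account. Your proposal is precisely that standard constructive proof (the Gan--Gross--Savin/Bhargava approach), so there is nothing to compare against --- you have supplied exactly what the paper outsources.

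One cosmetic point: your sign convention in the multiplication table ($\omega^2 = -ac + b\omega - a\theta$, etc.) differs from the one the paper uses later in the proof of Lemma~\ref{lemma_overrings} ($\omega^2 = -ac - b\omega + a\theta$, etc.). Both are internally consistent and related by the basis change $(\omega,\theta)\mapsto(-\omega,-\theta)$, so this is harmless, but you may wish to align your convention with the paper's if you intend to splice your argument in.
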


To count cubic fields, Davenport and Heilbronn count their maximal orders, which are exactly those orders
which are maximal at all primes $p$:
\begin{proposition}[\cite{DH, B}\label{def_up}]
Under the Delone-Faddeev correspondence, a cubic ring $R$ is maximal if any only if its corresponding
cubic form $f$ belongs to the set $U_p \subset V_{\Z}$ for all $p$, defined by the 
following equivalent conditions:
\begin{itemize}
\item
The ring $R$ is not contained
in any other cubic ring with index divisible by $p$.
\item
The cubic form $f$ is not a multiple of $p$, and there is no $\GL_2(\Z)$-transformation of
$f(u, v) = a u^3 + b u^2 v + c u v^2 + d v^3$ such that $a$ is a multiple of $p^2$ and $b$ is a multiple
of $p$.
\end{itemize}
\end{proposition}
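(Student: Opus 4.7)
The approach is to reduce to a local question at each prime $p$, then verify the equivalence of the two conditions defining $U_p$ by explicit computation with the multiplication table supplied by the Delone-Faddeev correspondence. The first bullet is standard: a cubic ring $R$ fails to be maximal precisely when it is non-maximal at some prime $p$, meaning there exists a cubic ring $R' \supsetneq R$ with $p \mid [R':R]$. Passing to a minimal such extension one may assume $[R':R] = p$, so the whole problem reduces to identifying, one prime at a time, those forms $f$ whose corresponding rings admit an overring of index $p$ (together with the slightly degenerate case that $f$ is itself divisible by $p$).

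For the detailed computation I would use the explicit form of the correspondence: if $R$ corresponds to $f = au^3 + bu^2 v + cuv^2 + dv^3$, a basis $1,\omega,\theta$ of $R$ can be chosen with $\omega\theta = -ad$, $\omega^2 = -ac + b\omega - a\theta$, and $\theta^2 = -bd + d\omega - c\theta$. In the easy direction, assuming $p^2 \mid a$ and $p \mid b$, I would check directly that $R' := R + \Z\cdot(\omega/p)$ is closed under multiplication by computing $(\omega/p)^2$ and $(\omega/p)\theta$, producing an overring of index $p$; in the separate case that $f$ is a multiple of $p$, the analogous computation shows that $R+\Z\cdot(\omega/p)+\Z\cdot(\theta/p)$ is an overring of index $p^2$. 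For the converse, given an overring $R' \supsetneq R$ of index $p$, any generator $\alpha$ of $R'/R$ satisfies $p\alpha \equiv s\omega + t\theta \pmod{R}$ with $(s,t) \not\equiv (0,0) \pmod{p}$; a $\GL_2(\Z)$-change of basis on $(\omega,\theta)$ then brings $s\omega + t\theta$ to $\omega$, and demanding that $(\omega/p)^2$ and $(\omega/p)\theta$ lie in $R+\Z\cdot(\omega/p)$ forces $p^2 \mid a$ and $p \mid b$ in the new coordinates (unless $p$ already divides all of $a,b,c,d$, the other allowed case).

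The main obstacle I anticipate is the bookkeeping in the dictionary between changes of $\Z$-basis of $R$ and the $\GL_2(\Z)$-action on $f$: one must verify that the normalization $\omega\theta \in \Z$ can be preserved by absorbing translations $\omega \mapsto \omega+n$, $\theta \mapsto \theta+m$ into the constant terms, and that the induced action on $(a,b,c,d)$ is exactly the $\GL_2(\Z)$-action on $f$ defined just after \eqref{def_vz}, including the $1/\det\gamma$ twist. Once this dictionary is pinned down, both directions become routine $p$-adic divisibility checks, and reassembling the local statements over all primes $p$ yields the proposition.
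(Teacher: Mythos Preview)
The paper does not give its own proof of this proposition; it is quoted as a known result from \cite{DH, B}, and the reader is referred to Bhargava's exposition. Your plan is exactly the standard argument one finds there, and indeed the ``easy direction'' computation you describe (adjoining $\omega/p$ when $p^2\mid a$ and $p\mid b$) is reproduced verbatim later in the paper in the proof of Lemma~\ref{lemma_overrings}. So your approach is correct and coincides with the intended one.

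Two small points. First, your multiplication table has opposite signs from the one the paper uses in Lemma~\ref{lemma_overrings} (there $\omega^2 = -ac - b\omega + a\theta$, etc.); both conventions appear in the literature and nothing in the argument depends on the choice, but you should be consistent with whichever source you cite for the correspondence. Second, in the converse direction your sentence ``$p\alpha \equiv s\omega + t\theta \pmod{R}$'' is not what you mean: since $[R':R]=p$ one has $p\alpha \in R$, so $p\alpha \equiv 0 \pmod{R}$ trivially. The correct statement is that any $\alpha \in R'\setminus R$ can be written as $\alpha \equiv (s\omega + t\theta)/p \pmod{R}$ with $(s,t)\not\equiv (0,0)\pmod{p}$; the case $p\mid s$ and $p\mid t$ is ruled out because it would force $\alpha \equiv r/p \pmod{R}$ for some integer $r$, and a cubic ring meets $\Q$ in $\Z$. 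With this correction the rest of your converse argument goes through as written.
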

In particular, the condition $U_p$ only depends on the coordinates of $f$ modulo $p^2$.

The proof of the main term in \eqref{conj_rc} goes as follows: One obtains an
asymptotic formula for the number of cubic rings of bounded discriminant by counting lattice points
in fundamental domains for the action of $\GL_2(\Z)$, bounded by the constraint $|\Disc(x)| < X$.
The fundamental domains may be chosen so that almost all reducible rings correspond to
forms with $a = 0$, and so these may be excluded from the count.

One then multiplies this asymptotic by the 
product of all the local densities of the sets $U_p$. This yields
a heuristic argument for the main term in \eqref{conj_rc}, and one incorporates a simple sieve to convert
this heuristic into a proof.

\begin{remark} The Davenport-Heilbronn correspondence also applies to reducible maximal cubic rings.
It is readily shown that (up to isomorphism) these are the rings $\Z \times \Z \times \Z$ or $\mathcal{O}_K \times \Z$,
where $\mathcal{O}_K$ is the ring of integers of a quadratic field. The unit element is given by $(1, 1, 1)$
or $(1, 1)$ respectively, and the discriminant is equal to 1 or $\Disc(K)$ as appropriate.
\end{remark}

\subsection{The work of Belabas, Bhargava, and Pomerance}
In \cite{BBP}, Belabas, Bhargava, and Pomerance (BBP) introduced improvements to Davenport and Heilbronn's
method, and obtained an error term of $O(X^{7/8 + \epsilon})$ in \eqref{conj_rc} (and  also in \eqref{eqn_torsion1}). They begin by observing that
\begin{equation}\label{eqn_BBP}
N_3^{\pm}(X) = \sum_{q \geq 1} \mu(q) N^{\pm}(q, X),
\end{equation}
where $N^{\pm}(q, X)$ counts the number of cubic orders of discriminant $0 < \pm D < X$ which
are nonmaximal at every prime dividing $q$.
For large $q$, BBP prove that
$N^{\pm}(q, X) \ll X 3^{\omega(q)} / q^2$
using reasonably elementary methods. Therefore, one may truncate the sum in \eqref{eqn_BBP}
to $q \leq Q$, with error $\ll X/Q^{1 - \epsilon}.$ We will use this fact in our proof as well.

For small $q$, BBP estimate $N^{\pm}(q, X)$ with explicit error terms using geometric methods.
These error terms are good enough to allow them to take the sum in \eqref{eqn_BBP} up to $(X \log X)^{1/8}$, 
which yields a final error term of $O(X^{7/8 + \epsilon})$.

In addition, their methods extend to counting {\itshape quartic} fields, where they obtain a main term of $C_4 X$ 
with error $\ll X^{23/24 + \epsilon}$.

\subsection{Shintani zeta functions and the analytic approach}
The main idea of this paper is 
to estimate a quantity related to $N^{\pm}(q, X)$ using the analytic theory of Shintani
zeta functions. The {\itshape Shintani zeta functions} associated to the space of binary cubic forms
are defined by the Dirichlet series
\begin{equation}\label{eqn_shintani_def_1}
\xi^{\pm}(s) := \sum_{x \in \SL_2(\Z) \backslash V_{\Z}} \frac{1}{|\Stab(x)|} |\Disc(x)|^{-s},
\end{equation}
where the lattice $V_{\Z}$ was
defined in \eqref{def_vz}, and the sum is over elements of positive or negative discriminant respectively. Here $\Stab(x)$, the stabilizer
of $x$ in $\SL_2(\Z)$, is always an abelian group of order 1 or 3. 

By the Delone-Faddeev correspondence, $\xi^{\pm}(s)$ is {\itshape almost} the generating series for cubic rings.
There are two differences: The Shintani zeta function counts $\SL_2(\Z)$-orbits rather than $\GL_2(\Z)$-orbits,
and it weights some of them by a factor of 1/3. As we will see, these discrepancies depend on the Galois group
of the splitting field of the cubic form, and we will be able to adjust for them later.

These series converge absolutely for $\Re(s) > 1$, and Shintani proved \cite{S} that
that these zeta functions enjoy analytic continuation and a functional equation, to be described later. It therefore
follows that we can use Perron's formula and a method of Landau \cite{L} to estimate their partial sums. In particular, we have
\begin{equation}\label{eqn_perron1}
\sum_{\substack{x \in \SL_2(\Z) \backslash V_{\Z} \\ \pm \Disc(x) < X}} \frac{1}{|\Stab(x)|}
= \int_{2 - i \infty}^{2 + i \infty} \xi^{\pm}(s) \frac{X^s}{s} ds
= \Res_{s = 1} \xi^{\pm}(s) X + \frac{6}{5} \Res_{s = 5/6} \xi^{\pm}(s) X^{5/6} + O(X^{3/5 + \epsilon}).
\end{equation}
One may also translate this into an estimate for the number of cubic {\itshape orders} of discriminant at most $X$,
with main terms of order $X$ and $X^{5/6}$ and error $\ll X^{3/5 + \epsilon}$.

To count cubic fields, we introduce the {\itshape $q$-nonmaximal Shintani zeta function} $\xi^{\pm}_q(s)$,
which counts only those cubic forms in \eqref{eqn_shintani_def_1} which correspond to orders which are nonmaximal at
$q$.
By work of Datskovsky and Wright \cite{DW1, DW2} or F. Sato \cite{sato}, it follows that
these zeta functions have analytic continuation and functional equations, so that their partial sums
may be estimated as in \eqref{eqn_perron1}. These partial sums are closely related to $N^{\pm}(q, X)$,
and we incorporate estimates for these sums into the sieve \eqref{eqn_BBP} to obtain our results.

The main technical difficulty is that 
the error terms in \eqref{eqn_perron1} now depend on $q$, and we must explicitly analyze this dependence.
The key step is a careful analysis of certain {\itshape cubic Gauss sums}
appearing in the functional equations for the $q$-nonmaximal Shintani zeta functions. This is carried out in
our companion paper \cite{TT}. These Gauss sums are small on average, so we obtain
error terms in \eqref{eqn_perron1} which are not too bad in $q$-aspect. This fact allows us to take a large cutoff
$Q$ in \eqref{eqn_BBP} and obtain a reasonably good error
term in Roberts' conjecture.

We will in fact introduce a smoothing technique to obtain better error terms, but this discussion illustrates the principle of our
proof. 
\\
\\
{\bf Notation.} For the most part our choice of notation is standard. Throughout, $p$ will denote a prime and $q$ a
squarefree integer. We have referred to $\xi^{\pm}(s)$ as
``Shintani zeta functions'', which has some precedent in the literature but is not universal. We also
remark that the notation $\xi_1(s)$ and $\xi_2(s)$ seems to be common in place of $\xi^{\pm}(s)$, but we
did not want to risk confusion with the numerical parameter $q$.

Throughout, $\epsilon$ will denote a positive number which may be taken to be arbitrarily small, 
not necessarily the same at each occurrence. Any implied constants will always be allowed to
depend on $\epsilon$. 

As is familiar in analytic number theory, we write $\omega(n)$ and $\Omega(n)$ for the
number of prime divisors of a positive integer $n$, counted without and with multiplicity respectively. 
It is not difficult to prove that $\omega(n)$ satisfies the bound $A^{\omega(n)} \ll_{A, \epsilon} n^{\epsilon}$
for any $A > 1$, and we will use this bound frequently.
\\
\\
{\bf Remark.} At this time, our companion paper \cite{TT} is still in preparation, and a preliminary version
may be found on the first 
author's website\footnote{\url{http://www.math.kobe-u.ac.jp/~tani/}}. After \cite{TT} is finished, we will update the references in this 
paper.

We find it convenient to refer to \cite{TT} for facts about Shintani zeta functions, but we emphasize that
\cite{TT} builds on the work of other mathematicians, especially Datskovsky and Wright
\cite{DW1, DW2, DW3}. Some of the results quoted from \cite{TT} are originally due to Datskovsky and Wright
and appear in \cite{DW1, DW2}. Please see our companion paper for
a more specific discussion of where our work builds upon that of Datskovsky and Wright.
\\
\\
{\bf Organization of this paper.} On account of the excellent exposition in Bhargava's paper 
(\cite{B}; see also \cite{BST}), we will not say any more about the Davenport-Heilbronn and Delone-Faddeev
correspondences. Instead, we begin in Section \ref{sec_shintani} with the analytic theory of the $q$-nonmaximal zeta functions.
This theory is developed in \cite{TT} and we summarize it here. We also describe the original heuristic argument of Datskovsky-Wright and Roberts, which is
quite similar to our proof. 

In addition, we discuss recent and ongoing work and some open problems at the end of Section \ref{sec_shintani}. We postponed this
discussion from the introduction so we could use the language developed in Section \ref{sec_shintani}.

In Section \ref{section_dual_bounds} we prove bounds for certain partial sums of the duals to the $q$-nonmaximal Shintani zeta
functions. These will be needed in Section \ref{sec_proof}, and their proofs use
corresponding bounds on the cubic Gauss sums, proved in \cite{TT}.

In Section \ref{section_torsion_bounds} we carry out the analysis in Section \ref{section_dual_bounds}
for the 3-torsion problem. (This section may be skipped by readers only interested in the proof of Roberts' 
conjecture.) This is the only part of the proof that is substantially different for this problem, and
indeed a new technical difficulty appears which must be resolved.

In Section \ref{sec_proof} we present our proof of Roberts' conjecture.
We first reduce Roberts' conjecture to a statement about
partial sums of Shintani zeta functions. We then estimate these 
sums using a variation of \eqref{eqn_perron1}, due essentially to Chandrasekharan and Narasimhan \cite{CN}. 

We conclude in Section \ref{sec_generalizations} by proving our more general results, including the extension to
3-torsion in quadratic fields. As we describe, almost all of our arguments
carry over to the general case.

\section*{Acknowledgments}
To be added later, after the referee report is received.

\section{$q$-nonmaximal Shintani zeta functions and their duals}\label{sec_shintani}
We recall that the {\itshape Shintani zeta functions associated to the space of binary cubic forms} are defined by the Dirichlet
series
\begin{equation}\label{eqn_def_shintani}
\xi^{\pm}(s) := \sum_{x \in \SL_2(\Z) \backslash V_{\Z}} \frac{1}{|\Stab(x)|} |\Disc(x)|^{-s},
\end{equation}
where $V_{\Z}$ was
defined in \eqref{def_vz}, the sum ranges over points of positive or negative discriminant respectively, and $\Stab(x)$, 
the stabilizer of a point $x$ in $\SL_2(\Z)$, is an abelian group of order 1 or 3. We now introduce {\itshape $q$-nonmaximal}
analogues of these zeta functions. Throughout, $q$ will be a squarefree integer.

\begin{definition}\label{def_q_nonmax}
The {\upshape $q$-nonmaximal
Shintani zeta functions} $\xi^{\pm}_q(s)$ are defined by the formula \eqref{eqn_def_shintani}, with the sum restricted to 
those $x$ not belonging
to $U_p$ (defined in Proposition \ref{def_up}) for any $p | q$.
\end{definition}

In this section we describe the analytic theory of these
functions, following Shintani \cite{S}, Datskovsky-Wright \cite{DW1, DW2}, and our companion paper
\cite{TT}.

Shintani's original theorem relates the zeta functions $\xi^{\pm}(s)$ to dual zeta functions
\begin{equation}\label{eqn_def_dual_shintani}
\widehat{\xi}^{\pm}(s) := \sum_{x \in \SL_2(\Z) \backslash \widehat{V}_{\Z}} \frac{1}{|\Stab(x)|} |\Disc(x)|^{-s},
\end{equation}
where $\widehat{V}_{\Z}$, the dual lattice to $V_{\Z}$, is defined by
\begin{equation}
\widehat{V}_{\Z} := \{ a u^3 + b u^2 v + c u v^2 + d v^3 \ : a, d \in \Z; \ \  b, c \in 3 \Z \}.
\end{equation}

To describe the analogue for the $q$-nonmaximal zeta functions $\xi^{\pm}_q(s)$, we must introduce the cubic Gauss sums.
We define $\Phi_q(x)$ to be the characteristic
function of those $x$ not in $U_p$ for any $p | q$, defined on either $V_{\Z}$ or $V_{\Z / q^2 \Z}$. 
The {\itshape cubic Gauss sum} is the dual to $\Phi_q(x)$, defined as
the following function on $\widehat{V}_{\Z}$:
\begin{equation}\label{def_phihat}
\widehat{\Phi}_q(x) := \frac{1}{q^8} \sum_{y \in V_{\Z / q^2 \Z}} \Phi_q(y) \exp(2 \pi i [x, y] / q^2).
\end{equation}
Here
\begin{equation}
[x, y] = x_4 y_1 - \frac{1}{3} x_3 y_2 + \frac{1}{3} x_2 y_3 - x_1 y_4
\end{equation}
is the alternating bilinear form used to identify $V$ with $\widehat{V}$, where
$x_i$ and $y_j$ are the coordinates of $x$ and $y$ respectively.

\begin{remark}
The dual $\widehat{\Phi}_q(x)$ might be thought of as a sum over $\frac{1}{q^2} \Z / \Z$, as it arises as
a product of $p$-adic Fourier transforms of the function $\Phi_q$. This integral reduces naturally to a finite
sum over  $V_{\frac{1}{q^2} \Z / \Z}$, which is equivalent to the sum given above.
\end{remark}

We observe that  $\widehat{\Phi}_q$ satisfies the multiplicativity property
\begin{equation}\label{eqn_mult}
\widehat{\Phi}_q(x) \widehat{\Phi}_{q'}(x) = \widehat{\Phi}_{q q'}(x)
\end{equation}
for all $(q, q') = 1$. We also note that if $x$ corresponds to a cubic ring $R$ under the Delone-Faddeev
correspondence, then $\widehat{\Phi}_p(x)$ depends only on $R \otimes_{\Z} \Z_p$. This implies that
$\widehat{\Phi}_p(x) = \widehat{\Phi}_p(x')$ if $x$ and $x'$ correspond to $R$ and $R'$, where $R'$ is
contained in $R$ with index coprime to $p$.

We are now prepared to describe the analytic properties of $\xi^{\pm}_q(s)$.

\begin{theorem}[Shintani \cite{S}; Datskovsky and Wright \cite{DW2}; \cite{TT}]\label{thm_DWT}
The $q$-nonmaximal Shintani zeta functions $\xi^{\pm}_q(s)$ converge absolutely for $\Re(s) > 1$, have analytic continuation to
all of $\C$, holomorphic except for poles at $s = 1$ and $s = 5/6$, and satisfy the functional equation
\begin{equation}\label{eqn_shintani_FE}
\left(
\begin{array}{c}
\xi^+_q(1 - s) \cr
\xi^-_q(1 - s) \cr
\end{array}
\right)
= 
\Gamma\bigg(s - \frac{1}{6}\bigg) \Gamma(s)^2 \Gamma\bigg(s + \frac{1}{6}\bigg) 
2^{-1} 3^{6s - 2} \pi^{-4s} 
\left(
\begin{array}{cc}
\sin 2 \pi s & \sin \pi s \cr
3 \sin \pi s & \sin 2 \pi s \cr
\end{array}
\right)
\left(
\begin{array}{c}
\widehat{\xi}^+_q(s) \cr
\widehat{\xi}^-_q(s) \cr
\end{array}
\right)
,
\end{equation}
where the dual $q$-nonmaximal Shintani zeta functions are given by
\begin{equation}\label{def_shintani_dual}
\widehat{\xi}^{\pm}_q(s) := \sum_{x \in \SL_2(\Z) \backslash \widehat{V}_{\Z}} \frac{1}{|\Stab(x)|}
\widehat{\Phi}_q(x) \big( |\Disc(x)| / q^8 \big)^{-s}.
\end{equation}

The residues are given by
\begin{equation}\label{eqn_residue}
\Res_{s = 1} \xi^{\pm}_q(s) = 
\alpha^{\pm} \prod_{p | q} \bigg(
\frac{1}{p^2} + \frac{1}{p^3} - \frac{1}{p^5} \bigg)
+
\beta \prod_{p | q} \bigg(
\frac{2}{p^2} - \frac{1}{p^4} \bigg),
\end{equation}
and
\begin{equation}\label{eqn_residue_56}
\Res_{s = 5/6} \xi^{\pm}_q(s) = \gamma^{\pm} \prod_{p | q} \bigg(
\frac{1}{p^{5/3}} + \frac{1}{p^2} - \frac{1}{p^{11/3}} \bigg),
\end{equation}
where $\alpha^+ = \pi^2 / 36$, $\alpha^- = \pi^2 / 12$, $\beta = \pi^2 / 12$,
$\gamma^+ = \frac{\zeta(1/3) \Gamma(1/3)^3}{4 \sqrt{3} \pi}$, and $\gamma^- = \sqrt{3} \gamma^+.$
\end{theorem}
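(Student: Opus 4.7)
The plan is to establish meromorphic continuation and the functional equation first, and then compute the residues by a separate local argument. The starting point is Shintani's theorem for the untwisted $\xi^{\pm}(s)$ and $\widehat{\xi}^{\pm}(s)$, which already furnishes the same $\Gamma$-factors and sine matrix appearing in \eqref{eqn_shintani_FE}, with the poles located at $s = 1$ and $s = 5/6$.

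The first step is to interpret $\xi^{\pm}_q(s)$ as a Shintani-type zeta function twisted by the weight $\Phi_q$. Two observations are crucial: $\Phi_q$ is $\SL_2(\Z)$-invariant (from the intrinsic ring-theoretic characterization of $U_p$ in Proposition \ref{def_up}), and $\Phi_q$ factors through the reduction map $V_{\Z} \to V_{\Z/q^2\Z}$ (the second condition of the same proposition). Thus $\{x \in V_{\Z} : \Phi_q(x) = 1\}$ is a union of $\SL_2(\Z)$-invariant cosets of $q^2 V_{\Z}$, and each coset gives rise to a congruence Shintani zeta function to which Shintani's original argument (integration against a Schwartz function, unfolding over a fundamental domain, and residue analysis near the cusp) applies verbatim. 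Applying Poisson summation on the finite abelian group $V_{\Z/q^2\Z}$ with the pairing $[\,,\,]$ converts the weight $\Phi_q$ into $\widehat{\Phi}_q$ and the sum into one over $\widehat{V}_{\Z}$. The scaling $|\Disc(x)|/q^8$ on the dual side arises because the dual variable lives naturally in $\tfrac{1}{q^2}\widehat{V}_{\Z}$ and $\Disc$ is homogeneous of degree four; the archimedean $\Gamma$-factors and sine matrix are unchanged because they come purely from the real Fourier analysis intrinsic to Shintani's proof.

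For the residues, I would exploit the multiplicativity $\Phi_q = \prod_{p \mid q} \Phi_p$ together with $\Phi_p = 1 - \chi_{U_p}$, and invert by M\"obius to write
\begin{equation*}
\xi^{\pm}_q(s) = \sum_{d \mid q} \mu(q/d)\, \xi^{\pm,\,d\textnormal{-max}}(s),
\end{equation*}
where $\xi^{\pm,\,d\textnormal{-max}}(s)$ is the basic Shintani series restricted to forms maximal at every prime dividing $d$. Each restricted series has an Euler-type factorization at the primes dividing $d$ (following Datskovsky--Wright), so its residues at $s = 1$ and $s = 5/6$ are products of local $p$-adic densities with the basic residues of $\xi^{\pm}(s)$. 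The residue of $\xi^{\pm}(s)$ at $s = 1$ decomposes into a cubic-field contribution (coefficient $\alpha^{\pm}$) and a reducible-ring contribution (coefficient $\beta$, coming from the rings $\Z^3$ and $\mathcal{O}_K \times \Z$, which can be enumerated directly by quadratic class number sums), while at $s = 5/6$ only the cubic-field piece $\gamma^{\pm}$ contributes. The three local density expressions in \eqref{eqn_residue} and \eqref{eqn_residue_56} then follow from direct $p$-adic volume calculations on $V_{\Z_p}$, using the Delone--Faddeev parametrization of cubic rings over $\Z_p$.

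The main obstacle is the precise bookkeeping in the twisted functional equation, particularly tracking the $q^8$ normalization in $|\Disc(x)|/q^8$ through Poisson summation and confirming that no additional $q$-dependent factors intrude into the $\Gamma$-factors or sine matrix. A related subtlety is verifying that the reducible part of $\xi^{\pm}(s)$, which supplies the $\beta$ term at $s = 1$, is regular at $s = 5/6$, so that only the cubic-field local density appears in \eqref{eqn_residue_56}; this depends on the explicit form of the reducible subseries as a product of $\zeta$-values with a pole only at $s = 1$. Both of these points are handled in the companion paper \cite{TT}, building on \cite{DW1, DW2}.
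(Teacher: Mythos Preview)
The paper does not actually prove this theorem; it is quoted, with attribution to Shintani \cite{S} for $q=1$, to Datskovsky--Wright \cite{DW1,DW2} and to the companion paper \cite{TT} for general $q$. The paper explicitly says ``This theory is developed in \cite{TT} and we summarize it here,'' and later (Section~\ref{sec_generalizations}) that the residues ``are evaluated in \cite{TT} in terms of certain adelic integrals.'' So there is no in-paper proof to compare against.

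That said, your outline is a faithful reconstruction of the standard route and is consistent with everything the paper says about the external proofs. The functional-equation part---observing that $\Phi_q$ is $\SL_2(\Z)$-invariant and factors through $V_{\Z/q^2\Z}$, then applying finite Poisson summation on top of Shintani's archimedean machinery---is exactly the mechanism by which \cite{DW2}, \cite{sato}, and \cite{TT} obtain \eqref{eqn_shintani_FE} with the same $\Gamma$- and sine-factors for all $q$. Two small remarks: (i) your M\"obius identity has a sign slip---for $q=p$ prime your formula gives $\xi^{p\textnormal{-max}}-\xi$ rather than $\xi-\xi^{p\textnormal{-max}}$; the correct expansion is $\Phi_q=\prod_{p\mid q}(1-\chi_{U_p})=\sum_{d\mid q}\mu(d)\prod_{p\mid d}\chi_{U_p}$, i.e.\ $\xi^{\pm}_q=\sum_{d\mid q}\mu(d)\,\xi^{\pm,\,d\textnormal{-max}}$. (ii) For the residues, the paper's description of \cite{TT} suggests the local factors in \eqref{eqn_residue}--\eqref{eqn_residue_56} are obtained there by direct $p$-adic (adelic) density computations rather than by your M\"obius reduction to the $d$-maximal series; the two approaches are of course equivalent, and your version is a perfectly good alternative derivation.
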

We introduce the notation
$$\widehat{\xi}^{\pm}_q(s) =: \sum_{\mu_n} b^{\pm}_q(\mu_n) {\mu_n}^{-s}$$
for the dual Shintani zeta function, where $\mu_n \in \frac{1}{q^8} \Z$ refers to the quantity
$|\Disc(x)|/q^8$ in \eqref{def_shintani_dual}. Note that in \cite{TT} the factor
of $q^{8s}$ appears in \eqref{eqn_shintani_FE} instead of \eqref{def_shintani_dual}. We chose the
normalization here to get a uniform shape for the functional
equation for all $q$, and to be consistent
with our analytic reference \cite{CN}, which we will describe later.

We can now present the heuristic argument of
Datskovsky-Wright and Roberts.
For any set of primes $\mathcal{P}$, let $N^{\pm}_{3, \mathcal{P}}(X)$ denote
the number of cubic orders $\mathcal{O}$ with $\pm \Disc(\mathcal{O}) < X$ which are maximal at all primes in $\mathcal{P}$.
Assuming\footnote{In fact our assumption is a bit rash, but for $\mathcal{P} = \emptyset$ see \cite{shin_reducible} for a proof with an error term
of $O(X^{2/3 + \epsilon})$.}
that we can separately estimate and subtract the contribution from reducible rings, the equations above imply that
\begin{equation}\label{eqn_roberts_heuristic}
N^{\pm}_{3, \mathcal{P}}(X) = 
\frac{1}{2} \alpha^{\pm} X \prod_{p \in \mathcal{P}} \bigg(1 - \frac{1}{p^2}\bigg) \bigg(1 - \frac{1}{p^3}\bigg)
+ \frac{3}{5} \gamma^{\pm} X^{5/6} \prod_{p \in \mathcal{P}}
\bigg(1 - \frac{1}{p^{5/3}} \bigg) \bigg(1 - \frac{1}{p^2} \bigg) + O_{\mathcal{P}}(X^{3/5 + \epsilon}).
\end{equation}
Formally taking a limit as $\mathcal{P}$ tends to the set of all primes, we obtain \eqref{conj_rc}.

We also see that the inclusion-exclusion sieve \eqref{eqn_BBP} introduced by Belabas, Bhargava, 
and Pomerance allows us {\itshape in principle} to prove Roberts' conjecture.
However, without an analysis of the $\mathcal{P}$-dependence of the error term in \eqref{eqn_roberts_heuristic},
it is unclear that we can obtain an error term smaller than $X^{5/6}$. Indeed, our initial
attempts yielded ``proofs'' of Roberts' conjecture with error terms that were too large.

To analyze the $\mathcal{P}$-dependence of the error terms in \eqref{eqn_roberts_heuristic},
we must study the cubic Gauss sum in \eqref{def_phihat}.This sum is studied in \cite{TT}, and Section \ref{section_dual_bounds}
we will use these results to prove bounds on appropriate partial sums of the dual zeta functions $\widehat{\xi}^{\pm}_q(s)$.

Before proceeding, we will simplify the functional equations by using a diagonalization argument of Datskovsky and Wright.
At the end of this section, 
we will also describe some related work involving Shintani zeta functions.

\subsection{Datskovsky and Wright's diagonalization}\label{subsec_diag} To simplify our analysis
we apply an observation of Datskovsky and Wright \cite{DW2}.
The functional equations above have a curious matrix form, such that the negative and positive discriminant
Shintani zeta functions are interdependent. By diagonalizing this matrix, we can
greatly simplify the form of the functional equation. 

We define, for each $q$, diagonalized Shintani zeta functions
\begin{equation}\label{eqn_diag_1}
\xi^{\add}_q(s) := 3^{1/2} \xi^+_q(s) + \xi^-_q(s),
\end{equation}
\begin{equation}\label{eqn_diag_2}
\xi^{\sub}_q(s) := 3^{1/2} \xi^+_q(s) - \xi^-_q(s).
\end{equation}
We diagonalize the dual zeta functions in exactly the same way.

The diagonalizations then take the following shape. Define
$$\Lambda^{\add}_q(s) := \bigg( \frac{2^4 \cdot 3^6}{\pi^4} \bigg)^{s/2} \Gamma\bigg(\frac{s}{2}\bigg)
 \Gamma\bigg(\frac{s}{2} + \frac{1}{2} \bigg) \Gamma\bigg(\frac{s}{2} + \frac{1}{12} \bigg) \Gamma\bigg(\frac{s}{2} - \frac{1}{12}\bigg)
\xi^{\add}_q(s),$$
$$\Lambda^{\sub}_q(s) := \bigg( \frac{2^4 \cdot 3^6}{\pi^4} \bigg)^{s/2} \Gamma\bigg(\frac{s}{2}\bigg)
 \Gamma\bigg(\frac{s}{2} + \frac{1}{2} \bigg) \Gamma\bigg(\frac{s}{2} + \frac{5}{12} \bigg) \Gamma\bigg(\frac{s}{2} + \frac{7}{12}\bigg)
\xi^{\sub}_q(s),$$
and define $\widehat{\Lambda}^{\add}_q(s)$ and $\widehat{\Lambda}^{\sub}_q(s)$ in the same way. Then, the functional equations take the
shape 
\begin{equation}\label{eqn_diag_fe}
\Lambda^{\add}_q(1 - s) =  3 \widehat{\Lambda}^{\add}_q(s),
\end{equation}
\begin{equation}\label{eqn_diag_fe2}
\Lambda^{\sub}_q(1 - s) =  -3 \widehat{\Lambda}^{\sub}_q(s).
\end{equation}
This is the classical shape for functional equations of zeta functions, apart from the interesting factors of $\pm 3$, 
and it will be a convenient one to work with.
We also note the interesting fact that only $\Lambda_{\add}$, and not $\Lambda_{\sub}$, retains the pole at
$s = 5/6$.

\subsection{Some related work} We conclude this section by describing some recent and ongoing related work.
These results will not be needed elsewhere in this paper, but we hope that they may prove useful in addressing
related problems.

We first mention a striking result, conjectured by Ohno \cite{O} and then proved by Nakagawa \cite{N}. They established
that the dual Shintani zeta functions are related to the original Shintani zeta functions by the simple formulas
\begin{equation}
\widehat{\xi}^+(s) = 3^{-3s} \xi^-(s),
\end{equation}
\begin{equation}
\widehat{\xi}^-(s) = 3^{1 - 3s} \xi^+(s).
\end{equation}
One can incorporate these formulas into Datskovsky and Wright's diagonalization, and therefore put the classical 
Shintani zeta functions into a self-dual form, with functional equations related to the ones above.

More recently, Ohno, the first author, and Wakatsuki \cite{OTW, OT} classified all of the $\SL_2(\Z)$-invariant sublattices
of $V_{\Z}$, and proved that the Shintani zeta functions associated to these lattices share the nice properties above.

There is also the work of Yukie \cite{Y}, who has initiated the study of {\itshape quartic} Shintani zeta functions,
which are associated to a certain 12-dimensional prehomogeneous vector space. These zeta functions have not yet
been studied as thoroughly as their cubic analogues, but it seems that one may be able to prove estimates for quartic
fields with power saving error terms, and perhaps improve the result of Belabas, Bhargava, and Pomerance \cite{B_quartic, BBP}. Moreover, if any
secondary terms are present, this method seems likely to yield them, at least in principle. However, this approach comes
with substantial technical difficulties, and so far it has yet to even yield the main term. 

We may also study extensions of base fields other than $\Q$. In \cite{DW3}, Datskovsky and Wright proved the analogue
of the Davenport-Heilbronn theorem for any global field of characteristic not equal to 2 or 3. They also suggest
that secondary terms should appear in this case as well. Moreover, Morra \cite{M} has designed and implemented an algorithm
to compute cubic extensions of imaginary quadratic fields of class number 1. At present we have verified that Morra's
calculations closely match the Datskovsky-Wright heuristics for extensions of $\Q(i)$.

In principle we expect to be able to prove an analogue of Roberts' conjecture in this general setting. However, 
we expect that our error terms would be larger than $X^{5/6}$, even for cubic extensions of quadratic fields. However, one may be able to establish secondary
terms for {\itshape smoothed} sums, such as
$$\sum_K |\Disc(K)| \exp^{-|\Disc(K)| / X},$$
where $K$ ranges over cubic extensions of a fixed number field. We look forward to investigating this in the near future.

There is also a much more general theory of prehomogeneous vector spaces and their zeta functions, developed in the seminal
works of Sato-Kimura \cite{SK}, Sato-Shintani \cite{SS}, and Wright-Yukie \cite{WY}, among many others. Many authors have applied this
theory to obtain a variety of interesting arithmetic density results, and it is possible that the methods of this paper might be applied
to further refine some of these results. As one example we mention work of the first author \cite{tani_simple}, studying
the zeta functions associated to some non-split forms of representations of $\GL_2(k) \times \GL_n(k)^2$ on the space $k^2 \otimes k^n \otimes k^n$
for $n = 2$ or $3$. These zeta function are proved (\cite{tani_simple}, Theorem 4.24) to be meromorphic with two simple poles. In the case
$n = 2$, this yields an arithmetic density result for the average size of $(h_F R_F)^2$, where $F$ ranges over quadratic extensions
of $k$, and $h_F$ and $R_F$ denote the class number and regulator respectively. For $n = 3$, this work is incomplete, but similar
methods should yield a result for the average size of $h_F R_F$, where $F$ now ranges over cubic extensions of $k$. Moreover,
in the cubic case it can be shown that 
the secondary pole of the zeta function does not vanish when twisted by cubic characters. This suggests that an analogue of Theorem
\ref{thm_progressions} might hold for this case as well.

Finally, the methods of this paper may be used to prove statements about prime and almost-prime discriminants of cubic
fields. When one replaces the $q$-nonmaximality condition with a divisibility condition on the discriminant,
the methods of this paper yield estimates for the number of discriminants divisible by $q$, and combining these estimates with
different sieve methods allows us to prove a variety of results. However, we were unable to improve upon results of Belabas
and Fouvry \cite{BF}, and so we did not pursue this further.

\section{Bounds for duals of the $q$-nonmaximal Shintani zeta function}\label{section_dual_bounds}

Let 
$$\widehat{\xi}^{\pm}_q(s) =: \sum_{\mu_n} b^{\pm}_q(\mu_n) {\mu_n}^{-s}$$
be the dual $q$-nonmaximal Shintani zeta functions, defined in \eqref{def_shintani_dual}.
Throughout, we will fix a choice of sign and drop the $\pm$ from our notation. We also recall
that the sum is over $\mu_n \in \frac{1}{q^8} \Z$.

Our later analytic estimates will require bounds for partial sums of the $b_q(\mu_n)$.
The primary goal of this section will be to prove the following bound.

\begin{theorem}\label{thm_trivial_bound}
We have the bound
\begin{equation}\label{eqn_trivial_bound}
\sum_{\mu_n < X} |b_q(\mu_n)| \ll q^{1 + \epsilon} X,
\end{equation}
uniformly for all $q$ and $X$.
\end{theorem}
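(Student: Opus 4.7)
The starting point is the triangle inequality applied to the orbit representation
$$b^\pm_q(\mu_n)=\sum_{\substack{[x]\in\SL_2(\Z)\backslash\widehat V_\Z\\|\Disc(x)|=q^8\mu_n}}\frac{\widehat\Phi_q(x)}{|\Stab(x)|},$$
which, after dropping the stabilizer factor (always $\geq 1$), yields
$$\sum_{\mu_n<X}|b^\pm_q(\mu_n)|\;\leq\;\sum_{\substack{[x]\in\SL_2(\Z)\backslash\widehat V_\Z\\|\Disc(x)|<q^8X}}|\widehat\Phi_q(x)|.$$
Since $\widehat\Phi_q(x)$ depends only on $x$ modulo $q^2$, the plan is to regroup the orbit sum by residue class $r\in\widehat V_\Z/q^2\widehat V_\Z$, writing it as $\sum_r|\widehat\Phi_q(r)|\,N(r,q^8X)$, where $N(r,Y)$ counts $\SL_2(\Z)$-orbits in $\widehat V_\Z$ with a representative $\equiv r\pmod{q^2}$ and $|\Disc(x)|<Y$.

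The counting step is handled by a uniform geometry-of-numbers argument in the spirit of Davenport-Heilbronn (as refined in \cite{BBP}), yielding $N(r,q^8X)\ll X$ for every $r$, with error terms of lower order. This reduces the theorem to the $L^1$ bound
$$\sum_{r\in\widehat V_\Z/q^2\widehat V_\Z}|\widehat\Phi_q(r)|\;\ll\;q^{1+\epsilon},$$
which by the multiplicativity \eqref{eqn_mult} factorizes over primes and further reduces to the prime statement $\sum_{r\in V_{\Z/p^2\Z}}|\widehat\Phi_p(r)|\ll p^{1+\epsilon}$.

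The prime-level $L^1$ bound is the main obstacle and is the core content of the cubic Gauss sum analysis in the companion paper \cite{TT}. Soft arguments fall short: since the nonmaximal set in $V_{\Z/p^2\Z}$ has size $\sim p^6$, one obtains the pointwise estimate $|\widehat\Phi_p(r)|\ll p^{-2+\epsilon}$ and Plancherel gives $\|\widehat\Phi_p\|_2^2\ll p^{-2+\epsilon}$. Summing pointwise over all $p^8$ residue classes gives only $\|\widehat\Phi_p\|_1\ll p^{6+\epsilon}$, and even Cauchy-Schwarz improves this merely to $p^{3+\epsilon}$, producing the far-too-weak estimate $q^{3+\epsilon}X$. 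Closing the remaining factor of $p^2$ requires genuine cancellation in the cubic Gauss sum, exploiting the algebraic structure of the nonmaximality locus of cubic forms modulo $p^2$. A natural intermediate target is to show that $\widehat\Phi_p$ is supported on $\ll p^{4+\epsilon}$ residue classes, whereupon Cauchy-Schwarz on the support gives the desired bound; this is the kind of structural refinement that the character-sum analysis in \cite{TT} is designed to supply. Assembling these pieces then yields $\sum_{\mu_n<X}|b^\pm_q(\mu_n)|\ll q^{1+\epsilon}X$.
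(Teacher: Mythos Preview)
Your approach is genuinely different from the paper's, but both main steps have gaps.

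For the counting step, the uniform bound $N(r,q^8X)\ll X$ does not follow from \cite{BBP} as you suggest. The results there (and the paper's Lemma~\ref{lemma_BBP}) bound the number of orbits satisfying specific $\GL_2$-invariant ring-theoretic conditions such as ``$r^2\mid\Disc$'', not orbits with a representative in an arbitrary residue class modulo $q^2$. Counting lattice points in the cuspidal fundamental domain for $\SL_2(\Z)$ on $V_\R$ subject to a congruence of covolume $q^8$ has error terms that are not obviously $O(X)$ uniformly in $r$; this is precisely the sort of uniformity that the geometry-of-numbers literature works hard to establish, and only for structured conditions.

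For the $L^1$ step, your proposed route---support on $\ll p^{4+\epsilon}$ residue classes, then Cauchy--Schwarz---fails. By Lemma~\ref{lem_gauss_sum}, $\widehat\Phi_p$ is nonzero on every residue class corresponding to a maximal totally ramified ring, and these have density $\asymp p^{-2}$ in $V_{\Z/p^2\Z}$, hence $\asymp p^6$ classes. Cauchy--Schwarz with $\|\widehat\Phi_p\|_2\asymp p^{-1}$ then gives only $\|\widehat\Phi_p\|_1\ll p^2$. The bound $\|\widehat\Phi_p\|_1\ll p$ is in fact true, but proving it requires exactly the type-by-type analysis of Lemma~\ref{lem_gauss_sum}: the $\asymp p^6$ totally-ramified classes carry the small value $p^{-5}$ and contribute $\ll p$, the content-$p$ classes contribute $\ll p$, and so on. Once you do this you have essentially reproduced the paper's argument at the residue level rather than the orbit level, and you still owe step~(a).

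The paper avoids both issues by never passing to residue classes. It partitions $\SL_2(\Z)$-orbits directly by ring-theoretic type at each $p\mid q$ (the factorization $q=abcd$ in the proof), uses Lemma~\ref{lem_gauss_sum} for the pointwise size of $\widehat\Phi_q$, passes from nonmaximal rings to index-$p$ overrings via Lemma~\ref{lemma_overrings}, and then invokes the BBP count of cubic rings with $r^2\mid\Disc$ (Lemma~\ref{lemma_BBP}). The orbit count with the required $q$-savings is thus obtained from ring structure, not from equidistribution in residue classes.
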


The proof essentially involves two steps. The first is an analysis of the Gauss sums $\widehat{\Phi}_q(x)$, carried out in
\cite{TT}. Our 
analysis (see Lemma \ref{lem_gauss_sum}) shows that the Gauss sums are only supported on certain $\GL_2(\Z / q^2 \Z)$-orbits of
$V_{\Z / q^2 \Z}$, and in particular that cubic rings contributing to \eqref{eqn_trivial_bound} must be either nonmaximal
or totally ramified at each prime dividing $q$. 

In the second step, we use a counting argument to bound the contribution of each orbit type, largely
following work of
Belabas, Bhargava, and Pomerance \cite{BBP}.

Before presenting the details, we derive the bounds that we will need later.

\begin{proposition}\label{prop_high_middle_bound}
For any $z$ and any $\delta > 1$, we have the bound
\begin{equation}\label{eqn_high_bound}
\sum_{\mu_n > z} |b_q(\mu_n)| {\mu_n}^{-\delta} \ll_{\delta} q^{1 + \epsilon} z^{-\delta + 1}.
\end{equation}
Furthermore, for any $\delta \in (0, 1)$, we have the bound
\begin{equation}
\sum_{\mu_n < z} |b_q(\mu_n)| {\mu_n}^{-\delta} \ll_{\delta} q^{1 + \epsilon} z^{-\delta + 1}.
\end{equation}
Both bounds are uniform in $q$.
\end{proposition}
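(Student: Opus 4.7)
Both bounds should follow from Theorem \ref{thm_trivial_bound} by a standard Abel summation / dyadic decomposition argument. Write $A(X) := \sum_{\mu_n \leq X} |b_q(\mu_n)|$, so that Theorem \ref{thm_trivial_bound} supplies the uniform bound $A(X) \ll q^{1+\epsilon} X$. Everything below is just a weighted rearrangement of this estimate, so I expect no serious obstacle; the only minor wrinkle is that the $\mu_n$ live in $\frac{1}{q^8}\Z$ rather than $\Z$, but that will not matter because the dyadic argument only uses the size of $A$, not the spacing of the $\mu_n$.

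For the tail bound ($\delta > 1$), the plan is to split the range $\mu_n > z$ into dyadic shells $(2^k z, 2^{k+1} z]$ for $k \geq 0$. On each shell I replace $\mu_n^{-\delta}$ by its maximum value $(2^k z)^{-\delta}$ and use Theorem \ref{thm_trivial_bound} to bound the contribution of the coefficients by $A(2^{k+1} z) \ll q^{1+\epsilon} 2^{k+1} z$. This gives a contribution of $\ll q^{1+\epsilon} z^{1-\delta} \cdot 2^{k(1-\delta)}$ from the $k$-th shell, and since $\delta > 1$ the geometric series $\sum_{k \geq 0} 2^{k(1-\delta)}$ converges (with sum depending only on $\delta$). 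This yields \eqref{eqn_high_bound}.

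For the head bound ($0 < \delta < 1$), the cleanest route is Abel summation directly:
\begin{equation*}
\sum_{\mu_n \leq z} |b_q(\mu_n)| \, \mu_n^{-\delta}
= A(z) z^{-\delta} + \delta \int_{0^+}^{z} A(t) \, t^{-\delta-1} \, dt.
\end{equation*}
The boundary term is $\ll q^{1+\epsilon} z^{1-\delta}$ immediately, and for the integral I substitute $A(t) \ll q^{1+\epsilon} t$ to get an integrand $\ll q^{1+\epsilon} t^{-\delta}$; since $\delta < 1$ this is integrable near $0$, and the integral evaluates to $\ll_{\delta} q^{1+\epsilon} z^{1-\delta}$. (Equivalently one could run the dyadic argument backwards, splitting into shells $(2^{-k-1} z, 2^{-k} z]$ down to $k$ with $2^{-k} z \sim 1/q^8$; the geometric series $\sum_k 2^{-k(1-\delta)}$ converges in the opposite direction when $\delta < 1$, giving the same bound.)

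The only thing to double-check is that the implied constants really are uniform in $q$: this is automatic because the only input involving $q$ is the bound $A(X) \ll q^{1+\epsilon} X$ of Theorem \ref{thm_trivial_bound}, whose $q$-dependence is built into the $q^{1+\epsilon}$ factor and factored out cleanly at every step of both arguments. So both inequalities in Proposition \ref{prop_high_middle_bound} reduce to the single trivial bound together with elementary summation by parts, and no further input from the Gauss sum analysis of Section \ref{section_dual_bounds} is required.
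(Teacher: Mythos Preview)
Your proposal is correct and matches the paper's own proof, which likewise reduces both bounds to Theorem \ref{thm_trivial_bound} via a dyadic decomposition into intervals $[y,2y]$. Your Abel summation for the second bound is an equivalent variant of the same idea; no further input is needed.
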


\begin{proof}
To prove these bounds, we divide the respective intervals into dyadic subintervals of the form $[y, 2y]$.
By \eqref{eqn_trivial_bound}, the contribution of each such interval is $\ll q^{1 + \epsilon} y^{-\delta + 1}$.
Both bounds now follow by summing over $y$.
\end{proof}

To prepare for the proof of Theorem \ref{thm_trivial_bound}, we restate \eqref{eqn_trivial_bound}
in the form
\begin{equation}\label{eqn_to_be_proved}
\sum_{|\Disc(x)| < Y} |\widehat{\Phi}_q(x)| \ll q^{-7 + \epsilon} Y,
\end{equation}
where the sum is over integral binary cubic forms up to $\GL_2(\Z)$-equivalence. In light of the Delone-Faddeev
correspondence, we may (and do) refer to the $x$ as either cubic forms or cubic rings. We will find it convenient
to talk about divisibility (i.e., content) in terms of forms, and maximality properties in terms of rings.

Exact formulas for $\widehat{\Phi}_q(x)$ are proved in \cite{TT}, and the following lemma extracts the results we need:
\begin{lemma}\label{lem_gauss_sum}\cite{TT}
The function $\widehat{\Phi}_q(x)$ is multiplicative in $q$. Moreover, for a prime $p > 3$, 
the value of $\widehat{\Phi}_p(x)$ is given by the following table (where $R$ is the cubic ring corresponding
to $x$):
\begin{itemize}
\item {\upshape (Content $p^2$:)} $\widehat{\Phi}_p(x) = p^{-2} + p^{-3} - p^{-5}$ if $p^2$ divides the content of $R$.
\item {\upshape (Content $p$:)} $|\widehat{\Phi}_p(x)| < p^{-3}$ if $p$ divides the content of $R$, but $p^2$ does not.
\item {\upshape (Divisible by $p^4$:)} $\widehat{\Phi}_p(x) = p^{-3} - p^{-5}$ if $R$ is nonmaximal at $p$, has content
coprime to $p$, and $p^4 | \Disc(R)$.
\item {\upshape (Divisible by $p^2$:)} $|\widehat{\Phi}_p(x)| = p^{-5}$ for certain other rings for which $p^2 | \Disc(R)$. (In particular,
whenever $R \otimes_{\Z} \Q$ is totally ramified at $p$ and $R$ does not belong to any of the first three
categories.)
\item Otherwise, and in particular if
$p^2 \nmid \Disc(R)$, we have
  $\widehat{\Phi}_p(x) = 0$.
\end{itemize}
\end{lemma}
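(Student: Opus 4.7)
The multiplicativity property, already noted in \eqref{eqn_mult}, is immediate from \eqref{def_phihat} together with the Chinese Remainder decomposition $V_{\Z/(qq')^2\Z} \cong V_{\Z/q^2\Z} \times V_{\Z/q'^2\Z}$: both $\Phi_{qq'}$ and the additive character $\exp(2\pi i [x,y]/(qq')^2)$ factor across this product. The calculation therefore reduces to evaluating $\widehat{\Phi}_p(x)$ at each prime $p>3$. For this I would exploit the $\GL_2(\Z/p^2\Z)$-invariance of $\Phi_p$ (the defining condition in Proposition \ref{def_up} is $\GL_2$-invariant and depends only on $x \bmod p^2$) together with the compatibility of the pairing $[x,y]$ with the $\GL_2$-action to reduce to a finite list of orbit representatives on the dual side $\widehat{V}_{\Z/p^2\Z}$, indexed by the isomorphism class of $R \otimes_{\Z} \Z_p$ modulo $p^2$. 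The five bullets of the lemma correspond to distinct orbit types, so one only needs to treat a single representative per type.

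The content cases are then largely direct. If $p^2$ divides the content of $x$, the exponential $\exp(2\pi i [x,y]/p^2)$ is identically $1$ on $V_{\Z/p^2\Z}$, and $\widehat{\Phi}_p(x)$ collapses to the density of non-$U_p$ forms modulo $p^2$; the standard Davenport--Heilbronn count of nonmaximal orbits (as in \cite{DH,B}) gives the stated value $p^{-2}+p^{-3}-p^{-5}$. If $p$ exactly divides the content of $x$, write $x=px'$ with $p \nmid \text{content}(x')$, so the phase becomes $\exp(2\pi i [x',y]/p)$ and depends only on $\bar y = y \bmod p$. Interchanging sums, evaluating the fiber count of nonmaximal $y \in V_{\Z/p^2\Z}$ lifting each $\bar y$, and exploiting cancellation in the resulting character sum over $\bar y \in V_{\Z/p\Z}$ yields the strict bound $|\widehat{\Phi}_p(x)|< p^{-3}$ rather than a closed form.

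The remaining two cases, in which $x$ is primitive modulo $p$ but the associated ring $R$ satisfies $p^2\mid \Disc(R)$, are the main obstacle. Here the phase does not trivialize, and one must genuinely evaluate a Gauss-type sum over $V_{\Z/p^2\Z}$ restricted to the nonmaximal locus. The natural approach is to use the second characterization of $U_p$ in Proposition \ref{def_up} to stratify $y = a u^3 + b u^2 v + cuv^2 + d v^3$ by its $\GL_2(\Z/p^2\Z)$-orbit, passing to a representative with $p^2 \mid a$ and $p \mid b$, and then to perform the $d$-, $c$-, $b$-, $a$-sums in turn. The two outer sums reduce to complete additive characters and force divisibility conditions on the coordinates of $x$; the remaining inner sum is a one-variable Gauss-type sum that can be evaluated in closed form. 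The combinatorially delicate part is tracking which $\GL_2$-orbits of $x$ survive these divisibility conditions, and thereby showing that the answer depends only on whether $p^4 \mid \Disc(R)$, whether $R \otimes \Q$ is totally ramified, or neither; this matching of orbit data with ramification invariants is the substance of the calculation in the companion paper \cite{TT}.
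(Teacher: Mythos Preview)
The paper does not actually prove this lemma; it is quoted from the companion paper \cite{TT}, and the surrounding text in Section~\ref{section_dual_bounds} simply says ``Exact formulas for $\widehat{\Phi}_q(x)$ are proved in \cite{TT}, and the following lemma extracts the results we need.'' So there is no proof here to compare against, and your proposal is in fact more detailed than anything the present paper offers.

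That said, your outline is a reasonable description of how such a calculation proceeds, and it matches the spirit of what is done in \cite{TT}. Your treatment of the multiplicativity and of the content-$p^2$ case is correct and complete: when $p^2 \mid \ctt(x)$ the phase trivializes and one is left with the nonmaximal density $1-(1-p^{-2})(1-p^{-3}) = p^{-2}+p^{-3}-p^{-5}$, exactly as you say. Your reduction in the content-$p$ case is also the right move. For the primitive cases you correctly identify that the heart of the matter is an explicit Gauss-sum computation over the nonmaximal locus in $V_{\Z/p^2\Z}$, organized by $\GL_2(\Z/p^2\Z)$-orbits on both sides, and you are honest that carrying this out and matching the answer to the ramification type of $R\otimes\Z_p$ is where the real work lies. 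One small caution: the stratification you describe (``passing to a representative with $p^2\mid a$ and $p\mid b$'') is a description of the nonmaximal $y$'s up to $\GL_2$-action, not a single orbit, so the inner sum you arrive at still runs over several orbit types on the $y$-side; in practice the computation in \cite{TT} handles this by a somewhat finer orbit decomposition than your sketch suggests. But as a roadmap your proposal is sound, and it ends in the same place the paper does: deferring the explicit tables to \cite{TT}.
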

A few remarks are in order. We have excluded $p = 2, 3$, but for these two primes 
we may incorporate the trivial estimate 
$|\widehat{\Phi}_p(x)| \leq 1$ into our implied constants. Also, we recall \eqref{eqn_mult} and the note
afterwards, which will be used in our proof. Finally, we note that
any $R$ with discriminant divisible by $p^4$ ($p > 3$) is in fact automatically nonmaximal at $p$, so that
there is some redundancy in the conditions described above.

The basic idea of the proof is to separate the contributions to \eqref{eqn_trivial_bound} according to
the list above, and then count the number of each type of ring. If we could prove that the number of cubic
rings $R$ for which $d | \Disc(R)$ and $|\Disc(R)| < X$ was $\ll X/d^{1 - \epsilon}$, uniformly
in $X$ and $d$, then the theorem would quickly follow. This seems to be difficult in general, but
we will be able to prove an adequate substitute. 
We begin with the case where $d = r^2$ for squarefree $r$, where
Belabas, Bhargava, and Pomerance \cite{BBP} proved the inequality described above:
\begin{lemma}\label{lemma_BBP}
For squarefree $r$, we have the bound
\begin{equation}\label{eqn_to_be_proved_base}
\sum_{\substack{|\Disc(x)| < Y \\ r^2 | \Disc(x)}} 1 < M 6^{\omega(r)} Y/r^2,
\end{equation}
for an absolute constant $M$.
\end{lemma}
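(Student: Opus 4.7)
The plan is to reduce the count to a local problem at each prime dividing $r$, and then to invoke the lattice-point counting machinery already developed by Davenport--Heilbronn and sharpened by BBP. First, by the Delone-Faddeev correspondence the sum on the left-hand side counts $\GL_2(\Z)$-equivalence classes of integral binary cubic forms $x \in V_\Z$ with $|\Disc(x)| < Y$ and $r^2 \mid \Disc(x)$. Since the discriminant is a polynomial in the coefficients of $x$, the divisibility condition $r^2 \mid \Disc(x)$ is a congruence condition on the coefficients of $x$ modulo $r^2$, and by the Chinese Remainder Theorem this decouples as a product of conditions $p^2 \mid \Disc(x)$ modulo $p^2$, for each prime $p \mid r$.

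The key local input is to bound, for each prime $p$, the number of residue classes in $V_{\Z/p^2\Z}$ on which $p^2$ divides the discriminant. A direct count (or equivalently, identification of the relevant $\GL_2(\Z/p^2\Z)$-orbits which correspond to $p$-adic cubic rings with $p^2 \mid \Disc$) shows that this density is at most $c/p^2$ for an absolute constant $c$. Writing out the orbits explicitly via the standard $p$-adic parametrization of cubic rings---namely the orbits corresponding to rings non-maximal at $p$ together with those corresponding to rings totally or partially ramified at $p$---one finds that $c$ can be taken to be $6$ at each prime. By multiplicativity, the density of the global condition is at most $6^{\omega(r)}/r^2$.

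The remaining step is to count the lattice points in the sublattice of $V_\Z$ cut out by these congruence conditions that lie in a fundamental domain $\FF \subset V_\R$ for $\GL_2(\Z)$ with $|\Disc| < Y$. For a generic sublattice of index $m$ in $V_\Z$, the work of Davenport (and its quantitative refinements used in \cite{DH, BBP}) shows that the number of such points is $O(Y/m)$, where the implied constant is absolute. Applied with $m = r^2/6^{\omega(r)}$ (after accounting for the allowed residue classes), this gives exactly the bound $M \cdot 6^{\omega(r)} Y/r^2$ claimed.

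The one technical obstacle is, as usual, the contribution of the cuspidal region of $\FF$, where lattice points accumulate but mostly correspond to reducible forms. Here we may invoke the treatment in \cite{BBP} directly: their argument shows that even in the cusp the count is controlled by $O(Y/r^2)$ with a uniform constant, using the fact that the congruence conditions survive under the $\GL_2(\Z)$-action on the cuspidal region. Since the lemma is exactly a statement proved in \cite{BBP}, the cleanest route is simply to quote it; the outline above indicates why the bound takes the shape it does and where the factor $6^{\omega(r)}$ enters.
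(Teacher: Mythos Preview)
Your outline has a genuine gap: the geometry-of-numbers argument you sketch does not give the required \emph{uniformity} in $r$. The condition $r^2\mid\Disc(x)$ is not a single sublattice of $V_\Z$; it is a union of roughly $6^{\omega(r)}r^6$ cosets of $r^2 V_\Z$ inside $V_{\Z/r^2\Z}$. Davenport's lattice-point lemma, applied coset by coset to the region $\{|\Disc|<Y\}$ in a fundamental domain, produces for each coset a main term $O(Y/r^8)$ together with an error term coming from lower-dimensional projections of the region. These error terms do \emph{not} shrink with $r$; summing them over all the cosets yields a contribution of size roughly $r^{6}$ times a fixed power of $Y$, which swamps the main term once $r$ is at all large relative to $Y$. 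So the assertion that ``the number of such points is $O(Y/m)$, where the implied constant is absolute'' is exactly what fails, and no refinement of the cusp analysis repairs it.

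The proof in \cite{BBP} (reproduced in the paper) avoids geometry of numbers entirely and is genuinely arithmetic. One writes $r=ab$ and, for each such factorization, counts cubic rings by first choosing the maximal overring $R$ with $b^2\mid\Disc(R)$ and then the subring $R'\subseteq R$ of index divisible by $a$. The first count is $\ll Y\,3^{\omega(b)}/b^2$ uniformly, and this step uses class field theory (Lemma~3.3 of \cite{BBP}); it is precisely this arithmetic input that delivers the uniform $1/b^2$ saving that lattice-point counting cannot. The second count is controlled by the Datskovsky--Wright coefficientwise bound $\eta_R(s)\preceq\zeta(2s)\zeta(3s-1)\zeta(s)^3$ for the subring zeta function, which gives a convergent factor independent of $R$. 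Summing over the $2^{\omega(r)}$ factorizations $r=ab$ produces the $6^{\omega(r)}$. If you want to salvage your write-up, you should replace the Davenport-style paragraph with this two-step algebraic decomposition; simply quoting \cite{BBP} at the end does not rescue an outline whose mechanism is different from, and weaker than, theirs.
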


\begin{proof} For those $x$ corresponding to cubic orders, this is Lemma 3.4 of \cite{BBP}, and 
for general $x$ it may be proved in the same manner. We briefly recall the details.\footnote{We
refer to the published version of \cite{BBP}, which offers a different proof than some preprints of \cite{BBP}.} 

For any factorization $r = ab$, we count the number
of maximal cubic rings $R$ with discriminant divisible by $b^2$, and then the number
of cubic rings $R'$ contained in $R$ with index divisible by $a$. We then obtain
\eqref{eqn_to_be_proved_base} by summing over all such factorizations.

The count of maximal cubic rings is $\ll Y 3^{\omega(b)}/b^2$ by Lemma 3.3 of \cite{BBP} in the irreducible
case, and the same bound follows trivially for the reducible case 
(as there is at most one such ring of any given
discriminant).

Now for any fixed maximal cubic ring $R$, write
$\eta_{R}$ for the generating series counting the subrings of index $n$ contained in $R$.
By work of Datskovsky-Wright (\cite{DW2}, Theorem 6.1), we have the coefficientwise bound
\begin{equation}
\eta_{R}(s) \preceq \zeta(2s) \zeta(3s - 1) \zeta(s)^3.
\end{equation}
Arguing exactly as in \cite{BBP}, the number of cubic rings being counted is
\begin{equation}
\ll Y \frac{3^{\omega(ab)}}{(ab)^2} \sum_{j \geq 1} \frac{3^{\Omega(j)}}{j^2}
\sum_{v, w: \ v w^2 | j} w.
\end{equation}
The sum over $j$ converges, and one concludes the argument by
summing over the $2^{\omega(r)}$ choices of $a$ and $b$.
\end{proof}
We would like an analogue of \eqref{eqn_to_be_proved_base} where $r$ is not required to be squarefree.
The methods of \cite{BBP} do not extend to this case, but 
Lemma \ref{lem_gauss_sum} gives us the additional information that any such rings occurring in
\eqref{eqn_to_be_proved}
are nonmaximal. This allows us to apply the following lemma:

\begin{lemma}\label{lemma_overrings}
Let $R$ be a cubic ring which is nonmaximal at $p$, and whose content is coprime to $p$. Then
$R$ is contained in an overring $R'$ with index $p$.
There are at most $3$ rings $R$ so contained in any $R'$ whose content is coprime to $p$,
and $p + 1$ rings otherwise.
\end{lemma}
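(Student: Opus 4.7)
The plan is to prove existence by an explicit construction via the Delone--Faddeev correspondence, and to reduce the counting assertion to a classification of the possible isomorphism types of the $3$-dimensional commutative $\F_p$-algebra $A := R'/pR'$.

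For existence, I would appeal to Proposition~\ref{def_up} to choose a $\GL_2(\Z)$-representative $f = au^3 + bu^2v + cuv^2 + dv^3$ of $R$ with $p^2 \mid a$ and $p \mid b$; the content hypothesis on $R$ is $\GL_2(\Z)$-invariant and so is preserved. Writing $R = \Z \oplus \Z\omega \oplus \Z\theta$ with the Delone--Faddeev multiplication $\omega^2 = -ac + b\omega - a\theta$, $\theta^2 = -bd + d\omega - c\theta$, $\omega\theta = -ad$, I set $R' := \Z \oplus \Z(\omega/p) \oplus \Z\theta$. This is a priori a lattice in $R \otimes \Q$ containing $R$ with index $p$, and the divisibilities $p^2 \mid a$ and $p \mid b$ imply that the products $(\omega/p)^2$ and $(\omega/p)\theta$ have $\Z$-coordinates in the basis $\{1, \omega/p, \theta\}$ (the product $\theta^2 \in R \subseteq R'$ being automatic), so $R'$ is a cubic ring containing $R$ with index $p$.

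For the counting, I set $A := R'/pR'$ and claim a bijection between subrings $R \subsetneq R'$ with $[R':R] = p$ and $2$-dimensional $\F_p$-subalgebras $B \subseteq A$ containing $\bar 1$. Any sublattice of $R'$ of index $p$ automatically contains $pR'$, so the forward map $R \mapsto R/pR'$ is well-defined; the subring and subalgebra conditions translate into one another directly, and the inverse map is taking the preimage in $R'$. Thus the problem reduces to counting such $B$, and this count depends only on the isomorphism type of $A$ as a $3$-dimensional commutative $\F_p$-algebra.

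A direct computation from the multiplication table shows that $R'$ has content divisible by $p$ if and only if the images $\bar\omega$ and $\bar\theta$ in $A$ span a $2$-dimensional ideal $N$ with $N^2 = 0$, so that $A \cong \F_p \oplus N$. In that case every $1$-dimensional subspace $L \subseteq N$ yields a subalgebra $\F_p \oplus L$ (automatic since $L^2 \subseteq N^2 = 0$), giving exactly $p + 1$ subalgebras. Otherwise $A$ is isomorphic to one of $\F_{p^3}$, $\F_{p^2} \times \F_p$, $\F_p^3$, $\F_p[x]/(x^3)$, or $\F_p[x]/(x^2) \times \F_p$; parameterizing $B = \F_p \cdot \bar 1 + \F_p \cdot v$ and solving $v^2 \in B$ in each case yields respectively $0, 1, 3, 1, 2$ subalgebras, giving at most $3$. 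The only step that is not routine is establishing the ``content divisible by $p$ $\iff$ $N^2 = 0$'' dichotomy; this becomes transparent from the multiplication table once the basis is fixed.
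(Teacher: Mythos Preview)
Your existence argument is essentially identical to the paper's: choose a representative with $p^2\mid a$, $p\mid b$ via Proposition~\ref{def_up} and set $R' = \Z \oplus \Z(\omega/p) \oplus \Z\theta$. (Your multiplication table carries opposite signs on the $\omega$- and $\theta$-coefficients compared to the paper's; this is a harmless convention difference and the divisibility check goes through unchanged.)

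For the counting, the paper simply cites Lemma~2.4 of \cite{BBP}, whereas you supply a self-contained argument: the bijection between index-$p$ subrings of $R'$ and $2$-dimensional unital subalgebras of $A = R'/pR'$, followed by a case analysis over the six isomorphism types of $3$-dimensional commutative $\F_p$-algebras. This is correct and is in fact essentially what the BBP argument amounts to, so you are filling in a citation rather than taking a different route. One point worth making explicit in your write-up is the backward direction of the dichotomy: if $A \cong \F_p \oplus N$ with $N^2 = 0$, you need $\bar\omega,\bar\theta \in N$ to read off $p\mid a',b',c',d'$ from the multiplication table. This is not automatic from the algebra structure alone, but follows because the Delone--Faddeev normalization forces $\bar\omega\bar\theta \in \F_p\cdot\bar 1$; writing $\bar\omega = \alpha + n_1$, $\bar\theta = \beta + n_2$ with $n_i \in N$, the product $\alpha\beta + \alpha n_2 + \beta n_1$ lies in $\F_p\cdot\bar 1$ only if $\alpha = \beta = 0$. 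With that step spelled out, your argument is complete.
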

\begin{proof}
Suppose that $x = (a, b, c, d) \in V_{\Z}$ is an element of the $\GL_2(\Z)$-orbit corresponding to $R$. By
Proposition \ref{def_up}, we may assume that $p^2 | a$ and $p | b$. Then, by the explicit form of the
Delone-Faddeev correspondence (see, e.g., \cite{GGS}) we can write $R$
as $R = \Z 1 \times \Z \omega \times \Z \theta$, where
\begin{equation}
\omega^2 = -ac - b\omega + a \theta, \ \
\theta^2 = - bd - d \omega + c \theta, \ \
\omega \theta = - ad.
\end{equation}
Let $R' := \Z 1 \times \Z (\omega/p) \times \Z \theta \subseteq R \otimes_{\Z} \Q$. Then, $R'$ is closed under multiplication because
\begin{equation}
(\omega/p)^2 = -(a/p^2)c - (b/p)(\omega/p) + (a/p^2)\theta, \ \ 
\theta^2 = -bd - dp(\omega/p) + c\theta, \ \
(\omega/p)\theta = -(a/p)d.
\end{equation}
It follows that $R'$ is a cubic ring which contains $R$ with index $p$, and that both are 
contained in the same maximal ring. 

The bound on the number of such $R$ is proved in 
Lemma 2.4 of \cite{BBP}; although this lemma is stated for cubic orders, its proof remains valid for any cubic ring.
\end{proof}

\begin{remark} In terms of cubic forms, this construction sends $(p^3 a, p^2 b, p c, d)$ to $(pa, pb, pc, pd)$,
which illustrates that a ring with trivial content can be contained in a ring with content $p$.
\end{remark}

\begin{proof}[Proof of Theorem \ref{thm_trivial_bound}] 
For each factorization $q = abcd$, consider the contribution to
\eqref{eqn_to_be_proved} from those $x$ satisfying the following:
\begin{itemize}
\item
If $p| a$, then $p^2|\ctt(x)$;
\item
if $p| b$, then $p^2\nmid\ctt(x)$ but $p|\ctt(x)$;
\item
if $p| c$, then $p\nmid\ctt(x)$ but $p^4|\Disc(x)$;
\item
if $p| d$, then $p^4\nmid\Disc(x)$ but $p^2|\Disc(x)$.
\end{itemize}
(Here $\ctt(x)$ denotes the content of $x$.) Lemma \ref{lem_gauss_sum} implies that for each $x$ we have
\begin{equation}
|\widehat{\Phi}_q(x)| \leq \frac{2^{\omega(a)}}{a^2 b^3 c^3 d^5}.
\end{equation}
We use Lemma \ref{lemma_overrings} to replace each $x$ by an overring $x'$
of index $c$. For each $x'$, we define $c' | c$ by $c' = \gcd(\ctt(x'), c)$, such that there are at
most $3^{\omega(c)} c'$ rings $x$ corresponding to each overring $x'$. We note also that the discriminant of
each $x'$ is divisible by $(\frac{cd}{c'})^2$. (It is also divisible by factors which divide the content.)

For each choice of $a, b, c, c', d$, the contribution to \eqref{eqn_to_be_proved} is therefore
\begin{equation}
\leq \frac{2^{\omega(a)} 3^{\omega(c)} c'}{a^2 b^3 c^3 d^5}
\sum_{\substack{|\Disc(x')| < Y/c^2 \\ a^2 b c' | \ctt(x') \\ ( \frac{cd}{c'} )^2 | \Disc(x')}} 1
= 
\frac{2^{\omega(a)} 3^{\omega(c)} c'}{a^2 b^3 c^3 d^5}
\sum_{\substack{|\Disc(x')| < \frac{Y}{a^8 b^4 c'^4 c^2} \\  ( \frac{cd}{c'} )^2 | \Disc(x')}} 1.
\end{equation}
By Lemma \ref{lemma_BBP}, this is
\begin{equation}
\ll
\frac{2^{\omega(a)} 3^{\omega(c)} c'}{a^2 b^3 c^3 d^5}
\cdot 6^{\omega(cd)} \frac{Y}{a^8 b^4 c'^2 c^4 d^2}
\ll 18^{\omega(q)} \frac{Y}{a^{10} b^7 c^7 c' d^7 } \ll q^{-7 + \epsilon} Y.
\end{equation}
The theorem follows by summing over the $5^{\omega(q)} \ll q^{\epsilon}$ factorizations $q = abcd$ and choices for $c'$.
\end{proof}

\section{Bounds for the dual Shintani zeta function in the 3-torsion problem}\label{section_torsion_bounds}

In this section we will carry out the analysis of Section \ref{section_dual_bounds} for the related problem of estimating
3-torsion in class groups.
In particular, throught this section, $\Phi_p(x)$ and $b_q(\mu_n)$ will correspond to the (complement of the) 
set $V_p$ instead of $U_p$.
This set was defined in \cite{DH}, and we recall the definition in Section \ref{sec_generalizations}. 

The idea of the proof is very much the same, but one new technical difficulty appears: The Fourier transform $\widehat{\Phi}_p(x)$
will take a form which is more difficult to estimate over $|\Disc(x)| < Y$ when $p$ is large in relation to $Y$. As a result,
we will be limited to proving the following
analogue of Theorem \ref{thm_trivial_bound}:
\begin{theorem}\label{thm_trivial_bound_2}
If $\Phi_p(x)$ corresponds to the complement of $V_p$, then we have the bounds
\begin{equation}\label{eqn_trivial_bound_2}
\sum_{\mu_n < X} |b_q(\mu_n)| \ll q^{2 + \epsilon} X
\end{equation}
and
\begin{equation}\label{eqn_trivial_bound_3}
\sum_{\mu_n < X} |b_q(\mu_n)| \ll q^{1 + \epsilon} X + q^{-1 + \epsilon},
\end{equation}
uniformly for all $q$ and $X$.
\end{theorem}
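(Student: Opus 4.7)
The plan is to follow the strategy of Theorem \ref{thm_trivial_bound} as closely as possible. The first step is to obtain a $V_p$-analogue of Lemma \ref{lem_gauss_sum} from \cite{TT}: a list of $\GL_2(\Z/p^2\Z)$-orbits in $V_{\Z/p^2\Z}$ on which $\widehat{\Phi}_p(x)$ is supported, together with the exact values or upper bounds of $|\widehat{\Phi}_p(x)|$ on each. Because $V_p$ is defined by more refined local conditions than $U_p$ (it picks out cubic rings whose discriminant is a fundamental quadratic discriminant, rather than merely maximal rings), we expect its complement to have a larger Fourier support and, on the new orbits, Gauss-sum values that decay like $p^{-c}$ with a smaller exponent $c$. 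This discrepancy is what forces the weaker bounds in Theorem \ref{thm_trivial_bound_2} compared with Theorem \ref{thm_trivial_bound}.

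Once the classification is in place, we restate the sum in the form
\begin{equation*}
\sum_{|\Disc(x)| < Y} |\widehat{\Phi}_q(x)|
\end{equation*}
and break it up according to factorizations $q = \prod_i q_i$ indexed by the orbit types at each prime dividing $q$, exactly as in the proof of Theorem \ref{thm_trivial_bound}. For each factorization we combine the $V_p$-analogue of Lemma \ref{lem_gauss_sum} with Lemma \ref{lemma_overrings} (passing from nonmaximal rings to overrings) and Lemma \ref{lemma_BBP} (counting rings with square-divisible discriminant), and then sum over the $O(q^{\epsilon})$ factorizations. This directly yields \eqref{eqn_trivial_bound_2}, with the exponent $q^{2+\epsilon}$ reflecting the weaker Gauss-sum decay on the orbit types that appear in the $V_p$ case but not the $U_p$ case.

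To establish the sharper bound \eqref{eqn_trivial_bound_3}, we split the contributions into two classes. The orbits that also appeared in the $U_p$-version of Lemma \ref{lem_gauss_sum} contribute $\ll q^{1+\epsilon} X$ by the argument of Theorem \ref{thm_trivial_bound} verbatim. For the extra orbits specific to $V_p$, the counting saving from Lemma \ref{lemma_BBP} is too weak to absorb the smaller power of $p$ in $|\widehat{\Phi}_p|$; here we instead exploit the fact that the relevant orbit representatives are forced to have discriminant of bounded size relative to $q^2$, so that there are only $O(q^{\epsilon})$ of them in the entire fundamental domain. Multiplying this count by the pointwise bound $|\widehat{\Phi}_q| \ll q^{-1+\epsilon}$ on these orbits produces the boundary term $O(q^{-1+\epsilon})$, uniform in $X$.

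The main obstacle will be the $V_p$-analogue of Lemma \ref{lem_gauss_sum}, which is a delicate Fourier-analytic computation on $V_{\Z/p^2\Z}$: because the defining conditions of $V_p$ involve congruences at slightly higher $p$-adic precision than $U_p$, the orbit structure of its complement is more elaborate, and each Gauss-sum calculation must be carried out case by case. In particular, one must identify precisely which new orbits force the $X$-independent contribution, in order to see that the remaining orbits fit into the Belabas--Bhargava--Pomerance framework with the correct exponents. Once the resulting table of values is in hand, the rest of the argument runs in parallel with Section \ref{section_dual_bounds}, with only the exponents of $q$ adjusted to match the new Gauss-sum estimates.
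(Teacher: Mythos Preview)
Your outline for \eqref{eqn_trivial_bound_2} is fine and is essentially what the paper does (in fact the paper takes a shortcut: it simply observes that the pointwise bounds in the $V_p$-table are at most $p$ times those in the $U_p$-table, so \eqref{eqn_trivial_bound_2} follows immediately from Theorem~\ref{thm_trivial_bound}).

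The gap is in your treatment of \eqref{eqn_trivial_bound_3}. The new orbit type in the $V_p$-version of Lemma~\ref{lem_gauss_sum} consists of rings $R$ with $p^3\mid\Disc(R)$, $p^4\nmid\Disc(R)$, content coprime to $p$, and $|\widehat\Phi_p(x)|=p^{-4}$. These rings do \emph{not} have discriminant bounded in terms of $q$; there are $\asymp Y/p^3$ of them with $|\Disc|<Y$, not $O(q^\epsilon)$. Nor is the pointwise size $|\widehat\Phi_q|\ll q^{-1+\epsilon}$ on this class; it is $q^{-4}$ for prime $q$. So your mechanism for producing the boundary term $q^{-1+\epsilon}$ does not work.

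The actual obstruction is a counting problem. After you apply Lemma~\ref{lemma_overrings} to these rings and pass to an overring of index $p$, you are left needing to count cubic rings with $p\mid\Disc$ to the \emph{first} power. Lemma~\ref{lemma_BBP} only treats divisibility by squares, and its class-field-theoretic proof does not extend. The paper resolves this with a new counting lemma (Lemma~\ref{lemma_BBP2}) of the shape
\[
\#\{x:|\Disc(x)|<X,\ rs^2\mid\Disc(x)\}\ \ll\ X/(rs^2)^{1-\epsilon}+(rs)^{2+\epsilon},
\]
and the additive $(rs)^{2+\epsilon}$ here is precisely what propagates to the $q^{-1+\epsilon}$ term in \eqref{eqn_trivial_bound_3}. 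The proof of this lemma is the genuinely new idea: it is obtained by contour-integrating the ``$d$-divisible'' Shintani zeta function itself, and the required bound on the dual side is supplied by the already-proved \eqref{eqn_trivial_bound_2}. In other words, \eqref{eqn_trivial_bound_2} is bootstrapped into Lemma~\ref{lemma_BBP2}, which is then fed back into the orbit-by-orbit argument to yield \eqref{eqn_trivial_bound_3}. Your proposal misses this self-referential step entirely.
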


\begin{remark}
The bound \eqref{eqn_trivial_bound_2} is quite simple to prove (assuming the results
of the previous section), and as we show at the end of Section \ref{subsec_torsion}, this already suffices to obtain Theorem
\ref{thm_rc_torsion} with a larger error term of $O(X^{9/11 + \epsilon})$. This section
describes a ``trick'' which allows us to obtain \eqref{eqn_trivial_bound_3} and thus
an error term of $O(X^{18/23 + \epsilon})$, and may be skipped without loss of continuity.
\end{remark}

We obtain the following corollary in the same way as before.
\begin{proposition}\label{prop_high_middle_bound2}
For $\delta \in (0, 1)$, we have the bounds
\begin{equation}
\sum_{\mu_n < z} |b_q(\mu_n)| {\mu_n}^{-\delta} \ll_{\delta} q^{2 + \epsilon} z^{-\delta + 1},
\end{equation}
when $z \leq q^{-3}$, and
\begin{equation}
\sum_{q^{-3} < \mu_n < q^{-2}} |b_q(\mu_n)| {\mu_n}^{-\delta} \ll_{\delta} q^{3 \delta - 1 + \epsilon},
\end{equation}
\begin{equation}
\sum_{q^{-2} < \mu_n < z} |b_q(\mu_n)| {\mu_n}^{-\delta} \ll_{\delta} q^{1 + \epsilon} z^{-\delta + 1},
\end{equation}
when $z > q^{-2}.$ We also obtain, as before, for any $\delta > 1$ and any $z > q^{-2}$,
\begin{equation}\label{eqn_high_bound_2}
\sum_{\mu_n > z} |b_q(\mu_n)| {\mu_n}^{-\delta} \ll_{\delta} q^{1 + \epsilon} z^{-\delta + 1}.
\end{equation}
\end{proposition}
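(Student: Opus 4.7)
The plan is to mimic the proof of Proposition \ref{prop_high_middle_bound}: decompose each sum dyadically into blocks $[y,2y]$, apply Theorem \ref{thm_trivial_bound_2} on each block, and sum the resulting geometric series. The only new wrinkle is that Theorem \ref{thm_trivial_bound_2} now offers two bounds whose relative strengths cross over: \eqref{eqn_trivial_bound_2} is tighter when $y$ is small (roughly $y \lesssim q^{-3}$), while \eqref{eqn_trivial_bound_3} is tighter when $y$ is larger (roughly $y \gtrsim q^{-3}$). This cross-over is precisely why the proposition splits into the three ranges $\mu_n < q^{-3}$, $q^{-3} < \mu_n < q^{-2}$, and $\mu_n > q^{-2}$; the strange middle exponent $q^{3\delta-1+\epsilon}$ is exactly what emerges from the $q^{-1+\epsilon}$ term of \eqref{eqn_trivial_bound_3} evaluated against the largest possible weight $\mu_n^{-\delta} \approx q^{3\delta}$ in the middle regime.

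For the first range ($z \leq q^{-3}$), I would apply \eqref{eqn_trivial_bound_2} on each dyadic block, obtaining a contribution $\ll q^{2+\epsilon} y^{1-\delta}$. Since $\delta<1$ this is increasing in $y$, so the geometric sum over dyadic $y \leq z$ is dominated by its largest term and yields $\ll q^{2+\epsilon} z^{1-\delta}$.

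For the middle range $q^{-3} < \mu_n < q^{-2}$, I would use \eqref{eqn_trivial_bound_3} on each dyadic block $[y,2y]$ to get
\[
\sum_{y \leq \mu_n \leq 2y} |b_q(\mu_n)| \mu_n^{-\delta} \ll q^{1+\epsilon} y^{1-\delta} + q^{-1+\epsilon} y^{-\delta}.
\]
Summed dyadically, the first term is maximized at $y \approx q^{-2}$ and contributes $q^{2\delta-1+\epsilon}$, while the second is maximized at $y \approx q^{-3}$ and contributes $q^{3\delta-1+\epsilon}$. Since $\delta>0$ the latter dominates, matching the stated bound.

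For the third range ($z > q^{-2}$), on any dyadic block $[y,2y] \subset (q^{-2}, z)$ we have $q^{1+\epsilon} y \geq q^{-1+\epsilon}$, so the constant term of \eqref{eqn_trivial_bound_3} is absorbed and the contribution reduces to $\ll q^{1+\epsilon} y^{1-\delta}$. The dyadic sum up to $z$ (for $\delta<1$) or from $z$ upward (for $\delta>1$ in \eqref{eqn_high_bound_2}) is again geometric and dominated by $y \approx z$, giving $\ll q^{1+\epsilon} z^{1-\delta}$ in both cases. I do not expect any real obstacle: the entire proof is bookkeeping once one locates the threshold $y \sim q^{-3}$ at which the two bounds of Theorem \ref{thm_trivial_bound_2} swap dominance.
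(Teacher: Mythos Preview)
Your proposal is correct and matches the paper's approach exactly: the paper simply states that the proposition is obtained ``in the same way as before,'' i.e., by the dyadic decomposition of Proposition~\ref{prop_high_middle_bound} combined with the two bounds of Theorem~\ref{thm_trivial_bound_2}. Your identification of the crossover at $y \sim q^{-3}$ and the bookkeeping in each range are accurate.
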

We now come to the proof of Theorem \ref{thm_trivial_bound_2}. We begin with the following analogue of
Lemma \ref{lem_gauss_sum}:

\begin{lemma}\label{lem_gauss_sum2}\cite{TT}
The function $\widehat{\Phi}_q(x)$ (now corresponding to the sets $V_p$ for $p | q$) is multiplicative in $q$. Moreover, for a prime $p > 3$, 
the value of $\widehat{\Phi}_p(x)$ is given by the following table (where $R$ is the cubic ring corresponding
to $x$):
\begin{itemize}
\item {\upshape (Content $p^2$:)} $\widehat{\Phi}_p(x) = 2 p^{-2} - p^{-4}$ if $p^2$ divides the content of $R$.
\item {\upshape (Content $p$:)} $|\widehat{\Phi}_p(x)| < 2 p^{-3}$ if $p$ divides the content of $R$, but $p^2$ does not.
\item {\upshape (Divisible by $p^4$:)} $\widehat{\Phi}_p(x) = p^{-3} - p^{-4}$ if $R$ is nonmaximal at $p$, $p$ does not
divide the content of $R$, and $p^4 | \Disc(R)$.
\item {\upshape (Divisible by $p^3$:)} $|\widehat{\Phi}_p(x)| = p^{-4}$ for certain other rings which are nonmaximal at $p$
and for which $p^3 | \Disc(R)$.

\item Otherwise, and in particular if
$p^3 \nmid \Disc(R)$, we have
  $\widehat{\Phi}_p(x) = 0$.
\end{itemize}
\end{lemma}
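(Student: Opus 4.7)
The plan is to mirror the derivation of Lemma \ref{lem_gauss_sum} from \cite{TT}, now with the Davenport--Heilbronn set $V_p$ replacing $U_p$. I would proceed in three stages.

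\textbf{Stage 1: Multiplicativity.} By the Chinese Remainder Theorem, $V_{\Z/q^2\Z} \cong \bigoplus_{p\mid q} V_{\Z/p^2\Z}$, and the bilinear form $[x,y]$ decomposes compatibly. Since membership in $V_p$ is a local condition at $p$, the indicator satisfies $\Phi_q(y) = \prod_{p\mid q}\Phi_p(y_p)$, and the additive character splits as a product as well. Thus $\widehat{\Phi}_q(x) = \prod_{p\mid q}\widehat{\Phi}_p(x_p)$, and the remaining task is the local evaluation at each prime $p > 3$.

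\textbf{Stage 2: Reduction to orbits.} For a fixed prime $p > 3$, the function $\Phi_p$ is $\GL_2(\Z_p)$-invariant, so $\widehat{\Phi}_p$ is $\GL_2(\Z_p)$-invariant under the dual action on $\widehat{V}_{\Z_p}$. By the Delone--Faddeev correspondence, the $\GL_2(\Z/p^2\Z)$-orbits in $V_{\Z/p^2\Z}$ (and their duals) are classified, at the resolution we need, by the $p$-adic content of $x$, the $p$-adic valuation of $\Disc(R)$, and whether $R \otimes \Z_p$ is maximal at $p$ (and if so, its ramification type). I would therefore stratify $x$ accordingly and compute $\widehat{\Phi}_p$ on a normal-form representative of each stratum.

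\textbf{Stage 3: Case-by-case Fourier evaluation.} For each stratum I would evaluate
\[
\widehat{\Phi}_p(x) \;=\; p^{-8}\sum_{y \in V_{\Z/p^2\Z}} \Phi_p(y)\,\exp\!\Bigl(2\pi i\,[x,y]/p^2\Bigr)
\]
directly. When $p^2 \mid \ctt(x)$, the exponential is trivial, and the sum reduces to $p^{-8}\,|V_p^c \bmod p^2|$; unpacking the Davenport--Heilbronn definition of $V_p$ at $p$ yields the local density $2p^{-2}-p^{-4}$. When $p \,\|\, \ctt(x)$, only $y$'s divisible by $p$ see a trivial phase, and the remaining piece reduces to an exponential sum over $V_{\Z/p\Z}$ paired against the nonzero reduction $(x/p) \bmod p$; this is bounded via an elementary character sum by $2p^{-3}$. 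When $\ctt(x)$ is coprime to $p$, a $\GL_2(\Z_p)$-change of variable puts $x$ in a standard form adapted to the splitting type of $R \otimes \Z_p$, and the sum over $V_p^c$ telescopes: the union-of-cosets structure carved out by non-maximality (or by sufficient wild/tame ramification) produces the closed-form values $p^{-3}-p^{-4}$ and $p^{-4}$; for strata not listed, character orthogonality forces the resulting exponential sum to vanish, which accounts for the final clause (in particular the vanishing when $p^3 \nmid \Disc(R)$).

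\textbf{Main obstacle.} The delicate step is Stage 3 when $\ctt(x)$ is coprime to $p$ and $R$ is maximal (or nearly so) at $p$: one must separate the totally ramified cubic $\Z_p$-algebras that lie in $V_p^c$ from those that do not, match each subclass to its orbit representative, and show that the resulting exponential sum collapses to exactly $p^{-4}$ (or vanishes) rather than to a generic Gauss-sum value. This matching of the sieve condition defining $V_p$ with the orbit stratification is the technical heart of the lemma; once it is in place, the remaining cases are routine Fourier computations on $V_{\Z/p^2\Z}$.
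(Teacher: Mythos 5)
The paper does not prove Lemma \ref{lem_gauss_sum2}; it is quoted from the companion paper \cite{TT}, so there is no internal argument to compare against. Taken on its own terms, your Stages 1 and 2 are correct (multiplicativity via the Chinese Remainder Theorem; reduction to $\GL_2(\Z/p^2\Z)$-orbit invariants), and the content-$p^2$ value $2p^{-2}-p^{-4}$ checks out: it equals the density of $\Phi_p^{-1}(1)$ in $V_{\Z/p^2\Z}$, namely $(p^{-2}+p^{-3}-p^{-5}) + p^{-2}(1-p^{-1})(1-p^{-2})$, the first summand counting nonmaximal forms and the second the totally ramified maximal ones.

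However, Stage 3, which is where the lemma actually lives, is asserted rather than carried out. The claim that ``character orthogonality forces the resulting exponential sum to vanish'' off the listed strata is not a valid step: $\Phi_p^{-1}(1)$ is a $\GL_2(\Z/p^2\Z)$-invariant union of orbits, not a union of cosets of a sublattice, so additive-character orthogonality does not deliver the support condition $p^3 \mid \Disc(R)$; nor is there a generic reason the exponential sum on the totally ramified stratum should equal $\pm p^{-4}$ rather than some other Gauss-sum value, or that the ``divisible by $p^4$'' stratum yields exactly $p^{-3}-p^{-4}$. These are precisely the nontrivial orbit-by-orbit computations delegated to \cite{TT}. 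You correctly locate the difficulty in your ``main obstacle'' paragraph but do not resolve it, so the proposal is a plan of attack rather than a proof.
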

We recall again the remarks after Lemma \ref{lem_gauss_sum}, and observe that 
if $p^3 | \Disc(R)$ ($p > 3$), $R$ is in fact automatically nonmaximal at $p$.

In Section \ref{section_dual_bounds}, we needed to count discriminants which were divisible by $p^2$ and 
which contributed $O(p^{-5})$ each to our final estimates. Now, we must count discriminants divisible by $p^3$ and which
contribute $O(p^{-4})$ each. We expect the total contributions to be comparable in size, but we were unable to prove this.
In particular, Lemma \ref{lemma_BBP} is proved using class field theory (see Lemma 3.3 of \cite{BBP}), 
and the proof does not carry over to this case.

We will begin by applying Lemma
\ref{lemma_overrings} to reduce to counting $p$-divisible rings. We then need to bound the number of such rings,
and we can prove such a bound using the methods of this paper! The associated Dirichlet series are ``$p$-divisible
Shintani zeta functions''\footnote{They are Shintani zeta functions if we weight each ring as described in the proof of
Proposition \ref{prop_shintani_reduction}. The weights are all between $1/3$ and $2$ and we are only seeking an $O$-estimate, 
so this technical point will not affect the proof.}, 
and so we may estimate their partial sums using contour integration. 
We thus obtain a statement (Lemma \ref{lemma_BBP2}) whose proof requires \eqref{eqn_trivial_bound_2}, but which is used
in the
proof of \eqref{eqn_trivial_bound_3}. This may seem like circular reasoning;
the reason this
``circular'' argument works is that the proof of Lemma \ref{lemma_BBP} exploits similarities in the structure of $\Phi_p(x)$ and
$\widehat{\Phi}_p(x)$.

\begin{lemma}\label{lemma_BBP2}
For $d = r s^2$ with $r$ and $s$ coprime and squarefree, we have the bound
\begin{equation}\label{eqn_to_be_proved_base2}
\sum_{\substack{|\Disc(x)| < X \\ d | \Disc(x)}} 1 \ll X/d^{1 - \epsilon} + (rs)^{2 + \epsilon}.
\end{equation}
\end{lemma}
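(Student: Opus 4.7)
The approach is to realize the count in \eqref{eqn_to_be_proved_base2} as a partial sum of a Shintani-type Dirichlet series and estimate it using the analytic machinery already developed in this paper, rather than via the class-field-theoretic route of Lemma \ref{lemma_BBP} (which does not extend to higher prime-power divisibility of the discriminant).

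Set $q := rs = \mathrm{rad}(d)$, and consider
$$Z_d(s) := \sum_{\substack{x \in \SL_2(\Z) \backslash V_{\Z} \\ d \mid \Disc(x)}} \frac{1}{|\Stab(x)|} \, |\Disc(x)|^{-s},$$
a ``$d$-divisible'' Shintani zeta function. The defining condition $d \mid \Disc(x)$ is multiplicative and is determined modulo $p^2$ for each $p \mid q$ (namely $p \mid \Disc(x)$ when $p \mid r$, and $p^2 \mid \Disc(x)$ when $p \mid s$), so $Z_d(s)$ is of the Shintani type treated in \cite{TT}. It has meromorphic continuation with poles only at $s = 1$ and $s = 5/6$, and both residues are $\asymp 1/d$ up to bounded local factors: the density at each $p \mid r$ is $\asymp 1/p$ and at each $p \mid s$ is $\asymp 1/p^2$, so the Euler product telescopes to $\asymp 1/(rs^2) = 1/d$. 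It also satisfies a functional equation of the shape \eqref{eqn_shintani_FE}, with dual $\widehat{Z}_d(s)$ built from the Fourier transform of the defining characteristic function; this transform is of the type analyzed in Section \ref{section_torsion_bounds}, with effective conductor $q$.

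Now apply a Chandrasekharan--Narasimhan-style contour shift (the same tool used later in Section \ref{sec_proof}) to estimate the partial sum of $Z_d(s)$. The two poles contribute $\ll X/d^{1-\epsilon}$ (the $s = 5/6$ contribution is absorbed into this). The shifted-contour error is controlled by partial sums of the dual coefficients, to which one applies the bound \eqref{eqn_trivial_bound_2} of Theorem \ref{thm_trivial_bound_2} with conductor $q = rs$; this yields an error of $\ll (rs)^{2+\epsilon}$. Dropping the $|\Stab(x)|^{-1}$ weighting (always $\geq 1/3$) and converting to the unweighted $\GL_2(\Z)$-count of \eqref{eqn_to_be_proved_base2} costs at most a bounded factor, so the two contributions combine to give the claimed bound.

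The main subtlety is not computational but logical. Invoking \eqref{eqn_trivial_bound_2} in the proof of Lemma \ref{lemma_BBP2} is legitimate because that bound was derived from Lemma \ref{lem_gauss_sum2} and the squarefull estimate of Lemma \ref{lemma_BBP} alone, while Lemma \ref{lemma_BBP2} is in turn invoked only in order to upgrade \eqref{eqn_trivial_bound_2} to the sharper bound \eqref{eqn_trivial_bound_3}, so no circular dependence arises. The other delicate point, which is what prevents the error term from inflating to $d^{2+\epsilon}$, is that the effective conductor in the functional equation of $Z_d$ is really $q = \mathrm{rad}(d)$ rather than $d$: this reflects the fact that the relevant local characteristic functions (and hence their Fourier duals) are determined modulo $p^2$, not modulo higher powers of $p$.
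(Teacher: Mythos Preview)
Your approach is the same as the paper's: introduce the $d$-divisible Shintani zeta function, shift a contour, read off $X/d^{1-\epsilon}$ from the polar part, and bound the remainder by a partial-sum estimate on the dual coefficients of size $(rs)^{2+\epsilon}$. Your discussion of why the argument is not circular (\eqref{eqn_trivial_bound_2} feeds into this lemma, which then feeds into \eqref{eqn_trivial_bound_3}) also matches the paper exactly.

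Two points where you should tighten the write-up. First, the paper does not invoke the full Chandrasekharan--Narasimhan machinery here; it uses the simpler Mellin pair $\sum a(n)e^{-n/X} = \int \xi_d(s) X^s \Gamma(s)\,ds$ and shifts to $\Re s = 1-c$ with $c$ just past $1$. This already converges absolutely, avoids finite differencing, and picks up an extra $\xi_d(0) \ll d \le (rs)^2$ from the pole of $\Gamma$, which is harmlessly absorbed into $(rs)^{2+\epsilon}$. Your CN route would work but is heavier than needed.

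Second, and more substantively, you cannot literally ``apply \eqref{eqn_trivial_bound_2} with conductor $q=rs$.'' That bound is specifically for the test function $\prod_{p\mid q}\mathbf{1}_{V_p^c}$, i.e.\ the $p^2\mid\Disc$ condition at every $p\mid q$. For primes $p\mid r$ your local condition is only $p\mid\Disc$, a genuinely different characteristic function with a different Fourier transform. The paper therefore proves the needed bound \eqref{eqn_circular_bound} by keeping \eqref{eqn_trivial_bound_2} for the $s$-part and handling the $r$-part separately: at each $p\mid r$ one splits on whether $p\mid\ctt(x)$ and uses the explicit size $|\widehat{\Phi}_p|\le p^{-2}$ from \cite{TT} on the content-coprime piece, giving a local factor $\ll p^{-2+\epsilon}$ and hence $r^{-2+\epsilon}$ overall. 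Your intuition that the effective conductor is $\mathrm{rad}(d)$ is exactly right, but the $r$-contribution needs this short extra argument rather than a citation.
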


We first use this to prove Theorem \ref{thm_trivial_bound_2}, and then we will prove Lemma \ref{lemma_BBP2}.

\begin{proof}[Proof of Theorem \ref{thm_trivial_bound_2}]
The proof of \eqref{eqn_trivial_bound_2} follows by comparing Lemma \ref{lem_gauss_sum2} to Lemma \ref{lem_gauss_sum}. For each
$x$, the bound in Lemma \ref{lem_gauss_sum2} is at most $q$ times that in Lemma \ref{lem_gauss_sum}, so
\eqref{eqn_trivial_bound_2} follows from Theorem \ref{thm_trivial_bound}.

To prove \eqref{eqn_trivial_bound_3},
we reformulate our bound in the shape
\begin{equation}\label{eqn_to_be_proved2}
\sum_{|\Disc(x)| < Y} |\widehat{\Phi}_q(x)| \ll q^{-7 + \epsilon} Y + q^{-1 + \epsilon}.
\end{equation}
As in the proof of Theorem \ref{thm_trivial_bound}, 
for each factorization $q = abcd$ we consider the contribution to
\eqref{eqn_to_be_proved} from those $x$ satisfying the following:
\begin{itemize}
\item
If $p| a$, then $p^2|\ctt(x)$;
\item
if $p| b$, then $p^2\nmid\ctt(x)$ but $p|\ctt(x)$;
\item
if $p| c$, then $p\nmid\ctt(x)$ but $p^4|\Disc(x)$;
\item
if $p| d$, then $p^4\nmid\Disc(x)$ but $p^3|\Disc(x)$.
\end{itemize}
Lemma \ref{lem_gauss_sum} implies that for each $x$ we have
\begin{equation}
|\widehat{\Phi}_q(x)| \leq \frac{2^{\omega(ab)}}{a^2 b^3 c^3 d^4}.
\end{equation}
We use Lemma \ref{lemma_overrings} to replace each $x$ by an overring $x'$
of index $cd$. For each $x'$, we define $c' | c$ and $d' | d$ by $c'd' = \gcd(\ctt(x'), cd)$, such that there are at
most $3^{\omega(cd)} c' d'$ rings $x$ corresponding to each overring $x'$. We note also that the discriminant of
each $x'$ is divisible by $(\frac{c}{c'})^2 (\frac{d}{d'})$. 

For each choice of $a, b, c, c', d, d'$, the contribution to \eqref{eqn_to_be_proved} is therefore
\begin{equation}
\leq \frac{2^{\omega(a)} 3^{\omega(cd)} c' d'}{a^2 b^3 c^3 d^4}
\sum_{\substack{|\Disc(x')| < Y/(cd)^2 \\ a^2 b c' d' | \ctt(x') \\ (\frac{c}{c'})^2 (\frac{d}{d'}) | \Disc(x')}} 1
= 
\frac{2^{\omega(a)} 3^{\omega(cd)} c' d'}{a^2 b^3 c^3 d^4}
\sum_{\substack{|\Disc(x')| < \frac{Y}{a^8 b^4 c'^4 c^2 d'^4 d^2} \\ (\frac{c}{c'})^2 (\frac{d}{d'}) | \Disc(x')}} 1.
\end{equation}
By Lemma \ref{lemma_BBP2}, this is
\begin{equation}
\ll
\frac{q^{\epsilon} c' d'}{a^2 b^3 c^3 d^4} \bigg(
\frac{Y}{a^8 b^4 c'^2 c^4 d'^3 d^3} + \bigg( \frac{cd}{c' d'} \bigg)^2 \bigg)
\ll q^{\epsilon} \frac{Y}{a^{10} b^7 c^7 c' d^7 d'^2}  + \frac{q^{\epsilon}}{a^2 b^3 c c' d^2 d'}
\ll q^{-7 + \epsilon} Y + q^{-1 + \epsilon}.
\end{equation}
The theorem now follows by summing over the $6^{\omega(q)} \ll q^{\epsilon}$ possibilities for $a, b, c, c', d, d'$.
\end{proof}

\begin{proof}[Proof of Lemma \ref{lemma_BBP2}]
The proof follows the methods presented in this paper, but it is simpler. As the proof is very similar, we will
omit some of the details.
 
We begin by defining the {\itshape $d$-divisible Shintani zeta functions}, which
count only those discriminants divisible by $d = rs^2$. For prime factors of $s$ (other than 2, 3)
the local condition is given by the complement of $V_p$, and for factors of prime $r$ it is described in \cite{TT}.
These zeta functions satisfy a close analogue of Theorem \ref{thm_DWT}; we omit the details, but the exact form 
of the functional equation can be readily deduced from \cite{TT}.

For cubefree $d = rs^2$,
let  $\xi^{\pm}_d(s) := \sum_{d | n} a^{\pm}(n) n^{-s}$ denote\footnote{This is the same notation that we used for the
$q$-nonmaximal zeta function. This notation will be used only in the proof of this lemma.}
the $d$-divisible Shintani zeta function, and let
$\xi_d(s) := \sum_{d | n} a(n) n^{-s}$ denote either of the diagonalized zeta functions, as in \eqref{eqn_new_diag}.
Then we have the relation
\begin{equation}\label{eqn_smoothed_integral}
\sum_{d | n} a(n) \exp(- n/X) = \int_{c - i \infty}^{c + i \infty}
\xi_d(s) X^s \Gamma(s) ds
\end{equation}
for any $c \in (1, \frac{3}{2})$ (analogously to \eqref{eqn_for_each_q}). 
Note that $\sum_{d | n; \ n < X} a(n) \ll \sum_{d | n} a(n) \exp(-n/X)$; 
the smoothing factor of $\exp(-n/X)$ is introduced to improve
the error terms.

As $\xi_d(s) \Gamma(s)$ is holomorphic for $\Re s > -1/2$, except for poles at $s = 1, \frac{5}{6}, 0$,
we may shift the contour and use the functional equation we obtain 
that
\begin{multline}
\sum_{d | n} a(n) \exp(- n/X) =
X^{1 - c} \sum_{\mu_n} \frac{b_d(\mu_n)}{\mu_n^c} \int_{c - i \infty}^{c + i \infty}
\frac{\Delta(s)}{\Delta(1 - s)} (X \mu_n)^{-it} \Gamma(1 - s) ds
+
\\
\bigg( \Gamma(1) \Res_{s = 1} \xi_d(s) \bigg) X + 
\bigg( \Gamma(5/6) \Res_{s = 5/6} \xi_d(s) \bigg) X^{5/6} + 
\xi_d(0)
\end{multline}
where $\sum_{\mu_n \in \frac{1}{d^4} \Z} b_d(\mu_n) \mu_n^{-s}$ is the dual zeta function, $\Delta(s)$ is as in \eqref{eqn_def_delta},
the residues are $\ll d^{-1 + \epsilon}$ and $d^{-5/6 + \epsilon}$ respectively, and $\xi_d(0) \ll d$,
as follows from \cite{TT}.
The integral above is absolutely convergent, and we bound it by an absolute constant (which in particular does not depend
on $\mu_n$). We must therefore bound
$X^{1- c} \sum_{\mu_n} {b_d(\mu_n)} \mu_n^{-c}.$

We choose $c = 1 + \frac{\epsilon}{4}$ so that $X^{1 - c} \leq (d^{-4})^{-\epsilon/4} = d^{\epsilon}$.
Arguing as in Proposition \ref{prop_high_middle_bound}, we see that the sum over $\mu_n$ will be
$\ll (rs)^{2 + \epsilon}$, implying the lemma, provided we can show that 
\begin{equation}\label{eqn_circular_bound}
\sum_{\mu_n < Y} |b_d(\mu_n)| \ll (rs)^{2 + \epsilon} Y.
\end{equation}
Note that when $d = s^2$ and $(d, 6) = 1$, this is {\itshape exactly} \eqref{eqn_trivial_bound_2}.
Crucially, the proof of \eqref{eqn_trivial_bound_2} does not depend on this lemma, but 
our argument does have an interesting circular flavor: the bound \eqref{eqn_trivial_bound_2} is an essential ingredient
in the proof of the stronger bound \eqref{eqn_trivial_bound_3}.
As we discussed earlier,
the idea is that an important piece of the Fourier transform $\widehat{\Phi}_q(x)$ resembles $\Phi_q(x)$ itself.

To prove \eqref{eqn_circular_bound}, note that as before, it suffices to prove that
\begin{equation}\label{eqn_circular_bound2}
\sum_{|\Disc(x)| < Y} |\widehat{\Phi}_d(x)| \ll d^{-4} (rs)^{2 + \epsilon} Y,
\end{equation}
where $\widehat{\Phi}_d(x)$ is again multiplicative in $d = r s^2$. We extend the proof of \eqref{eqn_trivial_bound_2}
(which extended the proof of Theorem \ref{thm_trivial_bound}) to cover the case $r > 1$. For each factorization $r = ef$,
consider the contribution
from those $x$ for which $(\ctt(x), r) = e$. We divide each such $x$ by $e$,
and the formulas in \cite{TT} imply that $|\widehat{\Phi}_f(x)| \leq f^{-2}$, so the total $r$-contribution to \eqref{eqn_circular_bound2}
is  $\leq e^{-4} f^{-2} \leq r^{-2}$ for each factorization $r = ef$. Summing over the $\ll r^{\epsilon}$ such factorizations, we obtain
a total $r$-contribution $\ll r^{-2 + \epsilon}$, as claimed in
\eqref{eqn_circular_bound2}. This completes the proof.
\end{proof}

\section{The proof of Roberts' conjecture}\label{sec_proof}
We will prove Roberts' conjecture in three steps. In Section \ref{subsec_reduction} we discuss the relationship
between the Shintani zeta coefficients and counting functions for cubic rings, and reduce Roberts' conjecture
to a statement about partial sums of Shintani zeta functions. In Section \ref{subsec_setup} we incorporate the
Datskovsky-Wright diagonalization, and transform our problem into one that can be readily addressed using
a contour integration argument of Chandrasekharan and Narasimhan \cite{CN}. Finally, in Section \ref{subsec_contour} we
do this contour integration. As it would be impractical to reproduce the entire argument in \cite{CN}, we will refer to their paper for many of the details and call the reader's attention to the few changes we introduce to their argument.

\subsection{Reduction to Shintani zeta coefficients.}\label{subsec_reduction}

We want to obtain estimates for $N_3^{\pm}(X)$, the count of cubic fields of positive or negative discriminant
less than $X$. The first step in our argument is to 
relate these quantities to partial sums of the coefficients of the Shintani zeta
function. Define Dirichlet series $F^{\pm}(s) = \sum_n c^{\pm}(n) n^{-s}$ by
\begin{equation}\label{eqn_field_shintani}
F^{\pm}(s) = \sum_{n \geq 1} c^{\pm}(n) n^{-s} := \sump_{x \in \SL_2(\Z) \backslash V_{\Z}} \frac{1}{|\Stab(x)|} |\Disc(x)|^{-s},
\end{equation}
where the prime indicates that the sum is restricted to those $x$ which are maximal at all places (i.e., contained in 
$U_p$ for all $p$).

We define partial sums
\begin{equation}
N^{\pm}(X) := \sum_{n \leq X} c^{\pm}(n).
\end{equation}

We will prove the following:

\begin{proposition}\label{prop_shintani_reduction} We have
\begin{equation}
N_3^{\pm}(X) = \frac{1}{2} N^{\pm}(X) - \frac{3}{\pi^2} X + O(X^{1/2}).
\end{equation}
\end{proposition}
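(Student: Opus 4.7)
The plan is to apply the Delone-Faddeev correspondence, in its $\SL_2(\Z)$-refined form, to relate the Shintani partial sum $N^{\pm}(X)$ to a weighted count of maximal cubic rings, then isolate the cubic-field count by subtracting the contributions of reducible maximal rings. Recall that $\SL_2(\Z)$-orbits on $V_{\Z}$ correspond to isomorphism classes of \emph{oriented} maximal cubic rings. The maximal cubic rings (by the remark after Proposition \ref{def_up}) come in three types: (a) the ring of integers $\calO_K$ of a cubic field $K$; (b) the product $\calO_K\times \Z$ for a quadratic field $K$; and (c) the split ring $\Z\times\Z\times\Z$.

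For each iso class I would determine how the $\GL_2(\Z)$-orbit of its associated form splits into $\SL_2(\Z)$-orbits, and compute the $\SL_2(\Z)$-stabilizer of each. Writing $\det\colon\GL_2(\Z)\to\{\pm 1\}$, the criterion is whether $\Aut(R)$, acting on the rank-two lattice $R/\Z$, contains an orientation-reversing element (i.e.\ determinant $-1$): if not, the $\GL_2(\Z)$-orbit splits into two $\SL_2(\Z)$-orbits each with stabilizer $\Aut(R)$; if so, it is a single $\SL_2(\Z)$-orbit with stabilizer $\Aut(R)\cap\SL_2(\Z)$. Case analysis then yields: in case (a), every automorphism of $\calO_K$ has order dividing $3$ and hence determinant $+1$, so non-Galois $K$ contributes $2$ to $c^{\pm}(\Disc(K))$ and Galois $K$ contributes $\tfrac{2}{3}$; in case (b), the Galois involution of $K$ acts as $-1$ on the $\calO_K$-part of a relative basis and as $+1$ on the $\Z$-part, hence has determinant $-1$, so each quadratic field contributes a single $\SL_2(\Z)$-orbit with trivial stabilizer, contributing $1$; in case (c), transpositions in $S_3$ have determinant $-1$, so $\Z^3$ contributes a single orbit with stabilizer $A_3$, namely $\tfrac{1}{3}$ (only to $c^{+}(1)$).

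Summing up, the Galois contribution in case (a) and the lone contribution in case (c) are absorbed into an $O(X^{1/2})$ error, using the elementary bound $\#\{K\text{ Galois cubic}:\Disc(K)\le X\}\ll X^{1/2}$. Thus
\[
N^{\pm}(X) \;=\; 2\,N_3^{\pm}(X) \;+\; \#\bigl\{K\text{ quadratic}: 0<\pm\Disc(K)\le X\bigr\} \;+\; O(X^{1/2}).
\]
The count of quadratic fields is the classical density of fundamental discriminants of the appropriate sign, obtained by separating $D\equiv 1\pmod 4$ squarefree from $D=4d$ with $d\equiv 2,3\pmod 4$ squarefree and summing the resulting squarefree densities. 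Plugging in this asymptotic and rearranging gives the proposition, with the $O(X^{1/2})$ error coming both from the Galois/trivial contributions and from the error in the squarefree count.

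The main obstacle in this argument is the orbit/stabilizer bookkeeping for the reducible ring $\calO_K\times \Z$: one must correctly identify that the Galois involution has determinant $-1$ on $R/\Z$, and therefore collapses the pair of orientations into a single $\SL_2(\Z)$-orbit. This is what makes $\calO_K\times \Z$ behave differently from $\calO_K$ for a cubic field, and is precisely the source of the $\tfrac{3}{\pi^2}X$ subtraction, whose constant reflects the density of fundamental discriminants.
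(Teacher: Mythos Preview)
Your proposal is correct and follows essentially the same approach as the paper's proof: both decompose $N^{\pm}(X)$ via Delone--Faddeev into weighted contributions from non-Galois cubic, Galois cubic, quadratic, and trivial maximal rings (with respective weights $2$, $2/3$, $1$, $1/3$), bound the Galois-cubic contribution by $O(X^{1/2})$ via Cohn's theorem, and subtract the asymptotic count of quadratic fields of the given sign. You supply more explicit $\SL_2(\Z)$-orbit and stabilizer bookkeeping than the paper, which simply cites the weights from \cite{DW2}.
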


\begin{proof}
By the Delone-Faddeev correspondence (see also Section 2 of \cite{DW2}), 
the Dirichlet series in \eqref{eqn_field_shintani} counts fields of degree
$\leq 3$ (or, more properly, their maximal orders), with different weights for different types of fields. Non-Galois
cubic fields are counted with weight $2$, Galois fields are counted with weight $2/3$, quadratic fields are counted
with weight 1, and $\Q$ is counted with weight $1/3$.

The number of cyclic cubic extensions of discriminant $\leq X$ is $O(X^{1/2})$ \cite{C}, the number of quadratic extensions
of either positive or negative discriminant $\leq X$ is equal to $\frac{3}{\pi^2} X + O(X^{1/2})$,
and of course there is only one trivial extension of $\Q$. The result therefore follows by subtracting and reweighting 
these contributions as appropriate.
\end{proof}

\subsection{Setup for the contour integration}\label{subsec_setup}
In this section we will incorporate the inclusion-exclusion sieve and Datskovsky and Wright's diagonalization, and
bring our problem to a form where we can apply contour integration.

By Proposition \ref{prop_shintani_reduction}, it suffices to count
\begin{equation}\sump_{\substack{x \in \SL_2(\Z) \backslash V_{\Z} \\ \pm \Disc(x) \leq X}}
\frac{1}{\Stab(x)},
\end{equation}
where the dash on the sum indicates that we count only those lattice points corresponding to {\itshape maximal}
cubic rings. 
A cubic ring is maximal if and only if it is maximal at each prime. By inclusion-exclusion, this sum is equal to 
\begin{equation}
\sum_q \mu(q) \bigg( \sum_{n \leq X} a^{\pm}_q(n) \bigg),
\end{equation}
where the $a^{\pm}_q(n)$ are the coefficients of the $q$-nonmaximal Shintani zeta functions. By Lemma \ref{lemma_BBP},
the inner sum is $\ll X q^{-2 + \epsilon}$, uniformly in $q$, and it follows that the total sum is
\begin{equation}
\sum_{q \leq Q} \mu(q) \bigg( \sum_{n \leq X} a^{\pm}_q(n) \bigg) + O\bigg(\frac{X}{Q^{1 - \epsilon}}\bigg),
\end{equation}
for any choice of $Q$. The main term above is what we want to estimate.

Although it is not strictly necessary (see Theorem 3 of \cite{SS}),
it will simplify our computations to incorporate Datskovsky and Wright's diagonalization, described in Section \ref{subsec_diag}.
We will write
\begin{equation}\label{eqn_new_diag}
a_q(n) := \sqrt{3} a^+_q(n) \pm a^-_q(n),
\end{equation}
such that the zeta functions $\xi_q(s) := \sum_n a_q(n) n^{-s}$ satisfy
the simple functional equation \eqref{eqn_diag_fe} or \eqref{eqn_diag_fe2}. As we will prove our results simultaneously
for both 
choices of sign in \eqref{eqn_new_diag}, we will not indicate this sign in our notation.

We write $N(X)$ for either of the analogous linear combinations of $N^{\pm}(X)$, and we will prove estimates for 
\begin{equation}\label{eqn_sum_ie}
N_Q(X) := \sum_{q \leq Q} \mu(q) \bigg( \sum_{n \leq X} a_q(n) \bigg).
\end{equation}
We then take the appropriate linear combinations to recover the analogous
estimates for the original Shintani zeta function.

To evaluate \eqref{eqn_sum_ie}, recall that Perron's formula yields the equality\footnote{For strict equality, we must take
$X$ not equal to any value of $n$ (any irrational number will do).}
\begin{equation}\label{eqn_perron}
\sum_{n \leq X} a_q(n) = \int_{c - i \infty}^{c + i \infty} \xi_q(s) \frac{X^s}{s} ds
\end{equation}
for any $c > 1$. {\itshape In principle}, one evaluates the integral by shifting the contour to the left, obtaining 
main terms of order $X$ and $X^{5/6}$ from the poles of $\xi_q(s)$, along with an error term. 
In practice, one runs into convergence issues at infinity
and must tweak the method somehow. We adopt the method of Chandrasekharan and Narasimhan \cite{CN}, 
which has its origins in work of Landau \cite{L}. In particular, following \cite{CN}, we will 
smooth the sum above to obtain an integral with nice convergence
properties at infinity, and then use a finite differencing method to recover the sum in \eqref{eqn_perron} from the smoothed sum.

As we will see, we may improve our error terms by departing from \cite{CN} in one respect. 
We will 
smooth the entire sum in \eqref{eqn_sum_ie}, 
estimate the smoothed sum over each $q$ separately, and 
combine the contributions from all $q$ to obtain a smoothed version of the count in \eqref{eqn_sum_ie}.
Recovering the count in \eqref{eqn_sum_ie} from the smoothed count involves an error term, and
the error made in unsmoothing the combined count is roughly equal to the error made in unsmoothing
the contribution from any individual $q$. Therefore, we will not actually estimate the contribution of any
individual $q$ to \eqref{eqn_sum_ie}.

\subsection{The contour integration}\label{subsec_contour}

We now begin in earnest, closely following \cite{CN}. We introduce a smoothing factor $(X - n)^{\rho}$, and write
\begin{equation}
N_Q^{\rho}(X) := \frac{1}{\Gamma(\rho + 1)} \sum_{q \leq Q} \mu(q) \bigg( \sum_{n \leq X} (X - n)^{\rho} a_q(n) \bigg).
\end{equation}
Here $\rho$ is any sufficiently large integer. We may in fact take $\rho = 3$, but to follow the notation of \cite{CN}\footnote{In the notation of \cite{CN}, we have $A = 2, q =1, r =1, \delta = 1,$
and $N = 4$, as determined by the structure of our problem.}
we will leave the value undetermined.
 (Any error terms may depend on $\rho$.)

For each $q$, we have
\begin{equation}\label{eqn_for_each_q}
\frac{1}{\Gamma(\rho + 1)} \sum_{n \leq X} (X - n)^{\rho} a_q(n) =
\frac{1}{2 \pi i} \int_{c - i \infty}^{c + i \infty}
\frac{1}{s (s + 1) \cdots (s + \rho)} \xi_q(s) X^{s + \rho} ds,
\end{equation}
for any $c > 1$. We move the integral to the line $\sigma = 1 - c$, choosing $c < \frac{5}{4}$ so that we do not
pick up any singularities of the integral left of $s = 0$, and so that the integral \eqref{eqn_fe_integral} converges for $\rho \geq 3$.
In doing so, we
pick up contributions from the residues of $\xi_q(s)$ at $s = 1$ and $s = 5/6$.

Later, we will estimate the integral on the line $\sigma = 1 - c$ using the functional equation. We first explain
how $N_Q^{\rho}(X)$ is related to our unsmoothed count $N_Q(X)$. For  
a parameter $y$ to be determined later, define a finite differencing operator $\Delta_y^{\rho}$ (on the space of
real valued functions $F$) by
$$\Delta_y^{\rho}F(x) := \sum_{\nu = 0}^{\rho} (-1)^{\rho - \nu} {\rho \choose \nu} F(x + \nu y).$$
It is proved in (4.14) of \cite{CN}
that
\begin{equation}\label{eqn_cn414}
\Delta_y^{\rho} [N_Q^{\rho}(X) - R_Q^{\rho}(X)]
= y^{\rho} [N_Q(X) - R_Q(X)] + O \bigg(y^{\rho + 1} + y^{\rho} 
\sum_{X < n \leq X + \rho y} \sum_{q \leq Q} a_q(n) \bigg),
\end{equation}
where
$$R_Q^{\rho}(x) = \sum_{q \leq Q} \mu(q)
\bigg( \frac{1}{1 (1 + 1) \cdots (1 + \rho)} X^{1 + \rho} \Res_{s = 1} \xi_q(s)
+ \frac{1}{\frac{5}{6} \Big( \frac{5}{6} + 1 \Big) \cdots \Big( \frac{5}{6} + \rho \Big)}
X^{5/6 + \rho} \Res_{s = 5/6} \xi_q(s) \bigg)$$
(with an additional residue term at $s = 0$ which we subsume into our error term), and
$$R_Q(X) = \sum_{q \leq Q} \mu(q)
\bigg( X \Res_{s = 1} \xi_q(s) + \frac{6}{5} X^{5/6} \Res_{s = 5/6} \xi_q(s) \bigg).$$

The error term in \eqref{eqn_cn414} is $O(y^{\rho + 1 + \epsilon})$ if $y > X^{3/5}$;
this follows by estimating
$$\sum_{X < n \leq X + \rho y} \sum_{q \leq Q} a_q(n)
\ll y^{\epsilon} \sum_{X < n \leq X + \rho y} a(n) \ll y^{1 + \epsilon}.$$
The first estimate follows because $|a_q(n)| \leq |a(n)|$ and $a_q(n) = 0$ unless $q^2 | n$,
and the latter estimate follows from partial sum estimates for the standard Shintani zeta function.

Therefore, for $y > X^{3/5}$ it follows that 
\begin{equation}\label{eqn_CN_error}
N(X) - R_Q(X)
\ll y^{- \rho} \Delta_y^{\rho} [N_Q^{\rho}(X) - R_Q^{\rho}(X)]
+ y^{1 + \epsilon} + \frac{X}{Q^{1 - \epsilon}},
\end{equation}
where
\begin{equation}
N_Q^{\rho}(X) - R_Q^{\rho}(X) = \sum_{q \leq Q}
\bigg(
\frac{1}{2 \pi i} \int_{1 - c - i \infty}^{1 - c + i \infty}
\frac{1}{ s (s + 1) \cdots (s + \rho)} \xi_q(s) X^{s + \rho} ds \bigg).
\end{equation}
We will study this integral individually for each $q$. 
To denote this, we replace the subscript $Q$ with $q$ throughout.
Applying the functional equation \eqref{eqn_diag_fe} or \eqref{eqn_diag_fe2}, the integral is equal to
\begin{equation}\label{eqn_fe_integral}
\frac{1}{2 \pi i} 
\int_{c - i \infty}^{c + i \infty}
\frac{1}{(1 - s) (2 - s ) \cdots (1 + \rho - s)} \frac{\pm \Delta(s)}{3 \Delta(1 - s)} 
\widehat{\xi}_q(s) X^{1 + \rho - s} ds,
\end{equation}
where
\begin{equation}\label{eqn_def_delta}
\Delta(s) := \bigg(\frac{2^4 \cdot 3^6}{\pi^4} \bigg)^{s/2}
 \Gamma\bigg(\frac{s}{2}\bigg)
 \Gamma\bigg(\frac{s}{2} + \frac{1}{2} \bigg) \Gamma\bigg(\frac{s}{2} + a_3 \bigg) \Gamma\bigg(\frac{s}{2} + a_4\bigg).
\end{equation}
Here $a_3$ and $a_4$ are equal to either $5/12$ and $7/12$ or $\pm 1/12$ as appropriate.

The integral in \eqref{eqn_fe_integral} is equal to
\begin{equation}\label{eqn_first_bq}
\sum_{\mu_n \in \frac{1}{q^8} \Z} \frac{b_q(\mu_n)}{{\mu_n}^{1 + \rho}}
\Bigg(
\frac{1}{2 \pi i}
\int_{c - i \infty}^{c + i \infty}
\frac{1}{(1 - s) (2 - s ) \cdots (1 + \rho - s)} \frac{\pm \Delta(s)}{3 \Delta(1 - s)} 
(\mu_n X)^{1 + \rho - s} ds
\Bigg)
.
\end{equation}
This integral and its finite difference are thoroughly analyzed in \cite{CN}. Although one
might hope to play the oscillation of the $b_q(\mu_n)$ against oscillation in this integral,
our attempts to do this were unsuccessful. However, we still obtain good error terms
by taking absolute values of the $b_q(\mu_n)$ and using bounds for the integral proved in \cite{CN}.

Recall that our error term in \eqref{eqn_CN_error} consists of a sum over $q$ of the operator $\Delta_y^{\rho}$
applied to this integral. Following the argument in \cite{CN}, and in particular the bounds on p. 109 there,
we have
\begin{equation}\label{eqn_from_CN}
\Delta_y^{\rho} [N_q^{\rho}(X) - R_q^{\rho}(X)]
\ll 
y^{\rho} X^{3/8} \sum_{\mu_n \leq z} \frac{|b_q(\mu_n)|}{\mu_n^{5/8}}
+ X^{3/8 + 3 \rho/4} \sum_{\mu_n > z} \frac{|b_q(\mu_n)|}{\mu_n^{5/8 + \rho/4}},
\end{equation}
where $z$ is a free parameter. 
We estimate the sums on the right
using the bounds given in 
Proposition \ref{prop_high_middle_bound}.
We conclude that
\begin{equation}\label{eqn_used_CN}
y^{- \rho} \Delta_y^{\rho} [N_q^{\rho}(X) - R_q^{\rho}(X)]
\ll q^{1 + \epsilon} X^{3/8} z^{3/8} \bigg( 1 + \bigg(\frac{X^3}{y^4 z}\bigg)^{\rho/4} \bigg),
\end{equation}
and therefore, adding over all $q$,
\begin{equation}\label{eqn_n_estimate}
N(X) - R_Q(X) \ll Q^{2 + \epsilon}
X^{3/8} z^{3/8} \bigg(1 + \bigg(\frac{X^3}{y^4 z}\bigg)^{\rho/4} \bigg)
+ y^{1 + \epsilon}
+ \frac{X}{Q^{1 - \epsilon}}.
\end{equation}
We choose $y = X/Q$ and $z = X^3 / y^4$ to equalize error terms. The above is then
\begin{equation}\label{eqn_choose_Q}
\ll Q^{7/2 + \epsilon} + X^{1 + \epsilon}/Q,
\end{equation}
and choose $Q = X^{2/9}$ to obtain an error term of $O(X^{7/9 + \epsilon}).$ Note that $y > X^{3/5}$ as
required for \eqref{eqn_CN_error}.

We now reverse our diagonalizations to obtain estimates for $N^{\pm}(X)$, with the same error terms. 
It remains to evaluate $R_Q^{\pm}(X)$.
 We see that
\begin{equation}
R_Q^{\pm}(X) = X \sum_{q \leq Q} \mu(q) \Res_{s = 1} \xi^{\pm}_q(s)
+ \frac{6}{5} X^{5/6} \sum_{q \leq Q} \mu(q) \Res_{s = 5/6} \xi^{\pm}_q(s).
\end{equation}

We now apply the formulas in \cite{TT} for the residues, quoted in Theorem \ref{thm_DWT}. 
We have
\begin{multline}\label{eqn_plugged_in}
R_Q^{\pm}(X) = X \sum_{q \leq Q}
\mu(q) \bigg( \alpha^{\pm} \prod_{p | q} \bigg(\frac{1}{p^2} + \frac{1}{p^3} - 
\frac{1}{p^5} \bigg) + \beta \prod_{p | q} \bigg(\frac{2}{p^2}
- \frac{1}{p^4} \bigg) \bigg)
\\
+ \frac{6}{5} \gamma^{\pm} X^{5/6} \bigg( \sum_{q \leq Q} \mu(q) \prod_{p | q}
\bigg(\frac{1}{p^{5/3}} + \frac{1}{p^2} - \frac{1}{p^{11/3}}\bigg) \bigg),
\end{multline}
where $\alpha^+ = \pi^2/36$, $\alpha^- = \pi^2 / 12$, $\beta = \pi^2/12$,
$\gamma^+ = \frac{\Gamma(1/3)^3 \zeta(1/3)}{4 \sqrt{3} \pi}$, and $\gamma^- = \sqrt{3} \gamma^+$.

We replace the sums over $q \leq Q$ by the appropriate Euler products, with error 
$\ll X Q^{-1 + \epsilon} + X^{5/6} Q^{-2/3 + \epsilon} \ll X^{7/9 + \epsilon}$,
and we see that
\begin{equation}\label{eqn_rq_estimate}
R_Q^{\pm}(X) = X \bigg( \alpha^{\pm} \frac{1}{\zeta(2) \zeta(3)}
+ \beta \frac{1}{\zeta(2)^2} \bigg)
+ X^{5/6} \bigg( \frac{6}{5} \gamma^{\pm} \cdot \frac{1}{\zeta(2) \zeta(5/3)} \bigg)
+ O(X^{7/9 + \epsilon}).
\end{equation}
Theorem \ref{thm_rc} now follows by combining \eqref{eqn_n_estimate} and \eqref{eqn_rq_estimate}
with Proposition \ref{prop_shintani_reduction}.

\section{Generalizations of Roberts' conjecture}\label{sec_generalizations}

The proofs of our generalizations of Roberts' conjecture follow along very similar lines. In this section we will
describe these generalizations more explicitly, and explain the new steps required in the proofs.

As we prove a variety of generalizations and work out some explicit examples, this section is rather long. 
We begin in Section \ref{subsec_torsion} with the proof of Theorem \ref{thm_rc_torsion}, on 3-torsion in quadratic
fields. In Section \ref{subsec_local_specs}
we describe the proof of Theorem \ref{thm_rc_extended} concerning local specifications, and in Section
\ref{subsec_local_torsion} we extend these arguments to the 3-torsion problem (Theorem \ref{thm_rc_torsion2}).

In Section \ref{subsec_ap} we tackle the problem of arithmetic progressions, and we prove Theorem
\ref{thm_technical_ap}, our most general result on cubic fields. In Section \ref{subsec_computations}
we use Theorem \ref{thm_technical_ap} 
work out  some examples in more detail, and in particular we prove Theorem \ref{thm_progressions}.
We also present the results of some numerical calculations. We conclude in Section \ref{subsec_torsion_ap}
with a general result on 3-torsion in arithmetic progressions (Theorem \ref{thm_technical_torsion_ap}) and the 
proof of Theorem \ref{thm_torsion_progressions}.

\subsection{3-torsion in quadratic fields}\label{subsec_torsion}

As in \cite{DH} and \cite{BBP}, we use the folllowing classical result of Hasse, which is proved using class field theory:
There is a bijection between pairs of {\itshape nontrivial} 3-torsion elements in quadratic fields $L$ with
$0 < \pm \Disc(L) < X$, and cubic fields $K$ with $0 < \pm \Disc(K) < X$ which are not totally ramified at any prime.
Under this bijection $\Disc(L) = \Disc(K)$, and the condition on $K$ is equivalent to requiring that $\Disc(K)$
be fundamental.

It therefore suffices to count cubic fields which are nowhere totally ramified. Write $M_3^{\pm}(X)$ and $M_3^{\pm}(q, X)$
for the counting functions of such fields. We may count these fields by replacing the $q$-nonmaximal zeta functions
with ``$q$-nonmaximal-or-totally-ramified'' zeta functions; in the langauge of Davenport and Heilbronn, we shrink
the set $U_p$, defined in Proposition \ref{def_up}, to a new set $V_p$, which excludes cubic orders which are
totally ramified at $p$. We then adjust Definition \ref{def_q_nonmax} to incorporate this condition, and we write
$a'_q(n)$ for the coefficients of our modified zeta function. 

We may estimate $M_3^{\pm}(X)$ using the same proof. By Proposition \ref{prop_shintani_reduction}, we have
\begin{equation}\label{eqn_torsion_reformulation}
M_3^{\pm}(X) = \frac{1}{2} \sum_{q \geq 1} \mu(q) \bigg( \sum_{n \leq x} a'_q(n) \bigg) - \frac{3}{\pi^2} X + O(X^{1/2}),
\end{equation}
and Lemma \ref{lemma_BBP} establishes that we may again truncate the sum to $q \leq Q$ with error $\ll X/Q^{1 - \epsilon}$.

We estimate the sums of the $a'_q(n)$ in the same way as before. The cubic Gauss sum appearing implicitly in the analogue of
\eqref{eqn_first_bq} is a little bit different, and we apply the bounds in Proposition \ref{prop_high_middle_bound2}.
In place of \eqref{eqn_used_CN}, we obtain

\begin{equation}\label{eqn_used_CN2}
y^{- \rho} \Delta_y^{\rho} [N_q^{\rho}(X) - R_q^{\rho}(X)]
\ll q^{7/8 + \epsilon} X^{3/8} + q^{1 + \epsilon} X^{3/8} z^{3/8} \bigg( 1 + \bigg(\frac{X^3}{y^4 z}\bigg)^{\rho/4} \bigg),
\end{equation}
as long as $z \geq q^{-2}$.

We split the sum over $q \leq Q$ into, say, $q \leq X^{1/6}$ and $X^{1/6} < q \leq Q$. We choose $y = X/Q$ as before.
In the range $q \leq X^{1/6}$ we choose $z = 1$, and the contribution of this range to $M(X) - R_Q(X)$ is
\begin{equation}
\ll (X^{1/6})^{15/8 + \epsilon} X^{3/8} + 
(X^{1/6})^{2 + \epsilon} X^{3/8}  \bigg( 1 + \bigg(\frac{X^3}{y^4}\bigg)^{\rho/4} \bigg),
\end{equation}
which is bounded by $X^{17/24 + \epsilon}$ so long as $y \geq X^{3/4}$. 
In the range  $X^{1/6} < q \leq Q$, we choose
$z = X^3 / y^4$ as before, and check that $z \geq q^{-2}$ for each $q$. We obtain a contribution to 
$M(X) - R_Q(X)$ of
\begin{equation}\label{eqn_chooseQ_1a}
Q^{15/8 + \epsilon} X^{3/8} + Q^{7/2 + \epsilon} + X^{1 + \epsilon}/Q.
\end{equation}
Because of the new first term, the optimal choice is $Q = X^{5/23}$, which gives an error
term of $X^{18/23 + \epsilon}$.

The remainder of the machinery
of Section \ref{sec_proof} works unchanged. We compute the residues of the new Shintani zeta functions using the tables
in \cite{TT}. We obtain, analogously to \eqref{eqn_plugged_in},

\begin{multline}\label{eqn_plugged_in2}
R_Q^{\pm}(X) = X \sum_{q \leq Q}
\mu(q) \bigg( \alpha^{\pm} \prod_{p | q} \bigg(\frac{2}{p^2} - \frac{1}{p^4} 
\bigg) + \beta \prod_{p | q} \bigg(\frac{2}{p^2}
- \frac{1}{p^4} \bigg) \bigg)
\\
+ \frac{6}{5} \gamma^{\pm} X^{5/6} \bigg( \sum_{q \leq Q} \mu(q) \prod_{p | q}
\bigg(\frac{1}{p^{5/3}} + \frac{2}{p^2} - \frac{1}{p^{8/3}} - \frac{1}{p^3}\bigg) \bigg),
\end{multline}
and therefore
\begin{equation}\label{eqn_torsion_alternate}
M_3^{\pm}(X) = \frac{1}{2 \zeta(2)^2} \alpha^{\pm} X +
\frac{3}{5} \gamma^{\pm} X^{5/6} \prod_p \bigg(
1 - \frac{1}{p^{5/3}} - \frac{2}{p^2} + \frac{1}{p^{8/3}}+ \frac{1}{p^3} \bigg)
+ O(X^{18/23 + \epsilon}).
\end{equation}
This is equivalent to \eqref{eqn_torsion1} by the formulas $\zeta(2) = \frac{\pi^2}{6}$ and $\Gamma(1/3)\Gamma(2/3) =
\frac{2 \pi}{\sqrt{3}}$.

\begin{remark} As we remarked previously, we can obtain an error term of $O(X^{9/11 + \epsilon})$
without appealing to the more difficult results of Section \ref{section_torsion_bounds}. To do this, we use only the simple bound
\eqref{eqn_trivial_bound_2} from Section \ref{section_torsion_bounds}; equivalently, we observe that when we 
replace $U_p$ with $V_p$, this multiplies each term in \eqref{eqn_from_CN} by a factor of at most $q$. (This follows
from observing that the bounds in Lemma \ref{lem_gauss_sum2} are at most $p$ times those in Lemma \ref{lem_gauss_sum}.)
This yields, in place of 
\eqref{eqn_choose_Q} and \eqref{eqn_chooseQ_1a}, the bound
\begin{equation}
M(X) - R_Q(X) \ll Q^{9/2 + \epsilon} + X^{1 + \epsilon}/Q,
\end{equation}
and we obtain an error term of $X^{9/11 + \epsilon}$ by choosing $Q = X^{2/11}$.
\end{remark}

\subsection{Generalizations involving local conditions}\label{subsec_local_specs} In this section we will discuss the proof of Theorem \ref{thm_rc_extended}.
By a
{\itshape local specification} $\calS_p$ at $p$ we mean a choice of one or more maximal cubic rings $R/\Z_p$, and
we say that a cubic field $K$ satisfies $\calS_p$ if
$\calO_K \otimes \Z_p$ is isomorphic to one of these $R$. 
For each $p$, there are finitely many possibilities for $R$, and they
may be detected by the Delone-Faddeev correspondence modulo 16 (if $p = 2$), 27 ($p = 3$), or $p^2$ $(p > 3)$.

\begin{remark}
Our methods also allow us to count nonmaximal cubic orders with various conditions, but for the sake of
simplicity we have excluded this possibility.
\end{remark}

The possibilities for $R$ are in bijection with extensions of $\Q_p$ of degree at most 3, 
which have been completely classified. For the classification we refer to the comprehensive paper and database of Jones
and Roberts \cite{JR}. We also note that the framework we describe here appeared in Roberts' paper \cite{R}.

In the tables that follow we list the following information: We list all possibilities for $R$, and we recall that 
different choices of $R$ 
detect the different splitting types of $p$ in $K$. If $p$ is totally
split, partially split, or inert, then $R$ is respectively equal to 
$\Z_p^3$, $\Z_p \times \calO_F$, or $\calO_L$, where $\calO_F$ and $\calO_L$ are the integer rings of the unique 
unramified quadratic and cubic extensions of $\Q_p$.
If $p$ is partially or totally ramified, then
$R$ is
$\Z_p \times \calO_{F'}$ or $\calO_{L'}$, where $\calO_{F'}$ and $\calO_{L'}$ are the integer rings of
ramified quadratic and cubic extensions of $\Q_p$. 
Depending on the value of $p$, there may be multiple possibilities for $F'$ and $L'$, and we list polynomials
generating each possible extension.

We also list the ``conductor'' $p^e$ for each choice of $R$. By this we mean the following: Suppose that $x \in V_{\Z}$ corresponds to a cubic
ring $\calO / \Z$. Then we say that $R$ has conductor $p^e$ if the condition $\calO \otimes \Z_p \cong R$ may be
detected by reducing $x$ modulo $p^e$, and if $p^e$ is the minimal integer with this property. We have $e \leq 4$ in all cases and $e \leq 2$ if $p > 3$, 
and these quantities naturally appear in our error terms.

Finally, we list the {\itshape local densities} at $s = 1$ and $s = 5/6$. The densities in the table are
unnormalized, and for each prime $p$ we normalize by dividing by the normalizing factors
\begin{equation}\label{def_cpkp1}
C_p := 1 + \frac{1}{p} + \frac{1}{p^2}, \ \ \
K_p := \frac{(1 - p^{-5/3})(1 + p^{-1})}{1 - p^{-1/3}}
\end{equation}
for $s =1$ and $s = 5/6$. (These quantities are simply the sum of the local densities.)
The constants $C(\calS)$ and $K(\calS)$ appearing in Theorem \ref{thm_rc_extended} are then 
given by the products of the normalized local densities at $s = 1$ and $s = 5/6$ respectively.
We also include a factor
of $C^{\pm}$ or $K^{\pm}$ according to the sign of the discriminants being counted; in light of the adelic
origin of our residue formulas, one should consider this choice of sign to be a local specification at the infinite
place.

\begin{remark} The normalized local density at $s = 1$ has a simple geometric interpretation. Recall
that our count of cubic fields incorporated, for each prime $p$, a factor of $(1 - p^{-2})(1 - p^{-3})$
corresponding to the proportion of cubic rings which are maximal at $p$. The density at $p$ is
simply the proportion of maximal cubic rings which have a given splitting type. Under the Delone-Faddeev
correspondence, this may then be
determined by counting $\GL_2(\Z / p^e \Z)$ orbits on $V_{\Z / p^e \Z}$.

To give an example, we compute the density of rings which are totally split at a prime $p$.
Under Delone-Faddeev, these consist of a single $\GL_2(\Z / p \Z)$-orbit on $V_{\Z / p \Z}$ whose stabilizer
has order 6. We then
verify that
\begin{equation}\label{example_density}
\frac{\frac{1}{6} \# \GL_2(\Z / p \Z)}{p^4}
= \frac{1}{6} \bigg(1 - \frac{1}{p} \bigg) \bigg(1 - \frac{1}{p^2} \bigg)
= \frac{1/6}{1 + p^{-1} + p^{-2}}
 \cdot (1 - p^{-2}) (1 - p^{-3} ).
\end{equation}
The {\itshape unnormalized} density of any cubic ring $R/\Z_p$ is equal to $\frac{1}{|\Disc(R)|_p |\Aut(R)|},$
and the normalization factor has a geometric interpretation which is described in \cite{DW2} or Proposition 8.8 of \cite{TT}.
The density at $s = 5/6$ can also be interpreted in a similar but more complicated way.
\end{remark}
This brings us now to our tables:
\begin{center}
\begin{tabular}{ l | c c  c }
Condition at $p$ & \textnormal{Conductor} & \textnormal{Density at } $s = 1$ & \textnormal{Density at} $s = 5/6$ \\ \hline
\textnormal{Totally split} & $p$ &
$1/6$ & $(1 + p^{-1/3})^3/6$  \\ \hline
\textnormal{Partially split} & $p$ &
$1/2$ & $(1 + p^{-1/3})(1 + p^{-2/3})/2$ \\ \hline
\textnormal{Inert} & $p$ &
$1/3$ & $(1 + p^{-1})/3$ \\ \hline
\textnormal{Partially ramified} & $p^2$ &
$1/p$ & $(1 + p^{-1/3})^2 / p$ \\
$\ \ (p \neq 2) \ x^2 + a u^2 p$ & $p^2$ & $\times \frac{1}{2}$ & $\times \frac{1}{2}$ \\ \hline
\textnormal{Totally ramified} & $p^2$ &
$1/p^2$ & $(1+ p^{-1/3}) / p^2$ \\
$\ \ (p \equiv 2 \ (\textmod \ 3))$ & $p^2$ & $\times 1$ & $\times 1$ \\
$\ \ (p \equiv 1 \ (\textmod \ 3)) \ x^3 + a u^3 p$ & $p^2$ & $\times \frac{1}{3}$ & $\times \frac{1}{3}$
\end{tabular}
\end{center}
In the ramified case, the fields generated by $x^2 + au^2$ and $x^3 + au^3$ are isomorphic for
any $u \in (\Z / p \Z)^{\times}$, but are distinct as $a$ ranges over the quadratic or cubic residue classes.
The notation $\times \frac{1}{2}$ (for example) means that the local density is halved for each of the two
cases.

\begin{remark} The densities at $s = 5/6$ appear in a modified form in Proposition 5.3 of Datskovsky-Wright \cite{DW2}.
All of our subsequent density tables also depend closely on Datskovsky and Wright's work.
\end{remark}

At $p =2$ and $p = 3$ there are additional possibilities, because there 
are more ramified maximal quadratic rings over $\Z_2$ and cubic rings over $\Z_3$. We list all of
the possibilities in the following tables, following the database \cite{JR}. We list each ring by giving
a generating polynomial over $\Z_2$ or $\Z_3$. (Where a choice of $\pm$ and/or $u$ is listed, each choice generates a
different ring.) The values for the conductor were obtained by explicitly calculating the $\GL_2$-orbits on $V_{\Z/16\Z}$
and $V_{\Z/27\Z}$ using PARI/GP.

The densities are given as multiples
of the local densities in the table above.\footnote{The 
multipliers of $\frac{1}{27}$ occurring in the $p = 3$ table
were mistakenly printed as $\frac{1}{81}$ in \cite{R}.}
To compute these, recall that the local densities at $s = 1$ are given by
$\frac{1}{|\Disc(R)|_p |\Aut(R)|}$. Note that $|\Aut(R)| = 3$ for $R = \Z_3[x]/(x^3 - 3x^2 + 3u)$ ($u = 1, 4, 7$)
and $|\Aut(R)| = 1$ for the other extensions of $\Z_3$ listed. Moreover, it follows from our work in \cite{TT} that 
the density multipliers at $s = 5/6$ are the same as those for $s = 1$.

\begin{center}
\begin{tabular}{ l | c c  c }
Polynomial over $\Z_2$ & \textnormal{Conductor} & \textnormal{Density multiplier} \\ \hline
$x^2 + 2x \pm 2$ & $2^3$ & $\times \frac{1}{4}$ \\
$x^2 \pm 2u \ (u = 1, 3)$ & $2^4$ & $\times \frac{1}{8}$  \\
\end{tabular}
\end{center}
\begin{center}
\begin{tabular}{ l | c c  c }
Polynomial over $\Z_3$ & \textnormal{Conductor} & \textnormal{Density multiplier} \\ \hline
$x^3 \pm 3x + 3$ & $3^2$ & $\times \frac{1}{3}$ \\
$x^3 + 3x^2 + 3$ &  $3^2$ & $\times \frac{1}{9}$ \\
$x^3 - 3x^2 + 3u \ (u = 1, 4, 7)$ &  $3^3$ & $\times \frac{1}{27}$ \\
$x^3 + 3u \ (u = 1, 4, 7)$ & $3^3$ & $\times \frac{1}{27}$ \\
\end{tabular}
\end{center}

We are now prepared to prove Theorem \ref{thm_rc_extended}.
Consider a set of local specifications $\calS_p$ at a finite set of primes $\calP$.
(We also write $\calP = \prod_{p \in \calP} p.$)
For each $q$ coprime to $\calP$,
we define zeta functions
\begin{equation}
\xi^{\pm}_{\calS, q}(s) := \sum_{x \in \SL_2(\Z) \backslash V_{\Z}}
\frac{1}{|\Stab(x)|} \Phi_q(x) \Phi_{\calS}(x) |\Disc(x)|^{-s},
\end{equation}
where $\Phi_q$ and $\Phi_{\calS}$ are the characteristic functions of those $x$ nonmaximal at $q$ and satisfying
$\calS$, respectively. (Observe that maximality at $\calP$ is built into our local specifications.)

As established in \cite{TT}, and originally proved by Datskovsky and Wright, these 
zeta functions again have analytic continuations and functional equations of the same shape,
with residues
\begin{equation}\label{eqn_residue_ext}
\Res_{s = 1} \xi^{\pm}_{\calS, q}(s) = 
\alpha^{\pm} \mathscr{A}(\calS) \prod_{p | q} \bigg(
\frac{1}{p^2} + \frac{1}{p^3} - \frac{1}{p^5} \bigg)
+
\beta \mathscr{B}(\calS) \prod_{p | q} \bigg(
\frac{2}{p^2} - \frac{1}{p^4} \bigg),
\end{equation}
\begin{equation}\label{eqn_residue_ext2}
\Res_{s = 5/6} \xi^{\pm}_{\calS, q}(s) = \gamma^{\pm} \mathscr{C}(\calS) \prod_{p | q} \bigg(
\frac{1}{p^{5/3}} + \frac{1}{p^2} - \frac{1}{p^{11/3}} \bigg).
\end{equation}

The quantities $\mathscr{A}(\calS), \mathscr{B}(\calS), \mathscr{C}(\calS)$ are evaluated in \cite{TT} in terms of
certain adelic integrals.
They are naturally multiplicative (e.g., $\mathscr{A}(\calS) = \prod_{p \in \calP} \mathscr{A}(\calS_p)$),
and when $\calS_p$ is the set of all maximal cubic rings over $\Z_p$, 
we have
\begin{equation}\label{eqn_adelic_ints}
\mathscr{A}(\calS_p) = \bigg(1 - \frac{1}{p^2} \bigg) \bigg(1 - \frac{1}{p^3} \bigg), \ \
\mathscr{B}(\calS_p) = \bigg(1 - \frac{1}{p^2} \bigg)^2, \ \
\mathscr{C}(\calS_p) = \bigg(1 - \frac{1}{p^{5/3}} \bigg) \bigg(1 - \frac{1}{p^2} \bigg).
\end{equation}
For a general local specification, 
$\mathscr{A}(\calS_p)$
and $\mathscr{C}(\calS_p)$ are equal to the product of \eqref{eqn_adelic_ints} and the 
normalized local densities at $s = 1$ and at $s = 5/6$ given above.
Observe that our example in \eqref{example_density} exactly computes such an $\mathscr{A}(\calS_p)$.

To determine $\mathscr{B}(\calS_p)$ in general, we multiply the expression
in \eqref{eqn_adelic_ints} by
the {\itshape reducible} local density.
The unnormalized reducible densities are given by the following table, and we normalize them
by dividing by $1 + 1/p$.
\begin{center}
\begin{tabular}{ l | c }
Condition at $p$ & \textnormal{Reducible density} \\ \hline
\textnormal{Totally split} & $1/2$ \\ \hline
\textnormal{Partially split} & $1/2$ \\ \hline
\textnormal{Inert} & $0$ \\ \hline
\textnormal{Partially ramified} & $1/p$ \\ \hline
\textnormal{Totally ramified} & $0$ \\ 
\end{tabular}
\end{center}
The partially ramified case is divided into two or (for $p = 2$) six subcases, and the density multipliers are the same as before.

\begin{remark} The reducible density can be described in terms of the geometric interpretation given
for the irreducible ($s = 1$) density; the difference is that totally split points are counted triple and inert or
totally ramified points are not counted at all. If $\calS_p$ counts only one of these 
types of points, then $\mathscr{B}(\calS_p)$ is equal to $3 \mathscr{A}(\calS_p),$
$\mathscr{A}(\calS_p),$ or zero as appropriate. In particular, the ratio of $\mathscr{A}(\calS_p)$ and
$\mathscr{B}(\calS_p)$ in \eqref{eqn_adelic_ints} is equal to the ratio of the appropriate normalizing factors.
\end{remark}

We now ready to prove Theorem \ref{thm_rc_extended}, closely following the proof of Theorem \ref{thm_rc}.
Write $N = N(\calS) = \prod_{p \in \calP} p^{e_p}$, so that $\Phi_{\calS}(x)$ is well defined on $V_{\Z / N \Z}$.
Then the Fourier transform of $\Phi_{\calS}(x)$ is given by
\begin{equation}
\widehat{\Phi}_{\calS}(x) = \frac{1}{N^4} \sum_{y \in V_{\Z / N \Z}}
\Phi_{\calS}(y) \exp(2 \pi i [x, y]/N),
\end{equation}
and this Fourier transform appears in the dual zeta function
\begin{equation}\label{def_shintani_dual_spec}
\widehat{\xi}^{\pm}_{\calS,q}(s) := \sum_{x \in \SL_2(\Z) \backslash \widehat{V}_{\Z}} \frac{1}{|\Stab(x)|}
\widehat{\Phi}_q(N^{-1} x) \widehat{\Phi}_{\calS}(q^{-2} x) \big( |\Disc(x)| / (q^8 N^4 )\big)^{-s}.
\end{equation}
Here $N^{-1}$ and $q^{-2}$ are multiplicative inverses of $N$ and $q^2$ modulo $q$ and $N$ respectively. 
Observe that $\widehat{\Phi}_q(N^{-1} x) = \widehat{\Phi}_q(x)$.
We require a bound on $\widehat{\Phi}_{\calS}(q^{-2} x)$, but
the trivial bound 
$|\widehat{\Phi}_{\calS}(x)| \leq 1$ 
suffices for an interesting result. In particular, if we write $\widehat{\xi}^{\pm}_q(s) = \sum b_q(\mu_n) \mu_n^{-s}$
as before, then the series $\widehat{\xi}^{\pm}_{\calS, q}(s)$ is bounded coefficientwise by
$\sum |b_q(\mu_n)| (\mu_n/N^4)^{-s}$. 
In Proposition \ref{prop_high_middle_bound}
we now sum over $\mu_n/N^4 > z$ and $\mu_n/N^4 < z$ respectively, so our
bounds on the partial sums of \eqref{def_shintani_dual_spec}
are equal to $N^4$ times the bounds of Proposition \ref{prop_high_middle_bound}.

This factor of $N^4$ appears in all of our estimates involving the dual zeta function, and 
the remainder of the analysis is now essentially unchanged.
Carrying out the analysis in Section \ref{sec_proof}, we obtain, in place of \eqref{eqn_choose_Q},
\begin{equation}\label{eqn_choose_Q2}
N(X, \calS) - R_Q(X, \calS) \ll Q^{7/2 + \epsilon} N(\calS)^4 + X^{1 + \epsilon}/Q.
\end{equation}
We optimize our error term by 
choosing $Q = X^{2/9} N(\calS)^{-8/9}$. Then the right side of \eqref{eqn_choose_Q2} is $X^{7/9} N(\calS)^{8/9}$, and this is the error
term appearing in Theorem \ref{thm_rc_extended}.\footnote{There is a discrepancy between the simple definition of
$e_p$ in Theorem \ref{thm_rc_extended} and the ``correct'' definition here when $p = 2$ or $3$, but this may be
absorbed into the implied constant.} The main terms of Theorem \ref{thm_rc_extended} are obtained from the residue
formulas \eqref{eqn_residue_ext} and \eqref{eqn_residue_ext2}.

It remains to prove that the contribution from {\itshape reducible} maximal cubic rings matches
the contribution of the $\mathscr{B}(\calS)$ term in \eqref{eqn_residue_ext}.
With the single exception of $\Z^3$, the reducible cubic rings are 
the rings $\Z \times \calO_F$, where $\calO_F$ is the ring of integers of a quadratic field. This
ring may have any of the splitting types above aside from the inert or totally ramified splitting types, and
these conditions depend only on the
discriminant modulo $M := 108 \prod_p p^{e_p}$. (Here $108 = 2^2 3^3$ is the gcd of the coefficients in the formula
\eqref{eqn_disc_formula} for the discriminant; in fact, $M := 4 \prod_p p^{e_p}$ is enough.)

The number of such rings is equal to the number of squarefree integers in certain arithmetic progressions modulo $M$,
except for 
special conditions at 2.
To handle these conditions we assume that $2^6 | M$, and sum over multiple residue classes modulo $2^6$ if necessary.
This distinguishes
among the 
eight choices of $\calO_F \otimes \Z_2$, and 
the relevant quadratic fields are counted by the following lemma. The proof is a relatively straightforward
generalization of (\cite{ten}, Chapter I.3.7, Theorem 9), so we omit the detail.

\begin{lemma} Assume that $2^6 | m$. Then the number of quadratic fields $F$ with $0 < \pm \Disc(F) < X$
and $\Disc(F) \equiv a \ (\textmod \ m)$ is equal (for each choice of sign) to
\begin{equation}\label{lem_count_reduc}
\frac{8}{\pi^2 m} X e(a, 2) \prod_{\substack{p > 2 \\ p^k || m, \ k \geq 1}} e(a, p^k) \bigg(1 - \frac{1}{p^2} \bigg)^{-1} + O(\sqrt{X}),
\end{equation}
where
\begin{equation}
e(a, p^k) := \left\{
\begin{array} {l l} 1 & \hbox{if } p \nmid a, \\
1 & \hbox{if } k \geq 2 \hbox{ and } p^2 \nmid a, \\
1 - 1/p & \hbox{if } k = 1 \hbox{ and } p | a, \\
0 & \hbox{otherwise}; \end{array} \right. \ \ \ (p \neq 2)
\end{equation}
\begin{equation}
e(a, 2) = :\left\{
\begin{array} {l l}
1 & \hbox{if } a \equiv 1 \ (\textmod \ 4), \\
1 & \hbox{if } a \equiv 8, 12 \ (\textmod \ 16), \\
0 & \hbox{otherwise. } \end{array} \right.
\end{equation}

\end{lemma}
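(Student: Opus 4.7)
The plan is to reduce the count to the classical estimate for squarefree integers in arithmetic progressions, after first stratifying fundamental discriminants according to their $2$-adic structure. Recall that every fundamental discriminant $D$ belongs to exactly one of three types: (i) $D$ odd squarefree with $D \equiv 1 \ (\textmod\ 4)$; (ii) $D = 8k$ with $k$ odd squarefree, which forces $D \equiv 8 \ (\textmod\ 16)$; (iii) $D = 4k$ with $k$ squarefree and $k \equiv 3 \ (\textmod\ 4)$, which forces $D \equiv 12 \ (\textmod\ 16)$. Since $2^{6} \mid m$, the residue $a$ modulo $16$ determines which (if any) of these types is compatible with $D \equiv a \ (\textmod\ m)$, and the three subcases of $e(a,2) = 1$ correspond exactly to these three types. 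When $e(a,2) = 0$ no fundamental discriminant lies in the progression and the estimate is trivial.

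For each compatible type the problem reduces, via the substitution $n = D$, $n = D/8$, or $n = D/4$ respectively, to counting squarefree integers $n \leq Y$ in a single arithmetic progression $n \equiv b \ (\textmod\ M)$, where $(Y, M) = (X, m)$, $(X/8, m/8)$, or $(X/4, m/4)$ and the odd residue of $n$ modulo a small power of $2$ is already forced by the type. The ratio $Y/M = X/m$ is the same in all three cases, which explains the common factor $X/m$ in the stated formula. I would evaluate each such count by the standard M\"obius inversion
\begin{equation*}
\sum_{\substack{n \leq Y \\ n \equiv b \ (\textmod\ M)}} \mu^{2}(n) = \sum_{d \leq \sqrt{Y}} \mu(d) \,\#\{ n \leq Y : d^{2} \mid n,\ n \equiv b \ (\textmod\ M)\}.
\end{equation*}
The inner cardinality is $Y \gcd(d^{2},M)/(Md^{2}) + O(1)$ whenever $\gcd(d^{2},M) \mid b$ and is zero otherwise. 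Summing the $O(1)$ errors over $d \leq \sqrt{Y}$ gives the total error $O(\sqrt{X})$, while the main term factors over primes as an Euler product.

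The local computation is routine: at each prime $p > 2$ with $p^{k} \parallel M$, a case analysis on $v_{p}(b)$ shows that the local factor (after pulling out the overall $1/M$) is exactly $e(a, p^{k}) (1 - 1/p^{2})^{-1}$, and primes $p \nmid M$ contribute the standard factor $1 - 1/p^{2}$. The constant $8/\pi^{2}$ rather than $6/\pi^{2} = 1/\zeta(2)$ arises because in all three types the integer $n$ is odd, so the prime $p = 2$ contributes $1$ to the Euler product rather than $1 - 1/4$, replacing $\prod_{p}(1 - 1/p^{2}) = 6/\pi^{2}$ by $\prod_{p \geq 3}(1 - 1/p^{2}) = 8/\pi^{2}$. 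The only real obstacle is bookkeeping: one must check that the rescalings of $Y$ and $M$ in types (ii) and (iii) cancel so as to yield the same leading coefficient $8X/(\pi^{2} m)$ in each case, and that the three-case definition of $e(a,2)$ exactly captures the $2$-adic classification of fundamental discriminants. Summing the three contributions against the indicator $e(a,2)$ then gives the stated formula.
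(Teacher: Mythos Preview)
Your proposal is correct and follows exactly the approach the paper has in mind: the paper simply omits the proof, stating that it ``is a relatively straightforward generalization of (\cite{ten}, Chapter I.3.7, Theorem 9),'' which is precisely the standard count of squarefree integers in arithmetic progressions via M\"obius inversion that you reduce to after the $2$-adic stratification of fundamental discriminants.
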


We then sum the result in \eqref{lem_count_reduc} over all appropriate residue classes. For example, if we are counting fields
split at $p$, we sum over those $a$ which are quadratic residues modulo $p$. We then check that this matches the total contribution from $\mathscr{B}(\calS)$, up to an error $\ll X^{1/2} N(\calS).$ This is smaller than our previous error term whenever our result is nontrivial, and this
completes the proof.

\begin{example} 
We illustrate our results by computing the expected number of fields $K$ with $0 < \Disc(K) < X := 2 \cdot 10^6$ which
are inert at 7 and partially ramified at 5.

Let $\calS$ denote the set of these two local specifications. Using the tables above, we compute that
\begin{equation}
C(\calS) = \frac{1/3}{1 + \frac{1}{7} + \frac{1}{49}} \cdot \frac{1/5}{1 + \frac{1}{5} + \frac{1}{25}} = \frac{245}{5301} = .046217 \cdots,
\end{equation}
\begin{equation}
K(\calS) = \frac{\frac{1}{3} \Big(1 + \frac{1}{7} \Big) \cdot \Big(1 - \frac{1}{7^{1/3}}\Big)}
{\Big(1 - \frac{1}{7^{5/3}} \Big) \Big(1 + \frac{1}{7} \Big)} \cdot 
\frac{\frac{1}{5} \Big(1 + \frac{1}{5^{1/3}} \Big)^2 \cdot \Big(1 - \frac{1}{5^{1/3}}\Big)}
{\Big(1 - \frac{1}{5^{5/3}} \Big) \Big(1 + \frac{1}{5} \Big)} = .030884 \cdots,
\end{equation}
and therefore expect to find
\begin{equation}
\approx .046217 \cdot \frac{1}{12 \zeta(3)} X  + .030884 
\frac{4 \zeta(1/3)}{5 \Gamma(2/3)^3 \zeta(5/3)} X^{5/6} = 6408.0\cdots - 812.7\cdots \approx 5595
\end{equation}
fields. Using PARI/GP to analyze the local behavior of the fields in Belabas' tables, we find that there are in fact
5546 such fields.
\end{example}

\subsection{Local conditions for 3-torsion problem in quadratic fields}\label{subsec_local_torsion}
We now turn to the proof of Theorem \ref{thm_rc_torsion2}. In this case, a local specification at $p$ consists of
a choice of $\Q(\sqrt{D}) \otimes \Q_p$. In contrast to the case of cubic fields, any local specification at a prime
$p > 2$ is determined by the residue class of 
$D \ (\textmod \ p^2)$, and a specification at 2 is determined by $D \ (\textmod \ 64)$.

As before, if $D$ is a fundamental discriminant,
subgroups of $\Cl(D)$ of index 3 are in bijection with 
cubic fields of discriminant $D$. Moreover, arguments from algebraic number theory show that
local specifications for $Q(\sqrt{D})$ correspond to local specifications for these cubic fields. In particular,
$p$ is inert in $\Q(\sqrt{D})$ if and only if it partially splits in the cubic fields, and $p$ splits if and only
if it is either inert or totally split in these cubic fields. If $p$ is ramified and $K$ is a cubic field
of discriminant $D$, then the correspondence is given by the isomorphism 
$K \otimes \Q_p \simeq (\Q(\sqrt{D}) \otimes \Q_p) \times \Q_p$.

The local densities $C'(\calS)$ and $K'^{\pm}(\calS)$ are therefore determined by the tables in Section
\ref{subsec_local_specs}.\footnote{We further check that introducing local specifications also multiplies the contribution
of the trivial element of class groups by $C'(\calS)$. In other words, the local densities of trivial and nontrivial 3-torsion
elements of the class group are the same at all finite places, but not at the infinite place. For this reason we don't write
$C'^{\pm}(\calS)$ here.}
The unnormalized densities are the same, and the normalization factors are given by
adding the unnormalized densities for all splitting types other than `totally ramified':
\begin{equation}\label{def_cpkp2}
C_p := 1 + \frac{1}{p}, \ \ \
K_p := 1 + \frac{1}{p^{1/3}} + \frac{1}{p^{2/3}} + \frac{2}{p} + \frac{2}{p^{4/3}} + \frac{1}{p^{5/3}}.
\end{equation} 

The proof is a straightforward combination of the proofs of Theorems \ref{thm_rc_torsion} and \ref{thm_rc_extended},
and the only new step occurs in our evaluation of the error term.  Again $\widehat{\xi}^{\pm}_{\calS, q}(s)$ is bounded coefficientwise by
$\sum |b_q(\mu_n)| (\mu_n/N^4)^{-s}$. This factor of $N^4$ appears in Proposition \ref{prop_high_middle_bound2};
note that the cutoffs of $q^{-3}$ and $q^{-2}$ for the ranges of $\mu_n$ there still apply to $\mu_n$ and not $\mu_n/N^4$.

As before we split into small and larger ranges of $Q$, and in the larger range 
we obtain, in place of \eqref{eqn_choose_Q}, \eqref{eqn_chooseQ_1a}, and \eqref{eqn_choose_Q2},
\begin{equation}\label{eqn_choose_Q2a}
Q^{15/8 + \epsilon} X^{3/8} N^{5/2} + Q^{7/2 + \epsilon} N^4 + X^{1 + \epsilon}/Q,
\end{equation}
where the first two terms correspond to the ranges $\mu_n < q^{-2}$ and $q^{-2} < \mu_n < z N^4$ respectively.
Our theorem follows by choosing $Q = X^{5/23} N(\calS)^{-20/23}$.

\begin{remark} To illustrate our method, we remark that we 
could formally derive Theorem \ref{thm_rc_torsion} as a consequence of Theorem
\ref{thm_rc_extended}, by imposing the local condition `not totally ramified' at every prime. This would
not treat the error terms in an acceptable manner, but this would essentially
amount to a variation of the same proof.
\end{remark}

\subsection{Arithmetic progressions}\label{subsec_ap}
This brings us to the problem of counting fields in arithmetic progressions. We wish to simultaneously allow
local specifications as in the last section, possibly to the same moduli.

As in prime number theory, we can approach this question by twisting by Dirichlet characters.
If $\chi$ is a Dirichlet character $(\textmod \ m)$, write
\begin{equation}\label{eqn_def_twisted}
N_3^{\pm}(X, \chi) := \sum_{\substack{[K : \Q] = 3 \\ 0 < \pm \Disc(K) < X}} \chi(\Disc(K)).
\end{equation}
Then
if $(a, m) = 1$, we have the usual orthogonality relation
\begin{equation}\label{eqn_def_ap}
N_3^{\pm}(X; m, a) := \sum_{\substack{[K : \Q] = 3 \\ 0 < \pm \Disc(K) < X \\ \Disc(K) \ \equiv \ a \ (\textmod \ m)}} 1
= \frac{1}{\phi(m)} \sum_{\chi \ (\textmod \ m)} \overline{\chi}(a) N_3^{\pm}(X, \chi).
\end{equation}

In addition, estimating $N_3^{\pm}(X; m, a)$ is nontrivial when $(a, m) > 1$. 
An appropriate choice of local specifications $\calS$ allows us to select exactly those cubic fields whose
discriminants are divisible by $(a, m)$, and we have
\begin{equation}
N_3^{\pm}(X; m, a) = \frac{1}{\phi(m)} \sum_{\chi \ \big(\textmod \ \frac{m}{(a, m)}\big)} \overline{\chi}\Big( \frac{a}{(a, m)} \Big)
\sum_{\substack{[K : \Q] = 3 \\ 0 < \pm \Disc(K) < X \\ K \in \calS}} \chi\bigg( \frac{\Disc(K)}{(a, m)} \bigg).
\end{equation}
Presuming we can estimate the right hand side, we obtain estimates for $N_3^{\pm}(X; m, a)$ in the same way.

We will obtain estimates for $N_3^{\pm}(X; m, a)$ for any values of $m$ and $a$, subject to an arbitrary set of local specifications.
To do so, we introduce
{\itshape orbital $L$-functions}, which are Shintani zeta functions twisted by Dirichlet characters.

{\bf Notation and conventions}. Although we will work in complete generality, we introduce several simplifying reductions which still allow
us to recover results in the general case.

As we are allowing arbitrary local specifications, it suffices to work with {\itshape primitive} characters.
Assume that we are given a primitive Dirichlet character $\chi \ (\textmod \ m)$
and a set of local specifications $\calS = (\calS_p)_{p \in \calP}$. By refining $\calS$ if necessary, we 
assume that $\calP$ includes all primes dividing $m$
and that for each $p \in \calP$, $\calS_p$ consists of a single choice for $\calO_K \otimes \Z_p$. We may
handle imprimitive characters by introducing local specifications corresponding to the condition
$\chi(n) = 0$, and we obtain results
for more general local specifications (including the set of all maximal cubic rings) by summing over the $\ll X^{\epsilon}$
possible choices of $\calS$.

We define quantities $f_p \geq 0$ by  $m = \prod_{p \in \calP} p^{f_p}$, and we define characters 
$\chi_p \ (\textmod \ p^{f_p})$ by the formula 
$\chi(u) = \prod_p \chi_p(u)$. We always regard the characters $\chi_p$ as primitive characters modulo
$p^{f_p}$, except to make sense of this formula (where we regard them as imprimitive characters modulo $m$).
In case $f_p = 0$, $\chi_p$ is the trivial character modulo 1.

We define quantities $r_p \geq 0$ to be the $p$-adic valuations of the discriminants of the cubic rings $R_p/\Z_p$ 
specified by $\calS_p$, and we write $r = \prod_p p^{r_p}$. Then our local specifications $\calS$ include a restriction
to those fields $K$ whose discriminants satisfy $r | \Disc(K)$ and $(m, \Disc(K) / r) = 1$. In particular, this implies
that  $\chi(\Disc(K)/r) \neq 0$ for each $K$ being counted.

As before, we define quantities $e_p \geq 0$ such that $\calS_p$
may be detected by reducing the Delone-Faddeev correspondence modulo $p^{e_p}$.
Finally, we define an integer $N$ (the ``conductor'') by $N = \prod_p \lcm(p^{f_p + r_p}, p^{e_p})$. Then all
of the conditions described above may be detected by reducing the Delone-Faddeev correspondence modulo $N$,
and apart from possible factors of 2 and 3, $N$ is the minimal integer with this property.

Subject to the assumptions and notation above, we define
\begin{equation}\label{eqn_def_n3}
N_3^{\pm}(X, \calS; r, \chi) := \sum_{\substack{0 < \pm \Disc(K) \leq X \\ K \in \calS}} \chi \bigg(\frac{\Disc(K)}{r} \bigg),
\end{equation}
and it is this quantity\footnote{Observe that it is redundant to include $r$ in our notation on the left, but we believe
this notation is clearest.}
 we will estimate. To do this, we introduce,
for each $q$ coprime to $N$,
an orbital $L$-function
\begin{equation}\label{eqn_def_shintani_orbital}
L^{\pm}_{\calS, q}(s, r, \chi) := \sump_{x \in \SL_2(\Z) \backslash V_{\Z} } \frac{\chi(\Disc(x) / r)}{|\Stab(x)|} |\Disc(x)|^{-s},
\end{equation}
where the sum is restricted to $x$ which are of the correct sign, nonmaximal at $q$, and satisfy the local specifications
given by $\calS$. It satisfies the same functional equation as before, with formulas for the residues and the dual zeta
function to be described later.

It was proved by Datskovsky and Wright \cite{DW2} that these $L$-functions are entire whenever $\chi^6 \neq 1$. In this case,
$N_3^{\pm}(X, \calS; r, \chi)$ will consist only of an error term. Therefore, we will assume throughout that $\chi^6 = 1$, except as
noted in the proof of Theorem \ref{thm_technical_ap}.

We may further reduce to the case where $\chi$ is a primitive {\itshape cubic} character as follows. If 
$\chi$ is not cubic, then some $\chi_p$ is not cubic, so write $\chi_p = \psi_p \phi_p$ where $\psi_p$ is cubic or trivial and
$\phi_p$ is quadratic. Recall that $\mathcal{S}$ specifies a single choice $R_p$ for the cubic ring $\calO_K \otimes \Z_p$,
and we note that $\Disc(R_p)/p^{r_p}$ is well-defined as an element of $\Z_p^{\times} / (\Z_p^{\times})^2$. It follows that
$\phi_p(\Disc(x)/r) = \phi_p(\Disc(R_p)/p^{r_p}) \phi_p(r/p^{r_p})$ for any $x$ counted in \eqref{eqn_def_shintani_orbital},
and hence
$L^{\pm}_{\calS, q}(s, r, \chi) = \phi_p(\Disc(R_p)/p^{r_p}) \phi_p(r/p^{r_p}) L^{\pm}_{\calS, q}(s, r, \chi \phi_p).$

This brings us to our most general result on cubic fields:
\begin{theorem}\label{thm_technical_ap}
Assume the notation and conventions above. Then whenever $\chi^6 \neq 1$, we have
\begin{equation}\label{eqn_technical_ap_trivial}
N_3^{\pm}(X, \calS; r, \chi) = 
O(X^{7/9 + \epsilon} N^{8/9}).
\end{equation}
When $\chi^6 = 1$, we may reduce to the case $\chi^3 =1 $ as described above, in which case we have
\begin{equation}\label{eqn_technical_ap}
N_3^{\pm}(X, \calS; r, \chi) = 
\delta(\chi) C^{\pm}(\calS)
\frac{1}{12 \zeta(3)} X + 
K^{\pm}(\calS, \chi)
\frac{4 L(1/3, \chi)}{5 \Gamma(2/3)^3 L(5/3, \chi^2)} X^{5/6} + O(X^{7/9 + \epsilon} N^{8/9}),
\end{equation}
where $\delta(\chi)$ is $1$ if $\chi$ is trivial and $0$ otherwise, $C^{\pm}(\calS)$ is as in Section \ref{subsec_local_specs}, and $K^{\pm}(\calS, \chi)$ is described below.
\end{theorem}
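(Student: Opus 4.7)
The plan is to follow the blueprint of the proof of Theorem \ref{thm_rc} and its generalization Theorem \ref{thm_rc_extended}, making the minimal modifications needed to incorporate the character twist $\chi$ on top of the local specifications $\calS$. First, by a character-twisted analogue of Proposition \ref{prop_shintani_reduction} (with the reducible contributions handled using the quadratic-field counting lemma from Section \ref{subsec_local_specs}, now evaluated on arithmetic progressions and weighted by $\chi$), I reduce to the partial sums
\begin{equation*}
\sum_{q \leq Q} \mu(q) \sum_{n \leq X} a_{\calS, q, \chi}(n),
\end{equation*}
where the coefficients come from $L^{\pm}_{\calS, q}(s, r, \chi)$. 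The trivial bound $|\chi| \leq 1$ combined with Lemma \ref{lemma_BBP} justifies the truncation at $q \leq Q$ with an acceptable tail. When $\chi^6 = 1$ but $\chi^3 \neq 1$, the reduction described immediately before the theorem absorbs each quadratic component of $\chi$ into a sign factor, so it suffices to treat the case $\chi^3 = 1$; when $\chi^6 \neq 1$, the Datskovsky--Wright result that $L^{\pm}_{\calS, q}(s, r, \chi)$ is entire means no residues appear and only the error term survives.

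Next, I diagonalize $L^{\pm}_{\calS, q}(s, r, \chi)$ as in Section \ref{subsec_diag} and run the Chandrasekharan--Narasimhan contour-integration argument of Section \ref{subsec_contour}, with smoothing factor $(X-n)^\rho$ and finite differencing. Shifting the contour from $\Re(s) = c$ to $\Re(s) = 1-c$ picks up a pole at $s = 5/6$ whenever $\chi^3 = 1$, and a pole at $s = 1$ only when $\chi$ is the trivial character, producing the $\delta(\chi)$ prefactor on the main term. The key analytic input is a bound on partial sums of the dual $L$-function $\widehat{L}^{\pm}_{\calS, q}(s, r, \chi)$, whose coefficients arise as a Fourier transform on $V_{\Z/(qN)^2 \Z}$ of $\Phi_q(x) \Phi_{\calS}(x) \chi(\Disc(x)/r)$. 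As in Section \ref{subsec_local_specs}, the character and local-specification conductor contribute a uniform factor of $N^4$ on top of Theorem \ref{thm_trivial_bound} (the Fourier transform now lives on $\tfrac{1}{(qN)^2}\Z$ rather than $\tfrac{1}{q^2}\Z$), and the multiplicativity of the twisted Gauss sum from \cite{TT} lets the $q$ and $N$ contributions factor cleanly. Substituting into \eqref{eqn_choose_Q2} replaces it by $Q^{7/2+\epsilon} N^4 + X^{1+\epsilon}/Q$; choosing $Q = X^{2/9} N^{-8/9}$ then yields the claimed error $O(X^{7/9+\epsilon} N^{8/9})$, uniformly in $\chi$.

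The main obstacle will be the algebraic identification of the secondary main term as $K^{\pm}(\calS, \chi) \cdot \tfrac{4 L(1/3, \chi)}{5 \Gamma(2/3)^3 L(5/3, \chi^2)} X^{5/6}$. This requires computing $\Res_{s=5/6} L^{\pm}_{\calS, q}(s, r, \chi)$ as an Euler product, extending the residue formula \eqref{eqn_residue_56} to track the character twist at each prime using the local density tables of Section \ref{subsec_local_specs}; at primes $p \mid m$ this introduces the cubic Gauss-sum factors $\tau_p(\chi_p^2)$ from \eqref{eqn_thm_k1}. Summing $\sum_{q \leq Q} \mu(q) \Res_{s=5/6} L^{\pm}_{\calS, q}(s, r, \chi)$ over $q$ coprime to $N$ completes the Euler product at unramified primes into $L(1/3, \chi^{-2})/L(5/3, \chi^2)$ (with an error $\ll X^{5/6} Q^{-2/3+\epsilon}$ comfortably smaller than the main error term, since $L(5/3, \chi^2)$ is nonvanishing on $\Re(s) \geq 5/3$), while the factors from $p \in \calP$ assemble into the finite product defining $K^{\pm}(\calS, \chi)$. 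The analogous (but simpler) rearrangement at $s = 1$, valid only when $\chi$ is trivial, recovers $C^{\pm}(\calS) \cdot \tfrac{1}{12 \zeta(3)} X$ exactly as in the proof of Theorem \ref{thm_rc_extended}.
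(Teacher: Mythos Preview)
Your proposal is correct and follows essentially the same approach as the paper: absorb the character twist into the test function $\Phi_N(x) = \chi(\Disc(x)/r) \cdot \mathbf{1}_{\calS}(x)$, bound the dual by the trivial estimate $|\widehat{\Phi}_N| \leq 1$ so that the analysis of Section~\ref{subsec_local_specs} goes through with an extra factor of $N^4$, and choose $Q = X^{2/9} N^{-8/9}$ in \eqref{eqn_choose_Q2}. One small slip: in your Euler-product assembly of the secondary term you write $L(1/3,\chi^{-2})$, but since you have already reduced to $\chi^3 = 1$ this should be $L(1/3,\chi)$ as in the theorem statement (the $\chi^{-2}$ in \eqref{eqn_thm_k1} arises only because there the summation variable is a sextic character).
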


The quantity $K^{\pm}(\calS, \chi)$ is computed in \cite{TT}, although many of these computations are originally due to
Datskovsky and Wright (\cite{DW2}, Proposition 5.4).
We will simply state the results here. We have
$K^{\pm}(\calS, \chi) = K^{\pm} \prod_{p \in \calP} K(\calS_p, \chi)$, so it suffices to give the value of $K(\calS_p, \chi)$
for each $p$. Note that $K(\calS_p, \chi)$ depends on $\chi$ and not only $\chi_p$.

When $p$ does not divide $m$ (the conductor of $\chi$), $K(\calS_p, \chi)$ is given by dividing the appropriate value in the table
by the normalizing factor
\begin{equation}\label{def_kp_twisted}
K_{p, \chi} := \frac{(1 - \chi(p)^2 p^{-5/3})(1 + p^{-1})}{1 - \chi(p) p^{-1/3}}.
\end{equation}
\begin{center}
\begin{tabular}{ l | c | c }
Condition at $p$ & \textnormal{Density at} $s = 5/6$ & Value of $\phi_p$ ($p \neq 2$) \\ \hline
\textnormal{Totally split} & $(1 + \chi(p) p^{-1/3})^3/6$ & $1$ \\ \hline
\textnormal{Partially split} & $(1 + \chi(p) p^{-1/3})(1 + \chi(p)^2 p^{-2/3})/2$ & $-1$ \\ \hline
\textnormal{Inert} & $(1 + p^{-1})/3$  & $1$ \\ \hline
\textnormal{Partially ramified} & $(1 + \chi(p) p^{-1/3})^2 / p$ & -- \\
$\ \ (p \neq 2) \ x^2 + a u^2 p$ & $\times \frac{1}{2}$ & $\pm 1$ \\ \hline
\textnormal{Totally ramified}
& $(1+ \chi(p) p^{-1/3}) / p^2$ & \\
$\ \ (p \equiv 2 \ (\textmod \ 3))$ & $\times 1$ & $\phi_p(-3)$ \\
$\ \ (p \equiv 1 \ (\textmod \ 3)) \ x^3 + a u^3 p$ & $\times \frac{1}{3}$ & $\phi_p(-3)$
\end{tabular}
\end{center}
For convenience, we have also listed the value of $\phi_p(\Disc(R_p)/p^{r_p})$ for each row, where $\phi_p$ is
the nontrivial quadratic character mod $p$. Here 
$R_p$ is any cubic ring over $\Z_p$ of the given splitting type,
and $r_p$ is the $p$-adic valuation of $\Disc(R_p)$. 

In the partially or totally ramified cases for $p = 2$ or $3$, the density at $s = 5/6$ is given by the relative densities
given in our previous tables. The values of $\phi_3$ for totally ramified rings are given in the table below.

When $p$ does divide $m$, then either $p \equiv 1 \ (\textmod \ 3)$ or $p = 3$, and the results
are rather different. As each $\chi_p$ is primitive cubic, we note that
$\chi_p$ has conductor $p$, except $\chi_3$ which has conductor 9.

For $p \neq 3$, our results
involve (ordinary) Gauss sums
\begin{equation}
\tau_p(\chi_p) := \sum_{t \in \F_p^{\times}} \chi_p(t) e^{2 \pi i t/p}.
\end{equation}
In either case we divide the values given in the following tables by the normalizing factor
\begin{equation}
K_{p, \chi} := 1 + p^{-1}.
\end{equation}
For $p \neq 3$, we have
\begin{center}
\begin{tabular}{ l |  c | c}
Condition at $p$ & \textnormal{Density at} $s = 5/6$ & Value of $\phi_p$ $(p \neq 2)$ \\ \hline
\textnormal{Totally split} & $\tau_p(\chi_p^2) / 6p^2$ & $1$ \\ \hline
\textnormal{Partially split} & $- \tau_p(\chi_p^2) / 2p^2$ & $-1$ \\ \hline
\textnormal{Inert} &  $\tau_p(\chi_p^2) / 3p^2$ & $1$ \\ \hline
\textnormal{Partially ramified} & $\chi_p(4) \chi_{m/p}(p) p^{-4/3}$ & -- \\ 
$\ \ (p \neq 2) \ x^2 + a u^2 p$ & $\times \frac{1}{2}$ & $\pm 1$ \\ \hline
\textnormal{Totally ramified, $x^3 + a u^3 p$}
& $\big(\chi_p(a)^2 + \chi_p(a) \chi_{m/p}(p) p^{-1/3}\big)/3p^2$ & $\phi_p(-3)$
\end{tabular}
\end{center}
For $p = 3$, we have the following:
\begin{center}
\begin{tabular}{ l |  c | c}
Condition at $p$ & \textnormal{Density at} $s = 5/6$ & Value of $\phi_3$ \\ \hline
\textnormal{Totally split} & $\chi_p(4)/6p$ & $1$ \\ \hline
\textnormal{Partially split} & $\chi_p(4)/2p$ & $-1$ \\ \hline
\textnormal{Inert} &  $\chi_p(4)/3p$ & $1$ \\ \hline
\textnormal{Partially ramified} $(x^2 \pm 3)$ & 
$\pm (1 - \chi_p(2)) \chi_{m / p}(p) p^{-7/3}$ & $\mp 1$ \\ \hline
$x^3 + 3x + 3$ & $(\chi_p(4) - 1)/p^4$ & $-1$ \\
$x^3 + 6x + 3$ &  $(2\chi_p(4) + 1)/p^4$ & $1$ \\
$x^3 - 3x^2 + 3u \ (u = 1, 4, 7)$ & $\big(\chi_p(u) + \chi_{m/p}(p) p^{-1/3}\big)/p^5$ & $1$ \\
$x^3 + 3x^2 + 3$ &  $\chi_p(2) \chi_{m/p}(p)p^{-13/3}$ & $-1$ \\
$x^3 + 3u \ (u = 1, 4, 7)$ & $\chi_p(u) \chi_{m/p} (p) p^{-16/3}$ & $-1$
\end{tabular}
\end{center}
\label{tab:ap_3}
In the tables above, $\chi_{m/p}(n) := \prod_{p' | m, \ p' \neq p} \chi_{p'}(n)$.
\begin{proof}[Proof of Theorem \ref{thm_technical_ap}]
We define a test function $\Phi_N(x)$ to be
$\chi(\Disc(x)/r)$ when $x$ satisfies $\calS$, and zero otherwise.
Our assumptions ensure that $\Phi_N(x)$ is well-defined on $V_{\Z}$ and $V_{\Z / N \Z}$, 
and that $\chi(|\Disc(x)|/r) \neq 0$ for all $x$ being counted. For each $q$ coprime to $N$, we recall
the $L$-function
\begin{equation}
\xi^{\pm}_{\calS, q}(s, r, \chi) := \sum_{x \in \SL_2(\Z) \backslash V_{\Z}}
\frac{1}{|\Stab(x)|} \Phi_q(x) \Phi_N(x) |\Disc(x)|^{-s},
\end{equation}
defined in \eqref{eqn_def_shintani_orbital}.
The $L$-functions $L^{\pm}_{\calS, q}(s, r, \chi)$ again have analytic continuations
and satisfy the functional equation \eqref{eqn_shintani_FE}.

We proved the case $\chi = 1$ in Section \ref{subsec_local_specs}, so we now assume that $\chi$ is nontrivial.
Then
$L^{\pm}_{\calS, q}(s, r, \chi)$ is entire except for a pole at $s = 5/6$ if $\chi$ is cubic, in which case the residue is
\begin{equation}\label{eqn_56_res_ap}
\Res_{s = 5/6} L^{\pm}_{\calS, q}(s, r, \chi) =
\frac{2 \zeta(2) L(1/3, \chi)}{3 \Gamma(2/3)^3} K^{\pm}(\calS, \chi) \prod_{p | q}
\bigg(
\frac{\chi(p)^2}{p^{5/3}} + \frac{1}{p^2} - \frac{\chi(p)^2}{p^{11/3}} \bigg).
\end{equation}
Moreover, 
the dual $L$-function is given by
\begin{equation}\label{def_shintani_dual_AP}
\widehat{L}^{\pm}_{\calS,q}(s, r, \chi) := \sum_{x \in \SL_2(\Z) \backslash \widehat{V}_{\Z}} \frac{1}{|\Stab(x)|}
\widehat{\Phi}_q(x) \widehat{\Phi}_N(q^{-2} x) \big( |\Disc(x)| / (q^8 N^4 )\big)^{-s},
\end{equation}
where
\begin{equation}
\widehat{\Phi}_N(x) = \frac{1}{N^4} \sum_{y \in V_{\Z / N \Z}}
\Phi_N(y) \exp(2 \pi i [x, y]/N).
\end{equation}

Note in particular that \eqref{def_shintani_dual_AP} does not ``see'' the Dirichlet character apart from
$\widehat{\Phi}_N(q^{-2}x)$.

At this point we argue exactly as we did in Section \ref{subsec_local_specs}, estimating $|\widehat{\Phi}_N(q^{-2} x)| \leq 1$ as before. Everything works
in the same way, and we choose $Q = X^{2/9} N^{-8/9}$ in \eqref{eqn_choose_Q2}. When $\chi^3 = 1$ we obtain a $X^{5/6}$
term from the residue at $s = 5/6$, and when $\chi^6 \neq 1$ we obtain only the error term from \eqref{eqn_choose_Q2}.
\end{proof}
\begin{remark} Datskovsky and Wright state a version of \eqref{eqn_56_res_ap} (\cite{DW2}, p. 31), but with $L(1/3, \overline{\chi})$
in place of our $L(1/3, \chi)$. Based on \cite{TT} and our numerical computations, we believe that $L(1/3, \chi)$ is correct.
\end{remark}

\subsection{Examples and computations}\label{subsec_computations}
In this section we apply Theorem \ref{thm_technical_ap} to obtain formulas for the number of 
cubic field discriminants in arithmetic
progressions. We can do this for any arithmetic progression $a \ (\textmod \ m)$, with or without
local specifications. Unfortunately, our results
are complicated to state in general. Accordingly, we only work out the cases where 
there are no local specifications beyond those implied by our arithmetic progression, and where
either $(a, m) = 1$ or $m$ is a prime power.

We define $N_3^{\pm}(X; m, a)$ and $N_3^{\pm}(X, \chi)$ as in
\eqref{eqn_def_twisted} and \eqref{eqn_def_ap}.
Our results imply that for any $m$ and $a$,
\begin{equation}\label{eqn_simple_ap}
N_3^\pm(X; m,a)=C_1(m,a)\frac{C^\pm}{12\zeta(3)}X+K_1(m,a)\frac{4K^\pm}{5\Gamma(2/3)^3}X^{5/6}+O(m^{8/9} X^{7/9+\epsilon})
\end{equation}
for explicit constants $C_1(m, a)$ and $K_1(m, a)$. This follows from adding the result of Theorem \ref{thm_technical_ap} for
each specification at $p$, for all $p$ dividing $m$.
It remains to derive explicit formulas for $C_1(m, a)$ and $K_1(m, a)$.

\begin{remark} $C_1(m, a)$ is the density of discriminants congruent to $a$ modulo $m$, but no similar interpretation exists for $K_1(m, a)$. The
quotients $\frac{L(1/3, \chi)}{L(5/3, \chi^2)}$ are part of $K_1(m, a)$, and
$K_1(m, a)$ can be positive or negative.
\end{remark}

We begin with the case $(a, m) = 1$, which we stated in Theorem \ref{thm_progressions}.
If $4 \nmid m$, we readily deduce that
\begin{equation}
C_1(m, a) = \frac{1}{m} \prod_{p | m} \frac{1}{(1 - p^{-3})},
\end{equation}
and if $4 | m$ this is doubled or zero depending on $a \ (\textmod \ 4)$.

We turn now to $K_1(m, a)$. We only obtain contributions to \eqref{eqn_def_ap} from characters
$\chi$ with $\chi^6 = 1$, but we must consider imprimitive characters. For any $\chi$ with $\chi^6 = 1$,
we write $\chi = \psi \phi$, where $\psi$ is a primitive cubic character to a modulus dividing $m$, 
and $\phi$ is a possibly imprimitive quadratic character modulo $m$. We note however that the condition
$\chi(n) = 0$ is built into our local specifications, so the imprimitivity is irrelevant. 

We further decompose $\psi = \prod_p \psi_p$ and 
$\phi = \prod_p \phi_p$ as before,
and we have
\begin{equation}\label{eqn_ma}
K_1(m, a) = \frac{1}{\phi(m)} \sum_{\chi^6 = 1} 
\overline{\chi}(a) \frac{L(1/3, \psi)}{L(5/3, \psi^2)}
\prod_{p | m} \sum_{R_p} \phi_p(\Disc(R_p)) K(R_p, \psi),
\end{equation}
where for each $p$, the sum over $R_p$ is over all unramified cubic rings over $\Z_p$.

At this point we refer to our tables for $K(R_p, \psi)$ and make an interesting observation: For each prime $p > 3$, 
the sum over $R_p$ cancels if $\phi_p$ is trivial and $\psi_p$ is nontrivial, or vice versa. Therefore, the sum in \eqref{eqn_ma} is over
characters $\chi$ such that $\chi_p$ is of exact order 1 or 6 for each $\chi$, and for any such $\chi$ we have
$\psi = \chi^{-2}$.

We now use our tables to evaluate the sum over $R_p$. For $p \neq 3$, if $\chi_p$ is trivial, we have
\begin{equation}\label{eqn_trivial_case}
\sum_{R_p} \phi_p(\Disc(R_p)) K(R_p, \psi) = \frac{1 - \chi(p)^{-2} p^{-4/3}}{(1 - \chi(p)^2 p^{-5/3}) (1 + p^{-1})},
\end{equation}
and if $\chi_p$ is sextic then
\begin{equation}\label{eqn_sextic_case}
\sum_{R_p} \phi_p(\Disc(R_p)) K(R_p, \psi) = 
\frac{\tau_p(\chi_p^2)^3}{p^2 ( 1 + p^{-1})}.
\end{equation}
For $p = 3$, a bias appears modulo 9.
If $\psi_3$ is trivial then the sum over $R_p$ is again zero unless $\phi_3$ is also trivial, and 
\eqref{eqn_trivial_case} still holds. But if $\psi_3$ is nontrivial, the sum over $R_p$ is nonzero if
and only if $\phi_3$ is trivial. In this case, $\chi_3$ has conductor 9 and we have 
\begin{equation}\label{eqn_cubic_case}
\sum_{R_p} \phi_p(\Disc(R_p)) K(R_p, \psi) = 
\frac{\chi(4)}{4}.
\end{equation}
For $p = 2$, there are no cubic characters. There is the nontrivial quadratic character modulo 4, which we denote $\phi_4$.
 In this case we observe
that $\phi_4(R_2) = 1$ for all unramified cubic rings $R_2 / \Z_2$. Therefore $N^{\pm}_3(X, \chi)$ is the same
for $\chi = \phi_4$ and for $\chi$ the trivial character modulo 4. This reflects the fact that all field discriminants
are congruent to 1 modulo 4. 

There are two primitive quadratic characters modulo 8; fields $K$ which are totally split or totally inert at 2
have $\Disc(K) \equiv 1 \ (\textmod \ 8)$ and fields which are partially split have
$\Disc(K) \equiv 5 \ (\textmod \ 8)$. Since we restrict attention to one of these splitting types at a time,
twisting by these characters does not yield any additional information. There are no primitive
quadratic characters to moduli which are higher powers of 2, so fields equidistribute in
subprogressions modulo 16 and above.

In summary, we have proved the following:
\begin{proposition}\label{prop_ma} When $m$ is coprime to $a$ and not divisible by either 3 or 4, we have
\begin{equation}\label{eqn_ma_prop}
K_1(m, a) = \frac{1}{m} \prod_{p | m} \frac{1}{1 - p^{-2}} \sump_{\chi^6 = 1} 
\overline{\chi}(a) \frac{L(1/3, \chi^{-2})}{L(5/3, \chi^2)}
\prod_{\substack{{p | m} \\ p \nmid \textnormal{cond}(\chi)}}
 \frac{1 - \chi(p)^{-2} p^{-4/3}}{1 - \chi(p)^2 p^{-5/3}}
\prod_{\substack{{p | m} \\ p | \textnormal{cond}(\chi)}}
 \frac{\tau_p(\chi_p^2)^3}{p^2},
\end{equation}
where the sum is over primitive sextic characters $\chi$ to moduli dividing $m$, 
such that $\chi_p$ is of exact order $6$ for each $p$.

When $m$ is divisible by $3$, the same holds, except that $\chi_3$ must be of exact order $3$, 
and for $p = 3$ we substitute \eqref{eqn_cubic_case}
for \eqref{eqn_sextic_case}.

When $m$ is divisible by $4$, the above estimate is doubled if $a \equiv 1 \ (\textmod \ 4)$ and zero otherwise.
\end{proposition}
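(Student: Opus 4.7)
The plan is to start from the orthogonality relation \eqref{eqn_def_ap}, expressing $N_3^\pm(X;m,a)$ as $\frac{1}{\phi(m)}\sum_{\chi \ (\textmod \ m)} \overline{\chi}(a)\,N_3^\pm(X,\chi)$, and then invoke Theorem \ref{thm_technical_ap} on each $N_3^\pm(X,\chi)$. Since $(a,m)=1$, no ramification is forced by the progression, so I take $\calS$ empty (no specifications beyond unramifiedness of the relevant primes encoded in the character) and sum the contributions from all local types $R_p$ at each $p \mid m$. For $\chi^6 \neq 1$ the zeta function is entire and contributes only to the error, so only $\chi$ with $\chi^6=1$ survive. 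Writing such a $\chi$ as $\chi=\psi\phi$ with $\psi$ primitive cubic (or trivial) and $\phi$ a (possibly imprimitive) quadratic character, I reduce to $\psi = \chi^{-2}$ as indicated in the text, so the relevant $L$-factor is $L(1/3,\chi^{-2})/L(5/3,\chi^{2})$.

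The main technical step is to evaluate, for each prime $p \mid m$, the local sum $\sum_{R_p}\phi_p(\Disc(R_p))\,K(R_p,\psi)$ over unramified $R_p/\Z_p$ (totally split, partially split, and inert), using the tables following \eqref{def_kp_twisted}. I must verify that this local sum vanishes unless the pair $(\phi_p,\psi_p)$ is either both trivial or together yields $\chi_p$ of exact order $6$. Inspecting the tables: if $\psi_p$ is trivial and $\phi_p$ is the nontrivial quadratic character, the weighted sum $\tfrac{1}{6}(1+p^{-1/3})^3 - \tfrac{1}{2}(1+p^{-1/3})(1+p^{-2/3}) + \tfrac{1}{3}(1+p^{-1})$ vanishes by direct algebra; conversely if $\psi_p$ is cubic and $\phi_p$ trivial, a similar cancellation eliminates that combination using $1+\chi_p(p)+\chi_p(p)^2=0$-type identities for the cubic twist. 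Only the two surviving cases, tabulated in \eqref{eqn_trivial_case} and \eqref{eqn_sextic_case}, contribute; these I copy directly from the tables, absorbing the normalization $K_{p,\chi}$ of \eqref{def_kp_twisted} and the global factors $K^\pm$. Dividing by $\phi(m) = m\prod_{p\mid m}(1-p^{-1})$ and matching with the $(1+p^{-1})$ denominators in \eqref{eqn_trivial_case}--\eqref{eqn_sextic_case} produces the $\frac{1}{m}\prod_{p\mid m}(1-p^{-2})^{-1}$ factor in front of \eqref{eqn_ma_prop}.

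For $3\mid m$, the analysis at $p=3$ is altered because a primitive cubic character mod $3$ does not exist; the smallest cubic modulus is $9$. Repeating the above cancellation argument at $p=3$ using the $p=3$ table on page pageref{tab:ap_3}, the nonzero cases are $\psi_3$ trivial with $\phi_3$ trivial (yielding \eqref{eqn_trivial_case} with $p=3$), and $\psi_3$ of order $3$ and conductor $9$ with $\phi_3$ trivial (yielding \eqref{eqn_cubic_case}); thus $\chi_3$ itself has exact order $3$, not $6$, and \eqref{eqn_sextic_case} is replaced by \eqref{eqn_cubic_case}, giving the second clause.

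For $4\mid m$ the argument at $p=2$ proceeds as in the discussion preceding the proposition: there are no nontrivial cubic characters mod any power of $2$, $\phi_4$ acts trivially on discriminants of unramified cubic $\Z_2$-rings, and the two primitive quadratic characters modulo $8$ only distinguish splitting types already separated by the discriminant mod $8$; no primitive quadratic characters exist to higher $2$-power moduli. Thus only the trivial and $\phi_4$ characters contribute at $2$, and their combination via $\overline{\chi}(a)$ yields the factor $2$ if $a\equiv 1\pmod 4$ and $0$ otherwise, completing the third clause. The main obstacle throughout is the bookkeeping verification that the local character sums at $p > 3$ truly vanish in the mixed cases; once this cancellation is in hand, the stated formula is a direct consequence of Theorem \ref{thm_technical_ap} and orthogonality.
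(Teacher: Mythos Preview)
Your proposal is correct and follows essentially the same route as the paper: start from the orthogonality relation and Theorem \ref{thm_technical_ap}, decompose $\chi=\psi\phi$, observe that the local sum $\sum_{R_p}\phi_p(\Disc(R_p))K(R_p,\psi)$ over unramified $R_p$ vanishes in the mixed cases, and read off \eqref{eqn_trivial_case}, \eqref{eqn_sextic_case}, \eqref{eqn_cubic_case} from the tables together with the special analysis at $p=2,3$. One small imprecision: in the case $\psi_p$ nontrivial and $\phi_p$ trivial you should use the second (Gauss-sum) table, where the cancellation is simply $\tfrac16-\tfrac12+\tfrac13=0$ rather than a ``$1+\chi_p(p)+\chi_p(p)^2=0$'' identity (indeed $\chi_p(p)=0$ here), and in the first cancellation the entries carry $\psi(p)$ factors---but your algebraic identity still holds with $u=\psi(p)p^{-1/3}$, so this does not affect the argument.
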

\begin{remark}
In some respects, our formula would be simpler if we summed over imprimitive characters modulo $m$.
However, we find it conceptually clearer to deal only with $L$-functions associated to 
primitive characters.
\end{remark}

\begin{remark} We used PARI/GP and Dokchitser's ComputeL \cite{dok} to compute a variety of values of
$K_1(m, a)$.
We observed that $K_1(m, a)$ behaves unpredictably with respect to factoring $m$.
As a striking example, there are more cubic field discriminants congruent to 3 than to 2 modulo 7 or 13,
but modulo $91 = 7 \cdot 13$ the pattern is reversed.

In fact the $K_1$ constants for the above progressions are all negative, so if one expects $K_1(m, a)$ to be multiplicative (it is not) then
perhaps this result is not surprising. However, we have $K_1(m, 2) < K_1(m, 3) < K_1(m, 4) < 0$ for $m = 7$ and $m = 13$,
but $K_1(91, 2)$ and $K_1(91, 4)$ are very nearly equal (and negative), and $K_1(91, 3)$ is much less than
either of these.

We further computed that $K_1(91, 5) > 0$, which shows that the secondary term can be {\itshape positive} when
restricted to arithmetic progressions.
\end{remark}

We now describe how to handle general arithmetic progressions. This is not difficult, but we do not have a particularly
elegant formulation of our results. Any exact statement would involve an enumeration of cases which is essentially
equivalent to our previous tables, so we will only give a sketch.

Consider an arithmetic progression
$ar \ (\textmod \ mr)$, where $(a, m) = 1$ but $r$ may or may not
be coprime to $m$. In this case, apart from the usual behavior at 2, $C_1(m, a)$ is equal to $\frac{1}{\phi(m)}$ times the proportion of cubic fields $K$
such that $(\Disc(K), rm) = r$. This proportion can be written as a product of local proportions over the primes dividing
$rm$, and each local proportion is determined by our previous tables. For example, if $p || r$ and $p || m$, this
local proportion is equal to
\begin{equation}
\frac{p^{-1}}{1 + p^{-1} + p^{-2}}.
\end{equation}
If we combine this with the factor of $\frac{1}{p - 1}$ coming from $\frac{1}{\phi(m)}$, we obtain a local factor of
$\frac{1}{p^2(1 - p^{-3})}$.

For the secondary term $K_1(m, a)$, \eqref{eqn_ma} still holds, except that each sum over $R_p$
is over those rings whose $p$-adic valuation is compatible with
the $p$-divisibility of $r$ and $m$. These can again be computed using our previous tables. 
 
To illustrate this, we continue our previous example. Suppose that $p || r$ and $p || m$ with
$p > 3$. Consider the
contribution of a sextic character $\chi$ to \eqref{eqn_ma}, and write $\chi = \prod_p \chi_p = \prod_p \psi_p \phi_p$
as before. If the quadratic part $\phi_p$ is nontrivial, then
$\sum_{R_p} \phi_p(\Disc(R_p)) K(R_p, \psi) = 0$ whether $\psi_p$ is trivial or not.

If $\psi_p$ and $\phi_p$ are both trivial, then 
\begin{equation}
\sum_{R_p} \phi_p(\Disc(R_p)) K(R_p, \psi) = 
\frac{(1 - \psi(p)^2 p^{-2/3})(1 + \psi(p) p^{-1/3})}{(1 - \psi(p)^2 p^{-5/3})(p + 1)}.
\end{equation} 
If $\phi_p$ is trivial and $\psi_p$ is nontrivial, then
\begin{equation}\label{eqn_nontriv}
\sum_{R_p} \phi_p(\Disc(R_p)) K(R_p, \psi) = 
\frac{\psi_p(4) \psi_{m/p}(p) p^{-4/3}}{1 + p^{-1}}.
\end{equation}

We summarize our results in the following proposition. The reader should beware that this result is
misleadingly simple,
as it obscures the distinction between $\psi$ and $\psi_p$, 
but we emphasize that we can
obtain results for other progressions in an exactly similar fashion.
\begin{proposition} For $p > 3$ and $(a, p) = 1$ we have
\begin{equation}
C_1(p^2, ap) = \frac{1}{p^2 (1 - p^{-3})},
\end{equation}
\begin{equation}\label{eqn_p2_ap}
K_1(p^2, ap) = \frac{1}{p^2 - 1}
\bigg( \frac{\zeta(1/3)}{\zeta(5/3)} \frac{(1 - p^{-2/3})(1 + p^{-1/3})}
{1 - p^{-5/3}} + \frac{1}{p^{1/3}} \sump_{\substack{\psi^3 = 1 \\ \psi \neq 1}}
\overline{\psi}(2a) \frac{L(1/3, \psi)}{L(5/3, \psi^2)} \bigg).
\end{equation}
\end{proposition}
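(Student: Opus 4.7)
The plan is to specialize Theorem \ref{thm_technical_ap} to the arithmetic progression $ap\pmod{p^2}$ with $(a,p)=1$, $p>3$, and derive the two constants by summing the contributions of characters $\chi\pmod p$ satisfying $\chi^6=1$. The starting point is the orthogonality identity at the effective modulus $m/(a,m)=p$: since the condition $\Disc(K)\equiv ap\pmod{p^2}$ forces $p\|\Disc(K)$, which by the Delone--Faddeev correspondence at $p$ is equivalent to $p$ being partially ramified in $K$, we incorporate the partial-ramification specification into $\calS_p$ and write
\begin{equation*}
N_3^\pm(X;p^2,ap) \;=\; \frac{1}{p-1}\sum_{\chi\pmod p}\overline{\chi}(a)\,N_3^\pm(X,\calS_p;p,\chi),
\end{equation*}
where $\calS_p$ is the union of the two partial-ramification subtypes (those coming from $x^2+u^2p$ with $u$ a QR and from $x^2+gu^2p$ with $g$ a fixed nonresidue). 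Theorem \ref{thm_technical_ap} reduces the main terms on the right to contributions from characters with $\chi^6=1$, all with error $O(p^{16/9}X^{7/9+\epsilon})$, which is absorbed into the claimed $O(p^{8/9}X^{7/9+\epsilon})$ after division by $p-1$.

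For $C_1(p^2,ap)$, only the trivial character contributes to the $X$-term through $\delta(\chi)C^\pm(\calS_p)$. From the table of normalized local densities at $s=1$ in Section \ref{subsec_local_specs}, the partially ramified density equals $\frac{1/p}{1+p^{-1}+p^{-2}}=\frac{p}{p^2+p+1}$, which after division by $p-1$ yields $\frac{p}{(p-1)(p^2+p+1)}=\frac{1}{p^2(1-p^{-3})}$, as claimed.

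For $K_1(p^2,ap)$, I decompose each sextic $\chi\pmod p$ as $\chi=\psi\phi$ with $\psi$ cubic or trivial and $\phi$ quadratic or trivial. The key cancellation is that when $\phi$ is nontrivial, the values of $\phi(\Disc(R_p)/p)$ on the two partial-ramification subtypes are $+1$ and $-1$ (since the two subtype discriminants differ by a nonresidue), while the cubic (or trivial) density $K(\calS_j,\psi)$ is the same for both subtypes by the $\times\tfrac12$ symmetry in the tables. Hence quadratic and sextic $\chi$ contribute zero, and only the trivial character and the primitive cubic characters $\psi\pmod p$ remain. For trivial $\chi$ I read off from the first (unramified-conductor) table the value $K(\calS_p,1)=\frac{(1+p^{-1/3})^2/p}{K_p}=\frac{(1+p^{-1/3})(1-p^{-2/3})}{p(1-p^{-5/3})(1+p^{-1})}$, and this combines with $\frac{L(1/3,1)}{L(5/3,1)}=\frac{\zeta(1/3)}{\zeta(5/3)}$. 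For primitive cubic $\psi\pmod p$, I use the second (ramified-conductor) table: with $m=p$ and $\psi_{m/p}$ trivial, the density is $\psi(4)p^{-4/3}$ and the normalizer is $K_{p,\chi}=1+p^{-1}$, giving $K(\calS_p,\psi)=\frac{\psi(4)p^{-1/3}}{p+1}$. Since $\psi^3=1$ one has $\psi(4)=\psi(2)^2=\overline{\psi}(2)$, so $\overline{\psi}(a)\psi(4)=\overline{\psi}(2a)$.

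Assembling the two contributions and factoring $\frac{1}{p-1}\cdot\frac{1}{p+1}=\frac{1}{p^2-1}$ out of each term gives exactly the formula in the proposition. The main obstacle is keeping track of which table (the $p\nmid\textnormal{cond}(\chi)$ one or the $p\mid\textnormal{cond}(\chi)$ one) applies to each character, and verifying the cancellation across the two partially ramified subtypes for characters with nontrivial quadratic part; everything else is bookkeeping with Euler factors.
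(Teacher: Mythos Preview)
Your proof is correct and follows essentially the same approach as the paper: reduce via orthogonality at the effective modulus $p$ with the partial-ramification specification built in, observe that characters with nontrivial quadratic part cancel across the two partially ramified subtypes, and then read off the contributions of the trivial character and the nontrivial cubic characters from the density tables. The only quibble is your remark about the error term: summing $p-1$ error terms each of size $O(p^{16/9}X^{7/9+\epsilon})$ and then dividing by $p-1$ leaves the error at $O(p^{16/9}X^{7/9+\epsilon})$, not smaller; but the proposition concerns only the constants $C_1$ and $K_1$, so this is harmless.
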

We also obtain in the same manner (again for $p > 3$ and $(a, p) = 1$)
\begin{equation}
C_1(p^3, ap^2) = \frac{1 + \phi_p(-3a)}{p^3 (1 - p^{-3})}, \ \ \ 
K_1(p^3, ap^2) = \frac{(1 + \phi_p(-3a))(1 - p^{-2/3})}
{p^3 (1 - p^{-2}) (1 - p^{-5/3})}
\frac{\zeta(1/3)}{\zeta(5/3)},
\end{equation}
and if we further increase the powers of $p$ in the moduli of any of the previous four equations, then we introduce
no new sextic characters and hence we simply divide each term by the appropriate power of $p$.
Moreover, for $p > 3$ there are no cubic fields with discriminants divisible by $p^3$, and hence we have
completely determined the distribution of cubic field discriminants modulo powers of $p$.

For $p = 2$, there are no cubic characters, and cubic fields equidistribute in subprogressions of the arithmetic
progression corresponding to each local specification at 2.
For $p = 3$ the analysis is rather lengthy, and discriminants of cubic fields can
have 3-adic valuation as large as 5. In the interest of space we will not work out the details here;
the idea is that arithmetic progressions $a 3^k \ (\textmod \ 3^{k + 1})$ correlate with local specifications
at 3, progressions $a 3^k \ (\textmod \ 3^{k + 2})$ exhibit a bias due to the primitive cubic characters modulo 9,
and progressions $a 3^k \ (\textmod \ 3^{k + j})$ do not exhibit additional bias for $j \geq 3$.

We now illustrate our results with numerical data on
the distribution of field discriminants in arithmetic progressions modulo 7 and powers of 7.
The ``expected'' counts are the two main terms of \eqref{eqn_simple_ap},
and the actual counts were determined from Belabas' tables \cite{bel_fast}.

\begin{align*}
C_1(7,a)&=0.00993261\dots,
\\
K_1(7,a)&=
\begin{cases}
-0.0101147\dots&a=5,\\
-0.0149070\dots&a=1,\\
-0.0159463\dots&a=4,\\
\end{cases}
&K_1(7,a)&=
\begin{cases}
-0.0255309\dots&a=3,\\
-0.0265702\dots&a=6,\\
-0.0313625\dots&a=2.\\
\end{cases}
\end{align*}
\[
\begin{array}{c|c|c}
a&N_3^+(2\cdot10^6,7,a)& \textnormal{Expected} \\ 
\hline
1 & 17229 &17209\\
2 & 14327 &14277\\
3 & 15323 &15316\\
4 & 17027 &17024\\
5 & 18058 &18063\\
6 & 15150 &15131\\
\hline
\end{array}
\quad
\begin{array}{c|c|c}
a&N_3^-(10^6,7,a)&  \textnormal{Expected} \\
\hline
1 & 27281 &27216\\
2 & 24343 &24366\\
3 & 25389 &25376\\
4 & 27035 &27036\\
5 & 28051 &28046\\
6 & 25227 &25196\\
\hline
\end{array}
\]
\begin{align*}
C_1(49,7a)&=0.00141894\dots, &
K_1(49,7a)&=
\begin{cases}
-0.00159849\dots & a=3,4,\\
-0.00382342\dots & a=1,6,\\
-0.00520755\dots & a=2,5.\\
\end{cases}
\end{align*}
\[
\begin{array}{c|c|c}
a&N_3^+(2\cdot10^6,49,a)& \textnormal{Expected} \\
\hline
7 & 2155 &2157\\
14 & 1920 &1910\\
21 & 2562 &2553\\
28 & 2519 &2553\\
35 & 1921 &1910\\
42 & 2159 &2157\\
\hline
\end{array}
\quad
\begin{array}{c|c|c}
a&N_3^-(10^6,49,a)&  \textnormal{Expected} \\
\hline
7 & 3555 &3595\\
14 & 3362 &3355\\
21 & 3967 &3980\\
28 & 3980 &3980\\
35 & 3345 &3355\\
42 & 3590 &3595\\
\hline
\end{array}
\]

\begin{align*}
C_1(343,49a)&=
\begin{cases}
0.000405412\dots & a=1,2,4,\\
0&a=3,5,6,\\\end{cases}
&K_1(343,49a)&=
\begin{cases}
-0.000664801\dots&a=1,2,4,\\
0&a=3,5,6.\\\end{cases}
\end{align*}
\[
\begin{array}{c|c|c}
a&N_3^+(2\cdot10^6,343,a)&  \textnormal{Expected} \\
\hline
49 	& 697 	&692\\
98 	& 690 	&692\\
147 	& 0 	&0\\
196 	& 707 	&692\\
245 	& 0 	&0\\
294 	& 0 	&0\\
\hline
\end{array}
\quad
\begin{array}{c|c|c}
a&N_3^-(10^6,343,a)& \textnormal{Expected} \\
\hline
49 	& 1117 	&1101\\
98 	& 1092 	&1101\\
147 	& 0 	&0\\
196 	& 1083 	&1101\\
245 	& 0 	&0\\
294 	& 0 	&0\\
\hline
\end{array}
\]

\subsection{3-torsion in quadratic fields}\label{subsec_torsion_ap}
We come now to the analogue of Theorem \ref{thm_technical_ap} for 3-torsion in quadratic fields, and a discussion of 3-torsion
in arithmetic progressions. The results are quite similar, so we will keep our discussion brief. Write
\begin{equation}
M_3^{\pm}(X, \calS; r, \chi) := \sump_{\substack{0 < \pm \Disc(K) \leq X \\ K \in \calS}} \chi \bigg(\frac{\Disc(K)}{r} \bigg),
\end{equation}
as in \eqref{eqn_def_n3}, but with the restriction to fields $K$ which are nowhere totally ramified. (Recall that these
are in bijection with pairs of nontrivial 3-torsion elements in $\Cl(\Q(\sqrt{\Disc(K)}))$.
We adopt all of the notation of Section \ref{subsec_ap}, and make all of the same assumptions on $\calS$, $r$, and $\chi$.
We will prove the following theorem:

\begin{theorem}\label{thm_technical_torsion_ap}
Whenever $\chi^6 \neq 1$, we have
\begin{equation}\label{eqn_technical_ap_trivial_torsion}
N_3^{\pm}(X, \calS; r, \chi) = 
O(X^{18/23 + \epsilon} N^{20/23} ).
\end{equation}
When $\chi^6 = 1$, we may reduce to the case $\chi^3 =1 $ as described in Section \ref{subsec_ap}, in which case we have
\begin{multline}\label{eqn_technical_ap_torsion}
M_3^{\pm}(X, \calS; r, \chi) = 
\frac{\delta(\chi) C'^{\pm}(\calS)}{2 \pi^2} X + \\
K'^{\pm}(\calS, \chi)
\frac{4 L(1/3, \chi)}{5 \Gamma(2/3)^3}
\prod_{p \nmid \textnormal{cond}(\chi)} \Bigg( 1 - \frac{ \chi(p)^{-1} p^{1/3} + 1}{p (p + 1)} \Bigg) X^{5/6} + O(X^{18/23 + \epsilon} N^{20/23} ),
\end{multline}
where $\delta(\chi)$ is $1$ if $\chi$ is trivial and $0$ otherwise, and $C'^{\pm}(\calS)$ and $K'^{\pm}(\calS, \chi)$ are described below.
\end{theorem}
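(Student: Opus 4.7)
The plan is to combine three arguments already in the paper: the 3-torsion reformulation of Section \ref{subsec_torsion} (via Hasse's bijection between nontrivial 3-torsion and cubic fields nowhere totally ramified), the local specifications of Section \ref{subsec_local_torsion}, and the character twists of Section \ref{subsec_ap}. First I would define an orbital $L$-function $L^{\pm}_{\calS, q}(s, r, \chi)$ exactly as in \eqref{eqn_def_shintani_orbital}, but with $\Phi_q(x)$ the characteristic function of forms which are nonmaximal or totally ramified at some prime dividing $q$ (i.e.\ using $V_p$ in place of $U_p$, as in Section \ref{subsec_torsion}). Analytic continuation, the functional equation, and explicit residue formulas at $s = 1$ and $s = 5/6$ follow from \cite{TT} (building on Datskovsky--Wright). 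When $\chi$ is trivial, the residue at $s = 1$ yields the main term of order $X$; when $\chi^3 = 1$, the residue at $s = 5/6$ produces the secondary term. Computing this residue as a product of local factors, with the $V_p$ restriction deleting totally ramified contributions from each table in Section \ref{subsec_ap}, gives the Euler product $\prod_{p \nmid \textnormal{cond}(\chi)} \bigl(1 - (\chi(p)^{-1} p^{1/3} + 1)/(p(p+1))\bigr)$ appearing in \eqref{eqn_technical_ap_torsion}, while $K'^{\pm}(\calS, \chi)$ is the product of local factors at the ramified primes and at primes dividing the conductor of $\chi$.

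Next I would run the contour integration of Section \ref{sec_proof} essentially verbatim: smoothed Perron with factor $(X - n)^{\rho}$, contour shift past the poles at $s = 1$ and $s = 5/6$, and reduction via the functional equation to partial sums of a dual $L$-function carrying an additional factor $\widehat{\Phi}_N(q^{-2} x)$ (as in \eqref{def_shintani_dual_AP}). Bounding this trivially by $1$ multiplies every partial-sum estimate from Proposition \ref{prop_high_middle_bound2} by $N^4$, just as in Section \ref{subsec_local_torsion}.

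The main obstacle, and the source of the worse exponent $18/23$ versus $7/9$, is that Proposition \ref{prop_high_middle_bound2} is weaker than Proposition \ref{prop_high_middle_bound} in the range $\mu_n < q^{-2}$, reflecting the larger contribution of totally ramified orbits to the dual Gauss sum (Lemma \ref{lem_gauss_sum2}). Following Section \ref{subsec_torsion}, I would split the inclusion-exclusion sum at $q = X^{1/6}$; combined with the $N^4$ inflation from the local specifications, the total error takes the shape
\begin{equation}
Q^{15/8 + \epsilon} X^{3/8} N^{5/2} + Q^{7/2 + \epsilon} N^4 + X^{1 + \epsilon}/Q,
\end{equation}
exactly as in \eqref{eqn_choose_Q2a}. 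The balanced choice $Q = X^{5/23} N^{-20/23}$ equates the first and last terms and produces the claimed bound $O(X^{18/23 + \epsilon} N^{20/23})$. When $\chi^6 \neq 1$ the $L$-function is entire, so only this error survives, proving \eqref{eqn_technical_ap_trivial_torsion}.

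Finally, I would translate the $L$-function estimate back into an estimate for $M_3^{\pm}(X, \calS; r, \chi)$ using Proposition \ref{prop_shintani_reduction} together with Hasse's bijection, orthogonality of Dirichlet characters making the twist compatible. The contribution of reducible maximal cubic rings is nonzero only when $\chi$ is trivial, and is handled exactly as in Section \ref{subsec_local_specs} via the analog of \eqref{lem_count_reduc}; this produces the clean $\delta(\chi) C'^{\pm}(\calS)/(2 \pi^2) \cdot X$ main term, with all other reducible contributions absorbed into the error.
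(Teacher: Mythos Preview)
Your proposal is correct and follows essentially the same route as the paper, which simply states that the proof is a straightforward combination of the proofs of Theorems \ref{thm_rc_torsion} and \ref{thm_technical_ap}. You have correctly identified all the ingredients: the $V_p$-based orbital $L$-function, the trivial bound on $\widehat{\Phi}_N$ contributing a factor of $N^4$, the split of the $q$-sum leading to the error shape \eqref{eqn_choose_Q2a}, and the optimized choice $Q = X^{5/23} N^{-20/23}$.
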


The proof is a straightforward combination of the proofs of Theorems  \ref{thm_rc_torsion} and \ref{thm_technical_ap}. The 
constants $C'^{\pm}(\calS)$ and $K'^{\pm}(\calS, \chi)$ are computed in the same way. We normalize $C'^{\pm}(\calS)$
by dividing by $1 + p^{-1}$, and we normalize $K'^{\pm}(\calS, \chi)$ by dividing by
\begin{equation}\label{eqn_kp_new}
K_{p, \chi} := 1 + \frac{\chi(p)}{p^{1/3}} +
\frac{\chi(p)^2}{p^{2/3}} + \frac{2}{p} + \frac{2 \chi(p)}{p^{4/3}} + \frac{\chi(p)^2}{p^{5/3}}
\end{equation}
for each $p$ dividing $N$ for which $\chi_p$ is trivial (compare with \eqref{def_cpkp2}), and
\begin{equation}
K_{p, \chi} := 1 + p^{-1}
\end{equation}
for each prime $p$ for which $\chi_p$ is nontrivial.

One can now compute as many examples as before, and one finds similar biases in arithmetic progressions to the same moduli.
For brevity's sake we will confine ourselves to a discussion of $M_3^{\pm}(X; m, a)$ (defined in the obvious manner) when
$(m, 6a) =1 $.
We have, similarly to \eqref{eqn_simple_ap},
\begin{equation}\label{eqn_simple_ap2}
M_3^\pm(X; m,a)=
C'_1(m,a)
\frac{C'^{\pm}}{2 \pi^2} X 
+ K'_1(m, a)
\frac{4 K^{\pm}}{5 \Gamma(2/3)^3}
X^{5/6}
\end{equation}
for explicit constants $C'_1(m, a)$ and $K'_1(m, a)$. If $(a, 6m) = 1$, then
\begin{equation}
C'_1(m, a) = \frac{1}{m} \prod_{p | m} \frac{1}{(1 - p^{-2})}.
\end{equation}
To evaluate $K'_1(m, a)$, we again decompose any nontrivial $\chi$ into a primitive cubic character $\psi$ and a quadratic character $\phi$,
and we have
\begin{equation}\label{eqn_ma2}
K'_1(m, a) = \frac{1}{\phi(m)} \sum_{\chi^6 = 1} 
\overline{\chi}(a) L(1/3, \psi)
\prod_{p \nmid \textnormal{cond}(\chi)} \Bigg( 1 - \frac{ \psi(p)^{-1} p^{1/3} + 1}{p (p + 1)} \Bigg)
\prod_{p | m} \sum_{R_p} \phi_p(\Disc(R_p)) K(R_p, \psi),
\end{equation}
where the rightmost sum is over all unramified cubic rings over $\Z_p$. This sum is the same as in the problem of counting cubic
fields, except for the new normalization factors.
When $(m, 6a) = 1$, this implies (as before) that the outer sum is 
over characters $\chi$ for which each $\chi_p$ has exact order 1 or 6, and that
$\psi = \chi^{-2}$ for each such $\chi$.

We readily deduce Theorem \ref{thm_torsion_progressions}, and we could deduce other variations as well.


\begin{thebibliography}{99}

\bibitem{bel_fast} K. Belabas,
\emph{A fast algorithm to compute cubic fields},
Math. Comp. \textbf{66} (1997), no. 219, 1213--1237; accompanying software available at
\url{http://www.math.u-bordeaux1.fr/~belabas/research/software/cubic-1.0.tgz}.

\bibitem{bel} K. Belabas,
\emph{On quadratic fields with large $3$-rank},
Math. Comp. \textbf{73} (2004), no. 248, 2061--2074.

\bibitem{BBP} K. Belabas, M. Bhargava, and C. Pomerance,
\emph{Error estimates in the Davenport-Heilbronn theorems}, Duke Math. J. \textbf{153} (2010), 
no. 1, 173--210.

\bibitem{BF} K. Belabas and E. Fouvry, 
\emph{Sur le 3-rang des corps quadratiques de discriminant premier ou pseudo-premier}, 
Duke Math. J. \textbf{98} (1999), pp. 217-268.

\bibitem{B} M. Bhargava,
\emph{Simple proofs of the Davenport-Heilbronn theorems}, preprint.

\bibitem{B_quartic} M. Bhargava, 
\emph{The density of discriminants of quartic rings and fields},
Ann. of Math. (2) \textbf{162} (2005), no. 2, 1031--1063. 

\bibitem{BST} M. Bhargava, A. Shankar, and J. Tsimerman,
\emph{On the Davenport-Heilbronn theorem and second order terms}, preprint.

\bibitem{CN} K. Chandrasekharan and R. Narasimhan,
\emph{Functional equations with multiple gamma factors and the average order of arithmetical
functions}, Ann. Math (2) \textbf{76} (1962), 93--136.

\bibitem{C} H. Cohn,
\emph{The density of abelian cubic fields}, 
Proc. Amer. Math. Soc. \textbf{5} (1954), 476-477.

\bibitem{DW2} B. Datskovsky and D. Wright,
\emph{The adelic zeta function associated to the space of binary cubic forms. II. Local theory},
J. Reine Angew. Math. \textbf{367}  (1986), 27--75.

\bibitem{DW3} B. Datskovsky and D. Wright,
\emph{Density of discriminants of cubic extensions},
J. Reine Angew. Math. \textbf{386}  (1988), 116--138. 

\bibitem{D} H. Davenport,
\emph{Multiplicative number theory},
Springer-Verlag, New York, 2000.

\bibitem{DH} H. Davenport and H. Heilbronn,
\emph{On the density of discriminants of cubic fields. II}, 
Proc. Roy. Soc. London Ser. A \textbf{322}  (1971), no. 1551, 405--420. 

\bibitem{DF} B. N. Delone and D. K. Faddeev,
\emph{The theory of irrationalities of the third degree (English translation},
AMS, Providence, 1964.

\bibitem{dok} T. Dokchitser, 
\emph{ComputeL}, 
PARI/GP package available at 
\url{http://www.dpmms.cam.ac.uk/~td278/computel/index.html}. 

\bibitem{EV} J. Ellenberg and A. Venkatesh,
\emph{Reflection principles and bounds for class group torsion},
Int. Math. Res. Not. no. 1 (2007), Art. ID rnm002.

\bibitem{GGS} W. T. Gan, B. Gross, and G. Savin,
\emph{Fourier coefficients of modular forms on $G_2$},
Duke Math. J. \textbf{115} (2002), 105--169.

\bibitem{H} B. Hough,
\emph{Equidistribution of Heegner points associated to the 3-part of the class group},
preprint.

\bibitem{JRW} M. J. Jacobson Jr., S. Ramachandran, and H. C. Williams,
\emph{Numerical results on class groups of imaginary quadratic fields}, ANTS VII, Berlin, 87--101,
Lecture Notes in Comput. Sci. \textbf{4076}, Springer, Berlin, 2006.

\bibitem{james} K. James,
\emph{$L$-series with nonzero central critical value},
J. Amer. Math. Soc. \textbf{11} (1998), no. 3, 635--641.

\bibitem{JR} J. Jones and D. Roberts,
\emph{A database of local fields},
J. Symbolic Comput. \textbf{41} (2006), no. 1, 80--97; accompanying database available online at
\url{http://math.asu.edu/~jj/localfields/}.

\bibitem{L} E. Landau,
\emph{\"Uber die Anzahl der gitterpunkte in gewissen Bereichen},
G\"ott. Nachr. (1912), 687--771.

\bibitem{M} A. Morra,
\emph{Comptage asymptotique et algorithmique d'extensions cubiques relatives} (text in English),
thesis, Universit\'e Bordeaux I, 2009. Available online at
\url{http://perso.univ-rennes1.fr/anna.morra/these.pdf}.

\bibitem{N} J. Nakagawa, 
\emph{On the relations among the class numbers of binary cubic forms},
Invent. Math. \textbf{134} (1998), no. 1, 101--138. 

\bibitem{NH} J. Nakagawa and K. Horie,
\emph{Elliptic curves with no rational points},
Proc. Amer. Math. Soc. \textbf{104} (1988), no. 1, 20--24.

\bibitem{O} Y. Ohno, 
\emph{A conjecture on coincidence among the zeta functions associated with the space of binary cubic forms},
Amer. J. Math. \textbf{119} (1997), no. 5, 1083--1094. 
 
\bibitem{OT} Y. Ohno and T. Taniguchi,
\emph{Relations among Dirichlet series whose coefficients are class numbers of binary cubic forms II},
preprint.

\bibitem{OTW} Y. Ohno, T. Taniguchi, and S. Wakatsuki,
\emph{Relations among Dirichlet series whose coefficients are class numbers of binary cubic forms}, 
Amer. J. Math. \textbf{131}-6 (2009), 1525--1541.

\bibitem{pari} 
PARI/GP, version {\tt 2.3.4}, Bordeaux, 2008, available from \url{http://pari.math.u-bordeaux.fr/}.

\bibitem{R} D. Roberts,
\emph{Density of cubic field discriminants},
Math. Comp. \textbf{70}  (2001),  no. 236, 1699--1705.

\bibitem{sato} F. Sato,
\emph{On functional equations of zeta distributions},
Adv. Studies in Pure Math. \textbf{15} (1989), 465--508.

\bibitem{SK} M. Sato and T. Kimura,
\emph{A classification of irreducible prehomogeneous vector spaces and their relative invariants}.
Nagoya Math. J. \textbf{65} (1977), 1--155. 

\bibitem{SS} M. Sato and T. Shintani,
\emph{On zeta functions associated with prehomogeneous vector spaces},
Ann. of Math. (2) \textbf{100}  (1974), 131--170. 

\bibitem{S} T. Shintani,
\emph{On Dirichlet series whose coefficients are class numbers of integral binary cubic forms},
J. Math. Soc. Japan  \textbf{24} (1972), 132--188. 

\bibitem{shin_reducible} T. Shintani,
\emph{On zeta-functions associated with the vector space of quadratic forms},
J. Fac. Sci. Univ. Tokyo Sect. I A Math. \textbf{22} (1975), 25--65.

\bibitem{sound} K. Soundararajan,
\emph{Divisibility of class numbers of imaginary quadratic fields},
J. London Math. Soc. (2) \textbf{61} (2000), 681--690.

\bibitem{sage} W. A. Stein et al.,
\emph{Sage Mathematics Software (Version 4.5.3),}
   The Sage Development Team, 2010, \url{http://www.sagemath.org}.
   
\bibitem{tani_simple} T. Taniguchi,
\emph{On the zeta functions of prehomogeneous vector spaces for a pair of simple algebras}, 
Ann. Inst. Fourier. \textbf{57}, no. 4 (2007), 1331--1358.

\bibitem{TT}T. Taniguchi and F. Thorne,
\emph{Orbital $L$-functions for the space of binary cubic forms}, in preparation.

\bibitem{ten} G. Tenenbaum,
\emph{Introduction to analytic and probabilistic number theory},
Cambridge University Press, Cambridge, 1995.



\bibitem{vatsal} V. Vatsal,
\emph{Canonical periods and congruence formulae},
Duke Math. J. \textbf{98} (1999), 397--419.

\bibitem{wong} S. Wong,
\emph{Elliptic curves and class number divisibility},
Int. Math. Res. Not. \textbf{12} (1999), 661-672.

\bibitem{DW1} D. Wright,
\emph{The adelic zeta function associated to the space of binary cubic forms. I. Global theory},
Math. Ann.  \textbf{270}  (1985),  no. 4, 503--534.

\bibitem{WY} D. Wright and A. Yukie,
\emph{Prehomogeneous vector spaces and field extensions},
Invent. Math. \textbf{110} (1992), no. 2, 283--314.

\bibitem{Y} A. Yukie,
\emph{Shintani zeta functions},
London Mathematical Society Lecture Note Series, 183, Cambridge University Press, Cambridge, 1993.
\end{thebibliography}
\end{document}